\theoremstyle{plain}
\newtheorem{theorem}{Theorem}[section]
\newtheorem{lemma}[theorem]{Lemma}
\newtheorem{proposition}{Proposition}
\newtheorem{corollary}{Corollary} 
\theoremstyle{remark}
\newtheorem{definition}[theorem]{Definition}
\newtheorem{remark}{Remark} 
\numberwithin{equation}{section}
\DeclareMathOperator*{\argmin}{arg\,min}
\DeclareMathOperator*{\argmax}{arg\,max}
\def\E{{\mathbf E}}
\def\P{{\mathbf P}}
\def\Pa{{\mathcal P}}
\def\D{{\mathcal D}}
\def\O{{\mathcal O}}
\def\R{{\mathbb R}}
\def\S{{\cal S}}
\def\L{{\cal L}}
\def\A{{\small\mathcal{A}}}
\def\C{{\small\mathcal{C}}}
\def\I{\mbox{I}}
\def\red{\color{black}}
\begin{document}

\begin{frontmatter}
	\title{Consistency of the oblique decision tree and its boosting and random forest}
	\runtitle{Consistency of Oblique Decision Tree}

\begin{aug}
	\author[AC]{\fnms{Haoran}~\snm{Zhan}\ead[label=e1]{haoran.zhan@u.nus.edu}}
	\author[B]{\fnms{Yu}~\snm{Liu}\ead[label=e2]{liuy8stat@sicnu.edu.cn}}
	\author[AC]{\fnms{Yingcun}~\snm{Xia}\ead[label=e3]{staxyc@nus.edu.sg}}


      \address[AC]{Department of Statistics and Data Science,
	National University of Singapore}
    
	\address[B]{School of Mathematics Science, Sichuan Normal University \\ \printead[presep={\ }]{e1,e2,e3}}


\end{aug}

\begin{abstract}
		Classification and Regression Tree (CART), Random Forest (RF) and Gradient Boosting Tree (GBT) are probably the most popular set of statistical learning methods. However, their statistical consistency can only be proved under very restrictive assumptions on the underlying regression function. As an extension to standard CART, the oblique decision tree (ODT), which uses linear combinations of predictors as partitioning variables, has received much attention. ODT tends to perform numerically better than CART and requires fewer partitions. In this paper, we show that ODT is consistent for very general regression functions as long as they are $L^2$ integrable. Then, we prove the consistency of the ODT-based random forest (ODRF), whether fully grown or not. Finally, we propose an ensemble of GBT for regression by borrowing the technique of orthogonal matching pursuit and study its consistency under very mild conditions on the tree structure. After refining existing computer packages according to the established theory, extensive experiments on real data sets show that both our ensemble boosting trees and ODRF have noticeable overall improvements over RF and other forests.

	\end{abstract}
	
	\begin{keyword}
		\kwd{CART}
		\kwd{consistency}
		\kwd{feature bagging}
        \kwd{gradient boosting tree}
		\kwd{nonparametric regression}
		\kwd{oblique decision tree}
		\kwd{random forest}
	\end{keyword}
	
\end{frontmatter}


\section{Introduction}

The classification and regression tree \cite[CART]{breiman1984classification} is one of the most popular machine learning algorithms. It has apparent simplicity, visibility and interpretability and is therefore widely used in data mining and analysis. The random forest \cite[RF]{breiman2001random}, as an ensemble method of CART, is arguably a very efficient method for tabular data and is therefore one of the most popular methods in machine learning. Meanwhile, another improvement of CART, called the gradient boosting tree \cite[GBT]{friedman2001greedy}, is also an efficient method in machine learning.

However, \cite{breiman1984classification} has noticed since the time when CART was first proposed that the use of marginal variables as splitting variables could cause problems both in theory and in its numerical performance in classification and prediction.  As a remedy for this disadvantage of CART,  \cite{breiman1984classification} suggested using linear combinations of the predictors as the splitting variables. Later, the method became known as the oblique decision tree \cite[ODT]{Heath93inductionof} and has received much attention. ODT was expected to perform better than CART, requiring fewer splits and therefore smaller trees than CART (see \cite{kim2001classification}), while the linear combination often preserves interpretability in data analysis.  There are also various oblique random forests (ODRF) in the literature, including Forest-RC of \cite{breiman2001random}, Random Rotation Random Forest (RR-RF) of \cite{blaser2016random}, Canonical Correlation Forests (CCF) of \cite{rainforth2015canonical},  Random Projection Forests (RPFs) of \cite{lee2015fast} and Sparse Projection Oblique Random Forests of \cite{tomita2020sparse}. For these forests, the difference is in how to find the linear combination and can be regarded as different implementations of ODT or their random forests. 

\subsection{A review of studies on statistical consistency of decision trees}

Despite the widespread popularity of CART and RF, their statistical consistency has long been a significant theoretical challenge for statisticians and is far from fully resolved. This challenge can be attributed to two primary issues. The first is an inherent limitation of the methods themselves, namely the reliance on marginal variables as partitioning criteria when constructing the trees in CART or RF. The second issue stems from the mathematical complexity involved in analyzing the relationships between different layers of the tree structure.

Early works on the consistency of CART or RF were mainly focused on the simplified version of CART or RF.  The most celebrated theoretical result is that of \cite{Breiman2001}, which offers an upper bound on the generalization error of forests in terms of correlation and strength of the individual trees. This was followed by \cite{breiman2004consistency} which focuses on a stylized version of the original algorithm.  \cite{lin2006random} established lower bounds for non-adaptive forests (that is, independent of the training set) via the connection between random forests and a particular
class of nearest-neighbor predictors; see also \cite{biau2010layered}. In the past twenty years, various theoretical developments have been made, for example, \cite{biau2008consistency}, \cite{ishwaran2010consistency}, \cite{biau2012analysis}, \cite{genuer2012variance}, and \cite{zhu2015reinforcement}, in analyzing the consistency of simplified models. Other efforts to bridge the gap between theory and practice include \cite{denil2013consistency}, which establishes the first consistency result for online random forests, as well as \cite{wager2014asymptotic} and \cite{mentch2014ensemble}, which investigate the asymptotic sampling distribution of forests.

The work of \cite{scornet2015consistency} was a milestone and proved that CART-based RF is consistent in the $L^2$ sense if the unknown regression function is additive of the marginal variables. Following their proofs, \cite{syrgkanis2020estimation}, \cite{klusowski2021universal} and \cite{chi2020asymptotic} showed that RF is also consistent in high-dimensional setting under different modelling assumptions. In particular, \cite{klusowski2021universal} found that there is a relationship between CART and greedy algorithms and also gave a consistency rate $(\ln n)^{-1}$, where $n$ denotes the sample size. \cite{chi2020asymptotic} improved this consistency rate under an additional assumption called sufficient impurity decrease (SID), which includes the additive model as a special case in a high-dimensional setting. However, all the above consistency results for CART or RF are based on very strong restrictions on $m(x)$, such as the additive model or the SID condition.

On the other hand, consistency results for gradient boosting trees (GBT) are relatively scarce in the literature, with most research focusing primarily on improving numerical performance. The early work of \cite{buhlmann2002consistency} and \cite{zhang2005boosting} demonstrated the consistency of $L^2$ boosting trees, but without incorporating the CART scheme. More recently, \cite{zhou2022decision} introduced the CART rule into the boosting process, but their consistency result was limited to a varying coefficient model. In this paper, our objective is to investigate the consistency of GBT for more general regression functions under the CART partitioning scheme. Furthermore, we employ orthogonal matching pursuit and bagging to improve the original boosting tree.

\subsection{Our contributions}

Note that the existing consistency results for decision trees or their corresponding random forests are either proved under very strong assumptions on the unknown regression functions or are only for simplified versions of decision trees that are not practically used.
In view of this, the consistency results in this paper are novel. Our contributions are summarized as follows.
\begin{itemize}
	\item We establish consistency results for ODT as well as ODRF, which is based on either fully grown trees or partially grown trees, for general regression functions as long as they are $L^2$ integrable. The results include those of CART or RF as their special cases or corollaries.

	\item We introduce two methods of feature bagging to improve prediction performance and establish the consistency of the methods.



	\item 
    We also refine the existing packages for ODRF according to the established theory.  Extensive empirical studies have shown that both our ODRF and ensemble boosting trees tend to have superior performance over other methods, including standard RF and other ODRF implementations, which was not clearly demonstrated before using the existing packages; see, for example, \cite{kim2001classification}.

      \item We establish an explicit relationship between neural networks and the gradient boosting tree in \cite{friedman2001greedy}. Then  this relationship is used to prove the consistency of the improved gradient boosting tree.      To our knowledge, the consistency rate is the fastest among all the tree-based regressions. Importantly, our results do not require any additional technical assumptions on the tree structure.
\end{itemize}

During our study of ODT, \cite{cattaneo2022convergence} also presented some consistency results about ODT.  We summarize two main differences between our results and theirs as follows. Firstly, the oracle inequalities in \cite{cattaneo2022convergence} were intentionally designed for high-dimensional regression and hold for functions whose Fourier transformations have finite first moments. Our study focuses on the consistency of  ODT for fixed dimension regression and shows that ODT is consistent for more general regression functions, as long as they are continuous. 
Secondly, we also introduce boosting trees and various random forests based on ODT and study their asymptotic properties in different cases, but these important parts are missing in \cite{cattaneo2022convergence}. Especially, the proof of the consistency of ODRF with fully grown trees is  different from that for a single tree in \cite{klusowski2021universal}. 





\subsection{Organization of this paper}
The rest of this paper is organized as follows. In Section 2, we introduce  notations used in the proofs and an algorithm to describe how to construct  ODT.  In Section 3, we first describe the idea of our proofs and then give our main results for the consistency of ODT. Section 4 presents consistency results for ODRFs based on trees that are either fully grown or partially grown.  In Section 5, we propose an ensemble of boosting trees for regression and analyze its statistical consistency. 
In Section 6, we explain the implementation of our two algorithms based on ODT and compare their numerical performance with RF and other decision forests.

\section{Preliminaries}\label{Notation and Algorithm}

\subsection{Notations}\label{sec:notations}

Suppose that the random variable $Y\in\R$ is the response and the random vector $X\in [0,1]^p$ is the predictor with the distribution $\mu(x),x\in \R^p$. Our interest is to estimate regression function $m(x):=\E(Y|X=x), x\in[0,1]^p$. Denote   by $\D_n=\{(X_i,Y_i)\}_{i=1}^n$ the i.i.d. sample of $(X,Y)$ and $\mathbb{Y}=(Y_1,\ldots,Y_n)^\top\in\R^n$  the response vector.  Let $ \Theta^p = \{ \theta: \theta \in \R^p \ \mbox{and } ||\theta||_2 = 1\}$ be the centered unit sphere in $\R^p$, where $\|\cdot\|_2$ is the $\ell^2$ norm in $\R^p$ space. Denote by $A$ a node of ODT, which  is a subset of $[0,1]^p$, and its two daughters are written as $A^+_{\theta,s}=\{x\in A: \theta^Tx\le s\}$ and $A^-_{\theta,s}=\{x\in A: \theta^Tx> s\}$ satisfying $A=A^+_{\theta,s}\cup A^-_{\theta,s}$. Note that either $A^+_{\theta,s}$ or $A^-_{\theta,s}$ can be empty.  For any node $A$, let $N(A):=Card(\{X_i\in A\})$ be the number of data points in $A$ and $\bar{Y}_A:=\frac{1}{N(A)}\sum_{X_i\in A}{Y_i}$ be the sample mean for data in $A$.
Define $\langle\mathbb{Y}-\bar{Y}_A, \mathbb{Y}+f\rangle_A:=\frac{1}{N(A)}\sum_{X_i\in A}{[(Y_i-\bar{Y}_A)\cdot (Y_i+f(X_i))]}$ and  $\|\mathbb{Y}-f\|_A^2:=\frac{1}{N(A)}\sum_{X_i\in A}{(Y_i-f(X_i))^2}$ for any $f: [0,1]^p\to\mathbb{R}$. If $A=[0,1]^p$, the notation of empirical norm $\|\cdot\|_n$ is used to present $\|\cdot\|_{[0,1]^p}$.
Under above notations, we can introduce  the impurity gain in the regression problem \citep{breiman1984classification}:
\begin{equation}\label{impuritygainreg}
	\Delta_A(\theta,s)= \|\mathbb{Y}-\bar{Y}_{A}\|_{A,n}^2-\left( P(A^+_{\theta,s})\|\mathbb{Y}-\bar{Y}_{A^+_{\theta,s}}\|_{A^+_{\theta,s},n}^2+P(A^-_{\theta,s})\|\mathbb{Y}-\bar{Y}_{A^-_{\theta,s}}\|_{A^-_{\theta,s},n}^2\right),
\end{equation}
where $P(A^+_{\theta,s})=N(A^+_{\theta,s})/N(A)$ and $P(A^-_{\theta,s})=N(A^-_{\theta,s})/N(A)$. Meanwhile, we use $t_n$ to denote the number of terminal nodes (leaves) of an ODT, where $1\le t_n\le n$. 

Next, we introduce the total variation of an univariate function $g(v), v\in [0,1]$: $$\|g\|_{TV}:= \sup_{m\ge 1}\sup_{0\le v_0<\cdots<v_{n}\leq 1}{\sum_{j=0}^{m-1}|f(v_{j+1})-f(v_j)|}.$$ 
One of its useful properties is that $\|g\|_{TV}=\int_0^1{|g'(x)|dx}$ if $g$ is continuously differentiable on $[0,1]$. For any linear combination of ridge functions $g(x)=\sum_{j=1}^J{g_j(\theta_j^T x)}, x\in [0,1]^p$,  we  define its total variation by $\|g\|_{TV}:=\sum_{j=1}^J{\|g_{j}\|_{TV}}$. 

In addition, we use $\mathbb{Z}_+$ to denote the class of positive integers. Finally, $c$ is used to denote a positive constant and $c(a_1,\ldots, a_\aleph)$ is also a positive function  depending  on parameters $a_1,\ldots,a_\aleph$ only. Note that both $c$ and  $c(a_1,\ldots, a_\aleph)$ are allowed to change from line to line.

\begin{figure}[ht]
	\includegraphics[width=0.95\textwidth]{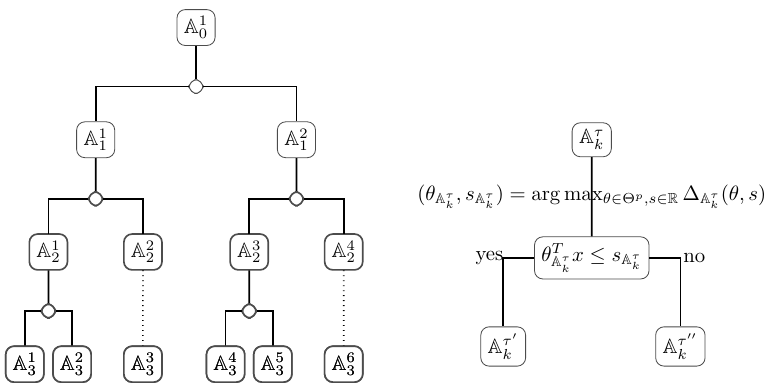}
	\caption{This figure shows an example of $T_{\mathcal{D}_n,6,3}$, which has three layers $\L=3$ and $6$ leaves. To be specific, we have root node $\mathbb{A}_0^1$ in the layer 0, nodes $\mathbb{A}_1^1$ and $\mathbb{A}_1^2$ in the layer 1,  nodes $\mathbb{A}_2^1, \mathbb{A}_2^2, \mathbb{A}_2^3, \mathbb{A}_2^4$ in the layer $\ell = 2$ and leaves $\mathbb{A}_3^1, \mathbb{A}_3^2, \mathbb{A}_3^3, \mathbb{A}_3^4, \mathbb{A}_3^5, \mathbb{A}_3^6, \mathbb{A}_3^7$ in the layer 3. Note that in this case $\mathbb{A}_2^2$ only contains one data point and can not be divided in further steps, which implies that  $\mathbb{A}_2^2=\mathbb{A}_3^3$. It is also noteworthy that no matter how many data points in  $\mathbb{A}_2^4$ we have $\mathbb{A}_2^4=\mathbb{A}_3^6$ because $t_n$ is preset to be 6. Finally, we have estimators $m_{n,2}(x)=\sum_{j=1}^4{\mathbb{I}(x\in \mathbb{A}_2^j)\cdot \bar{Y}_{\mathbb{A}_2^j}}$ and $m_{n,3}(x)=\sum_{j=1}^6{\mathbb{I}(x\in \mathbb{A}_3^j)\cdot \bar{Y}_{\mathbb{A}^j_3}}$ given data $\D_n$.}
	\label{fig:decisiontree}
\end{figure}

\subsection{ODT algorithm}
According to \cite{breiman1984classification}, the best splitting criteria of  node $A$ that contains at least two data points is to choose $(\hat{\theta}_A,\hat{s}_A)$ by  maximizing $\Delta_A(\theta,s)$ over $\mathbb{R}\times\Theta^p$.
Based on the best splitting, the construction of ODT in regression problem is shown in Algorithm \ref{Algorithm.ODTtreereg}.

For a regression tree, we call the root, i.e. original data, as layer 0, and the last layer that contains only leaves, $ \L $. Thus, the layer index $ \ell  $ satisfies $ 0 \le \ell \le \L $.
Let $k_\ell$ be the number of leaves at layer $\ell$, whose corresponding nodes are denoted by $\{\mathbb{A}_\ell^j\}_{j=1}^{k_\ell}$. The estimator of $m(x)$ by using nodes at layer $\ell$ is defined by
$$
m_{n,t_n,\ell}(x)=\sum_{j=1}^{k_\ell}{\mathbb{I}(x\in\mathbb{A}_\ell^j)\cdot \bar{Y}_{\mathbb{A}_\ell^j}}
$$
for each $ 0 \le \ell \le \L $. To simplify the notation, $m_{n,t_n,\ell}(x)$ is sometimes abbreviated to  $m_{n,\ell}(x)$ if there is no confusion.

With the above notations, a regression tree with $t_n$ leaves is written as a triple  $ T_{\D_n,t_n, \L} $ and the final estimator is $ m_{n,\L}(x)$ satisfying $k_\L=t_n$. An example of  regression tree is shown in Figure \ref{fig:decisiontree} with $ \L = 3 $ and $ t_n = 6$. 

In the  consistency proof, we also need  a truncated tree at each layer $ \ell: 0 \le \ell \le \L$, which is denoted by  $ T_{\D_n,t_n, \ell} $. Sometimes, we  use $ T_{\D_n,\ell}$ to denote  $ T_{\D_n,t_n, \ell} $. For example, tree $ T_{\D_n,t_n,2} $ in Figure \ref{fig:decisiontree} has leaves $\{\mathbb{A}_2^1, \mathbb{A}_2^2, \mathbb{A}_2^3, \mathbb{A}_2^4\}$. Note that leaves and nodes are relative, and a node on a fully grown tree can become a leaf on a truncated tree.


\begin{algorithm}[h]\label{Algorithm.ODTtreereg}
	\caption{Oblique Decision Tree in Regression}
	\KwIn{Data $\D_n$  and pre-specified number of leaves $t_n$}
	Set $\mathcal{P}_0=\{[0,1]^p\}$, the root node of the tree\;

	For all $1\le \ell\le n$, set $\mathcal{P}_\ell=\varnothing$\;
	
	Set  $\text{layer}=0$ and $n_{\text{nodes}}=1$\;
	
	\While{$n_{\text{nodes}} \in  \{1, 2, ..., t_n\}$}{
		
		\eIf{$\mathcal{P}_{\text{layer}}=\varnothing$}
		{
			$\text{layer}=\text{layer}+1$\;
		}{
			Let $A$ be the first node in $\mathcal{P}_{\text{layer}}$\;
			
			\eIf{{\color{black}$A$ contains only one data point}}
			{
				$\mathcal{P}_{\text{layer}}\leftarrow \mathcal{P}_{\text{layer}}- \{A\}$\;
				$\mathcal{P}_{\text{layer}+1}\leftarrow\mathcal{P}_{\text{layer}+1}\cup\{A\}$\;
			}{
				$(\hat{\theta}_A,\hat{s}_A)=\argmax_{\theta\in\Theta^p,s\in\mathbb{R}}{\Delta_A(\theta,s)}$ defined in \eqref{impuritygainreg}\;
				
				Partition node $A$ into two daughter nodes:  
    
                   \ \ \ \  \ \ \ \  \ \ \ \ $A^+_{\hat{\theta}_A,\hat{s}_A}=\{x\in A: \hat{\theta}_A^T\cdot x\le \hat{s}_A\}$ and  $A^-_{\hat{\theta}_A,\hat{s}_A}=\{x\in A: \hat{\theta}_A^T\cdot x> \hat{s}_A\}$\;
				
				$\mathcal{P}_{\text{layer}}\leftarrow \mathcal{P}_{\text{layer}}- \{A\}$\;
				$\mathcal{P}_{\text{layer}+1}\leftarrow\mathcal{P}_{\text{layer}+1}\cup\{A^+_{\hat{\theta}_A,\hat{s}_A}\}\cup\{A^-_{\hat{\theta}_A,\hat{s}_A}\}$\;
				$n_{\text{nodes}}=n_{\text{nodes}}+1$\;
			}
		}
		
	}
	\KwOut{Let $\{\mathbb{A}_\L^j\}_{j=1}^{t_n}$ be the set of leaves of above generated tree $T_{\mathcal{D}_n,t_n}$. Estimate $m(x)$ by $m_{n,\L}(x)=\sum_{j=1}^{t_n}{\mathbb{I}(x\in \mathbb{A}_\L^j)\cdot \bar{Y}_{\mathbb{A}_\L^j}}, x\in[0,1]^p$.}
\end{algorithm}

\section{Consistency of ODT}

Let us briefly describe our idea for the proof of the consistency of ODT, i.e. the consistency of $m_{n,\L}(x)$ as an estimator of $m(x)=\E(Y|X=x), x\in[0,1]^p$. In fact, three pieces of facts are used in the proof. The first fact is the well-known universal approximation theorem; see, for example, \cite{cybenko1989approximation}. Under some mild conditions, function $ m(x) $ can be approximated by a sequence of ridge functions with additive structure, i.e.
\begin{equation}\label{ridgefunctionm}
	m(x) =  \sum_{j=1}^\infty  g_j(\theta_j^T x),
\end{equation}
or for any $\varepsilon>0$, there exist $ j_\varepsilon\in\mathbb{Z}_+$ and
$
m_{\varepsilon}(x) = \sum_{j=1}^{j_\varepsilon}  g_j(\theta_j^T x)
$
so that
$$
\sup_{x\in[0,1]^p}{|m(x)-m_{\varepsilon}(x)|}\leq \varepsilon.
$$
The second fact is that the directions searched in the ODT algorithm can indeed play the role of those in the universal approximation \eqref{ridgefunctionm} or those directions in $    m_{\varepsilon}(x) $.  The third fact is the consistency results of CART or RF for the additive model proved by \cite{klusowski2021universal}. Note that    $
m_{\varepsilon}(x) $ indeed has an additive structure if $\theta_1, \theta_2, ...  $ are known.
Putting these facts together, the consistency result is given below.

\begin{theorem}[Consistency of ODT before pruning]\label{consistencyodtreg}
	Assume 
	$
	\E(e^{c\cdot Y^2})<\infty
	$
	for some  $c>0$,  and that $m(X)$ is $L^2$ integrable. If $t_n\to\infty$ and $t_n  =o\left(\frac{n}{\ln^4{n}}\right)$, we have
	$$
	\E\left(\int{|m_{n,\L}(x)-m(x)|^2d\mu(x)}\right)\to 0,\ \text{as}\ n\to\infty.
	$$
	
\end{theorem}

\begin{remark}\label{remark11}
This theorem shows that ODT is mean-square consistent under very mild conditions. By comparing Theorem \ref{consistencyodtreg} with the consistency of CART given in \cite{scornet2015consistency}, our result has several advantages. First, CART is shown to be consistent only when $m(x)$ has an additive structure of predictors while Theorem \ref{consistencyodtreg} guarantees the consistency of ODT for any smooth functions. Furthermore,  there is no restriction on the distribution of $X$ and we only require $Y$ to be a sub-Gaussian random variable in this theorem.
\end{remark}

We need additional notation in the proof of Theorem \ref{consistencyodtreg}. Let $\beta_n\asymp \ln{n}$ and 
\begin{equation}\label{tr}
    \hat{m}_{n,\L}(x)=\max\{\min\{m_{n,\L}(x),\beta_n\},-\beta_n\}.
\end{equation}
Then,
$\hat{m}_{n,\L}(x) $ is a truncated version of $ m_{n,\L}(x) $.  More generally, let $h:[0,1]^p\to\mathbb{R}$ be a function which is constant on each  $\mathbb{A}_{\L}^j, j=1,\ldots, {t_n}$ defined in Algorithm \ref{Algorithm.ODTtreereg}.
Let $\mathcal{H}_{t_n}$ be the collection of such function $ h $ and ${\mathcal{H}_{t_n}^{\beta_n}}:=\{\max\{\min\{h,\beta_n\},-\beta_n\}: h\in\mathcal{H}_{t_n}\}$. In  order to bound the generalization error,  we need both the growth function and the cover number,  which are explained below.

\begin{definition}[\cite{blumer1989learnability}]
	Let $\mathcal{F}$ be a Boolean function class in which each $f:\mathcal{Z}\to\{-1,1\}$ is binary valued. The growth function of $\mathcal{F}$ is defined by
	$$
	\Pi_\mathcal{F}(m)=\max_{z_1,\ldots,z_m\in \mathcal{Z}} Card(\{(f(z_1),\ldots,f(z_m)):f\in\mathcal{F}\})
	$$
	for each positive integer $m\in\mathbb{Z}_+$.
\end{definition}

\begin{definition}[\cite{gyorfi2006distribution}] \label{coveringnumber}
	Let $z_1,\ldots, z_n \in [0,1]^p$ and $z_1^n=\{z_1,\ldots, z_n\}$. Let $\mathcal{H}$ be a class of functions $h: [0,1]^p\to\mathbb{R}$. An $L_q$ $\varepsilon$-cover of $\mathcal{H}$ on $z_1^n$ is a finite set of functions $h_1,\ldots, h_N: \mathbb{R}^p
	\to \mathbb{R}$ satisfying
	$$
	\min_{1\leq j\leq N}{\left( \frac{1}{n}\sum_{i=1}^n{|h(z_i)-h_j(z_i)|^q}\right)^\frac{1}{q}}<\varepsilon,\ \ \forall h\in \mathcal{H}.
	$$
	Then, the $L_q$ $\varepsilon$-cover number of $\mathcal{H}$ on $z_1^n$, denoted by $\mathcal{N}_q(\varepsilon,\mathcal{H},z_1^n)$, is the minimal size of an $L_q$ $\varepsilon$-cover of $\mathcal{H}$ on $z_1^n$. If there exists no finite $L_q$ $\varepsilon$-cover of $\mathcal{H}$, then the above cover number is defined as $\mathcal{N}_q(\varepsilon,\mathcal{H},z_1^n)=\infty$.
\end{definition}

The proof of Theorem \ref{consistencyodtreg}  relies on Lemma \ref{agirov2009estimation} below, which will be frequently used during the proofs for random forests and boosting trees.

\begin{lemma}[\cite{bagirov2009estimation}]\label{agirov2009estimation}
	Assume
	$
	\E(e^{c\cdot Y^2})<\infty
	$
	for some  $c>0$. Then, the truncated estimator $\hat{m}_{n,\L}(x)$ satisfies
	\begin{equation}\label{Mainlemmaformulabeforeprun}
		\begin{aligned}
			\E_{\mathcal{D}_{n}} \int|\hat{m}_{n,\L}(x)-m(x)|^2d\mu(x)\leq & 2 \E_{\mathcal{D}_{n}}\left( \frac{1}{n}\sum_{i=1}^{n}|m_{n,\L}(X_i)-Y_i|^2-\frac{1}{n}\sum_{i=1}^{n}|m(X_i)-Y_i|^2\right)\\
			&+\frac{c\ln^2 n}{n}\cdot\sup_{z_1^n}\ln\left(\mathcal{N}_1(1/(80n\beta_n),\mathcal{H}_{t_n}^{\beta_n}, z_1^n)\right),
		\end{aligned}
	\end{equation}
	 where $\mathcal{N}_1$ is the cover number and $\mu$ is the distribution of $X$.
\end{lemma}

Next, we  use Lemma \ref{Carteqqw} whose proof idea was originally shown in  \cite{klusowski2021universal} to bound the first term on the RHS of \eqref{Mainlemmaformulabeforeprun}.  

\begin{lemma}\label{Carteqqw}
	Let $R(A):=\|\mathbb{Y}-\bar{Y}_A\|_A^2-\|\mathbb{Y}-g\|_A^2$ for any $g\in \mathcal{U}$, where $\mathcal{U}$ is a set of linear combinations of ridge functions:
	$$
	\mathcal{U}= \{\sum_{j=1}^J{g_j(\theta_j^Tx)}:\theta_j\in\Theta^p, g_j\in TV(\mathbb{R}), J\in\mathbb{Z}_+\}
	$$
	and $TV(\mathbb{R})$ consists of functions defined on $\mathbb{R}$ with bounded total variation. Let $A$ be an internal node  of tree $T_{\mathcal{D}_n,t_n}$
	which contains at least two data points. If $R(A)\ge 0$,  then $\Delta_A(\hat{\theta}_A,\hat{s}_A)$ satisfies
	$$
	\Delta_A(\hat{\theta}_A,\hat{s}_A)\ge \frac{R^2(A)}{\|g\|_{TV}^2}.
	$$
\end{lemma}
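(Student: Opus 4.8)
The plan is to reduce the lemma to a single-split (matching-pursuit type) estimate. Throughout I write $\tilde{Y}_i := Y_i - \bar{Y}_A$ for the centred responses on $A$ and, for functions $u,v$ on $A$, $\langle u,v\rangle_A := \frac{1}{N(A)}\sum_{X_i\in A} u(X_i)v(X_i)$, so that $\|\cdot\|_A^2 = \langle\cdot,\cdot\rangle_A$. Since $(\hat{\theta}_A,\hat{s}_A)$ maximizes $\Delta_A$, it suffices to exhibit a single admissible split whose gain is at least $R^2(A)/\|g\|_{TV}^2$.

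First I would rewrite the impurity gain purely in terms of a correlation with one splitting indicator. By the elementary variance (ANOVA) decomposition of $\|\bm{Y}-\bar{Y}_A\|_A^2$ across the two daughters, $\Delta_A(\theta,s)=P(A^+_{\theta,s})P(A^-_{\theta,s})(\bar{Y}_{A^+_{\theta,s}}-\bar{Y}_{A^-_{\theta,s}})^2$. Writing $\mathbb{I}_{\theta,s}(x):=\mathbb{I}(\theta^Tx\le s)$ for the left-daughter indicator, a direct computation gives $\langle\tilde{Y},\mathbb{I}_{\theta,s}\rangle_A = P(A^+_{\theta,s})P(A^-_{\theta,s})(\bar{Y}_{A^+_{\theta,s}}-\bar{Y}_{A^-_{\theta,s}})$, hence $\Delta_A(\theta,s)=\langle\tilde{Y},\mathbb{I}_{\theta,s}\rangle_A^2/[P(A^+_{\theta,s})P(A^-_{\theta,s})]$. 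Because $P(A^+)P(A^-)\le 1/4$, this yields the key one-sided bound $\langle\tilde{Y},\mathbb{I}_{\theta,s}\rangle_A^2\le \tfrac14\Delta_A(\theta,s)\le\tfrac14\Delta_A(\hat{\theta}_A,\hat{s}_A)$ for every admissible $(\theta,s)$; since replacing $(\theta,s)$ by $(-\theta,-s)$ turns the left-daughter indicator into the right-daughter one, every half-space indicator $\mathbb{I}(\theta^Tx\ge s)$ obeys the same bound.

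Second I would expand $R(A)$. Completing the square gives $R(A)=2\langle\tilde{Y},g\rangle_A-\|g-\bar{g}_A\|_A^2-(\bar{g}_A-\bar{Y}_A)^2$, where $\bar{g}_A$ is the empirical average of $g$ over $A$; as the last two terms are nonnegative and $R(A)\ge 0$ by hypothesis, $\langle\tilde{Y},g\rangle_A\ge\tfrac12 R(A)\ge 0$. Third, the crux, I would exploit the ridge structure of $g=\sum_{j=1}^J g_j(\theta_j^Tx)$ with the total-variation representation of each BV component: writing $g_j$ as the distribution function of its signed derivative measure $\mu_j$ (so $|\mu_j|(\mathbb{R})=\|g_j\|_{TV}$) gives $g_j(\theta_j^Tx)=c_j+\int\mathbb{I}(\theta_j^Tx\ge s)\,d\mu_j(s)$. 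Since $\tilde{Y}$ has mean zero on $A$ the constants drop, and by Fubini $\langle\tilde{Y},g\rangle_A=\sum_{j=1}^J\int\langle\tilde{Y},\mathbb{I}(\theta_j^T\cdot\ge s)\rangle_A\,d\mu_j(s)$. Bounding each integrand by $\tfrac12\sqrt{\Delta_A(\hat{\theta}_A,\hat{s}_A)}$ via the first step and integrating against $|\mu_j|$ gives $\langle\tilde{Y},g\rangle_A\le\tfrac12\sqrt{\Delta_A(\hat{\theta}_A,\hat{s}_A)}\sum_{j=1}^J\|g_j\|_{TV}=\tfrac12\sqrt{\Delta_A(\hat{\theta}_A,\hat{s}_A)}\,\|g\|_{TV}$. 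Combining with the second step, $R(A)\le 2\langle\tilde{Y},g\rangle_A\le\sqrt{\Delta_A(\hat{\theta}_A,\hat{s}_A)}\,\|g\|_{TV}$, and squaring (legitimate since $R(A)\ge 0$) yields $\Delta_A(\hat{\theta}_A,\hat{s}_A)\ge R^2(A)/\|g\|_{TV}^2$.

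I expect the only delicate points to lie in the third step: making the BV integral representation rigorous, justifying the Fubini interchange, and confirming that every half-space indicator appearing there is genuinely a candidate split so that the first-step bound applies uniformly in $s$. Because all quantities are empirical averages over the finitely many $X_i\in A$, the map $s\mapsto\langle\tilde{Y},\mathbb{I}(\theta_j^T\cdot\ge s)\rangle_A$ is a bounded step function, so measurability and integrability against $\mu_j$ are automatic; one must only watch the boundary convention ($\le$ versus $<$), which is harmless since it alters the indicator at finitely many thresholds and the supremum defining $\Delta_A$ absorbs it.
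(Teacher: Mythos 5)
Your proof is correct and follows essentially the same route as the paper, which simply reduces the lemma to the identity $\Delta_A(\theta,s)=|\langle \bm{Y}-\bar{Y}_A,\hat{Y}_A\rangle_A|^2$ for arbitrary $\theta\in\Theta^p$ and then defers to the matching-pursuit argument of Lemma 7.1 in \cite{klusowski2021universal}; your three steps (correlation form of the gain, the bound $\langle \bm{Y}-\bar{Y}_A,g\rangle_A\ge R(A)/2$, and the BV-measure expansion of each ridge component integrated against the uniform split bound) are exactly that argument written out in full. The only cosmetic difference is that you work with raw half-space indicators and the bound $P(A^+)P(A^-)\le 1/4$ instead of the normalized stump with $\sqrt{P(A^+)P(A^-)}$ weights, which yields the same constant, and your use of the derivative measure $\mu_j$ handles general bounded-variation $g_j$ slightly more carefully than the paper's differentiable-case notation.
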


\begin{proof}
  The proof is similar to  Lemma \ref{regpart2dd} that is a random version of this lemma.
\end{proof}

In order to bound the cover number in \eqref{Mainlemmaformulabeforeprun}, we introduce the VC dimension below, which is usually a powerful tool to deal with generalization errors in statistical learning. After bounding the VC dimension of ODT class, Lemma \ref{CoveringnumbervsVCd} is employed to find an upper bound for $\mathcal{N}_1(\cdot,\cdot,\cdot)$.

\begin{definition}\label{defvc}
    Let $\mathcal{H}$ be a class of functions $h: [0,1]^p\to \R$. Define its Boolean class by $\mathcal{F}_{\mathcal{H}}:=\left\{\mathbb{I}(f(x)<y)-\mathbb{I}(f(x)\ge y),(x,y)\in [0,1]^p\times \R: f\in \mathcal{H}\right\},$ where $\mathbb{I}(\cdot)$ is the indicator function. The VC dimension of $\mathcal{H}$ is the largest $m\in\mathbb{Z}_+$ satisfying $2^m=\Pi_{\mathcal{F}_{\mathcal{H}}}(m)$ and denoted by $VC(\mathcal{H})$.
\end{definition} 

\begin{lemma}[\cite{gyorfi2006distribution}] \label{CoveringnumbervsVCd}
	Let $\mathcal{H}$ be a class of functions $h: [0,1]^p\to [0,B]$ with finite VC dimension $VC(\mathcal{H})\geq 2$. For any $B/4>\varepsilon>0$, the  cover number in Definition \ref{coveringnumber} satisfies
	$$
	\mathcal{N}_1(\varepsilon,\mathcal{H},z_1^n)\leq 3\left( \frac{2eB}{\varepsilon}\ln\left(\frac{3eB}{\varepsilon} \right) \right)^{VC(\mathcal{H})}
	$$
	for all $z_1^n=\{z_1,\ldots,z_n\}, z_i\in \mathbb{R}^p.$
\end{lemma}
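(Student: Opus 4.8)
The plan is to reduce this statement about a real-valued function class to a purely combinatorial statement about a Boolean class, and then invoke a Sauer--Shelah-type bound on the growth function $\Pi_\mathcal{F}$ introduced above. Concretely, I would attach to each $h\in\mathcal{H}$ its subgraph indicator $g_h(z,t)=\mathbb{I}(t\le h(z))$ on $\mathbb{R}^p\times[0,B]$, and set $\mathcal{G}=\{g_h:h\in\mathcal{H}\}$. Under the standard convention, $VC(\mathcal{H})$ is defined to be the VC dimension of precisely this subgraph class, so $\mathcal{G}$ is a Boolean class with the same finite VC dimension $V:=VC(\mathcal{H})\ge 2$.

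The first real step is a \emph{layer-cake reduction}. For any $h,h'\in\mathcal{H}$ and any $z$, since both take values in $[0,B]$,
$$
|h(z)-h'(z)|=\int_0^B \left| \mathbb{I}(t\le h(z))-\mathbb{I}(t\le h'(z)) \right|\,dt,
$$
so averaging over $z_1^n$ shows that the empirical $L_1$ distance between $h$ and $h'$ on $z_1^n$ equals $B$ times the $L_1$ distance between $g_h$ and $g_{h'}$ under the product of the empirical measure on $\{z_i\}$ and the uniform measure on $[0,B]$. Hence any $(\epsilon/B)$-cover of $\mathcal{G}$ in this $L_1$ sense induces an $\epsilon$-cover of $\mathcal{H}$ on $z_1^n$, and it suffices to control the $L_1$ covering number of the Boolean class $\mathcal{G}$ at scale $\delta:=\epsilon/B$.

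For the Boolean class I would pass through a maximal $\delta$-packing: a family in $\mathcal{G}$ that is pairwise more than $\delta$ apart in this $L_1$ metric, whose cardinality $N$ dominates the covering number. The key device is a probabilistic subsampling argument. Drawing $m$ points i.i.d.\ from the product measure, any fixed pair of $\delta$-separated functions agrees on all $m$ of them with probability at most $(1-\delta)^m\le e^{-\delta m}$; a union bound over the $\binom{N}{2}$ pairs shows that as soon as $N^2 e^{-\delta m}<2$ there is a realization of the $m$ points on which all $N$ functions induce distinct patterns, whence $N\le\Pi_\mathcal{G}(m)$. Feeding in the Sauer--Shelah bound $\Pi_\mathcal{G}(m)\le(em/V)^V$ and solving the resulting self-referential inequality with $m$ of order $\delta^{-1}\ln(1/\delta)$ produces a bound of the form $\big(\tfrac{2e}{\delta}\ln\tfrac{3e}{\delta}\big)^{V}$ up to the leading constant; substituting $\delta=\epsilon/B$ recovers the stated $3\big(\tfrac{2eB}{\epsilon}\ln\tfrac{3eB}{\epsilon}\big)^{V}$, with the restriction $\epsilon<B/4$ guaranteeing that the logarithm and the constants behave.

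I expect the main obstacle to be exactly this last packing-to-growth-function step: converting a large $\delta$-separated family into a contradiction with $\Pi_\mathcal{G}(m)\le(em/V)^V$ requires a careful choice of the subsample size $m$ and a tight union bound, and it is here that the logarithmic factor $\ln(3eB/\epsilon)$ is born and the leading constant $3$ must be tracked. By contrast, the subgraph reduction and the application of Sauer--Shelah are routine once the correct Boolean class is identified.
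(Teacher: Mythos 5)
The paper offers no proof of this lemma at all --- it is imported directly from \cite{gyorfi2002distribution} (Theorem 9.4 there) --- and your outline reproduces exactly the standard argument behind that cited result: the subgraph-class reduction with the layer-cake identity $|h(z)-h'(z)|=\int_0^B|\mathbb{I}(t\le h(z))-\mathbb{I}(t\le h'(z))|\,dt$, covering bounded by packing, the random-subsampling union bound forcing a $\delta$-separated family to induce distinct patterns on $m$ sample points, and Sauer--Shelah with $m\asymp\delta^{-1}\ln(1/\delta)$. Your proposal is therefore correct and takes essentially the same route as the source the paper invokes; the one step you defer --- tracking the constants $3$, $2e$, $3e$ through the self-referential inequality $N\le(em/V)^V$ --- is precisely the bookkeeping carried out in \cite{gyorfi2002distribution}.
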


\noindent \textbf{Proof of Theorem \ref{consistencyodtreg}.}
   The  proof outline is as follows. We first show that the truncated estimator $\hat{m}_{n,\L}(x)$ defined in \eqref{tr} is mean square consistent by bounding the two terms on the right hand side (RHS) of Lemma \ref{agirov2009estimation}. Those corresponding details are given in Part I and Part II in the following analysis respectively. Then, we establish the relationship between $\hat{m}_{n,\L}(x)$ and $m_{n,\L}(x)$ in Part III and show the untruncated estimator $m_{n,\L}(x)$ is also consistent.

   \
	
	\textbf{Part I:} We bound the first term on the RHS of \eqref{Mainlemmaformulabeforeprun} in this part. The following function class will be useful in our analysis:
	$$
	    Ridge_{J}=\left\{\sum_{j=1}^{J} {c_j\cdot \sigma(\theta_j^Tx+d_j)}: \theta_j\in\Theta^p, c_j,d_j\in\mathbb{R}, \forall j\ge 1\right\},
	$$
	where $\sigma(v)=e^v/(1+e^v)$ with  $ v\in\mathbb{R}$. 
	
	Recall the definition of $T_{\mathcal{D}_n,\ell}$, a truncated tree, and $m_{n,\ell}(x)$ is its corresponding estimator by taking average of data in each terminal leaf of $T_{\mathcal{D}_n,\ell}$, namely
	$$
	m_{n,\ell}(x)=\sum_{j=1}^{k_\L}{\mathbb{I}(x\in \mathbb{A}_{\ell}^j)\cdot \bar{Y}_{\mathbb{A}_{\ell}^{j}} }.
	$$
	Let $\L_0:= \lfloor\log_2{t_n}\rfloor\le \L$. Then, we know $T_{\D_n,\ell}$ is fully grown for each $0\leq \ell\le \L_0$, i.e. $T_{\D_n,\ell}$ is generated recursively by splitting all leaves of the previous tree $T_{\D_n,\ell-1}$ except for those leaves containing only one data point.

For any given $g\in\mathcal{G}_{J}$ and  $t_n>1$, we aim to prove 
	\begin{equation}\label{regpart1}
		\|\mathbb{Y}-m_{n,\L_0}(X)\|_n^2- \|\mathbb{Y}-g(X)\|_n^2\le \frac{\|g\|^2_{TV}}{\lfloor\log_2{t_n}\rfloor+4},
	\end{equation}
	 where $\|g\|_{TV}$ is the total variation of $g$ shown in Section \ref{sec:notations}. Define the  error of $\ell$-th layer by $R_{\D_n,\ell}: =\|\mathbb{Y}-m_{n,\ell}(X)\|_n^2-\|\mathbb{Y}-g(X)\|_n^2$, where $0\le \ell\le \L_0$.  Without loss of generality, we can  assume $R_{\D_n,\L_0-1}\ge 0$ since  \eqref{regpart1} holds obviously if $R_{\D_n,\L_0-1}< 0$ (note that $R_{\D_n,\L_0}\le R_{\D_n,\L_0-1}$). Similarly, define $R(A):=\|\mathbb{Y}-m_{t_n,\ell}(X)\|_A^2-\|\mathbb{Y}-g(X)\|_A^2$ for each $A\in \mathcal{O}_{\ell,1}:=\{\mathbb{A}_\ell^j\}_{j=1}^{k_\ell}$ that  contains at least one data point. Then, for any $0\le \ell\le \L_0$ we have $$R_{\D_n,\ell}=\sum_{A\in\mathcal{O}_{\ell,1}}{w(A)R(A)},$$
	where $w(A)=Card(A)/n$ is the proportion of data within $A$. Note that
	\begin{align}
		R_{\D_n,\L_0}= &  R_{\D_n,\L_0-1}-\sum_{A\in\mathcal{O}_{\L_0-1,2}}{w(A)\Delta_A(\hat{\theta}_A,\hat{s}_A)} \label{dhasdgasdj}\\
		\leq & R_{\D_n,\L_0-1}-\sum_{A\in\mathcal{O}_{\L_0-1,2}, R(A)> 0}{w(A)\Delta_A(\hat{\theta}_A,\hat{s}_A)} \nonumber\\
		\le & R_{\D_n,\L_0-1}-\frac{1}{\|g\|_{TV}^2}\sum_{A\in\mathcal{O}_{\L_0-1,2}, R(A)> 0}{w(A)R^2(A)},\label{ygdai}
	\end{align}
	where $\mathcal{O}_{\L_0-1,2}\subseteq \mathcal{O}_{\L_0-1,1}$ is a collection of nodes which must contain at least two data points, and \eqref{dhasdgasdj} follows from the definition of impurity gain, and \eqref{ygdai} follows from  Lemma \ref{Carteqqw}. Decompose  $R_{\D_n,\L_0-1}$ into two parts:
	\begin{align*}
		R_{\D_n,\L_0-1}^+ := &\sum_{A\in\mathcal{O}_{\L_0-1,1} R(A)> 0}{w(A)R(A)},\\
		\ R_{\D_n,\L_0-1}^-:=&\sum_{A\in\mathcal{O}_{\L_0-1,1}, R(A)\le 0}{w(A)R(A)}
	\end{align*}
	satisfying $R_{\D_n,\L_0-1}=R_{\D_n,\L_0-1}^++R_{\D_n,\L_0-1}^-$. By Jensen's inequality and  $R(A)\le 0$ for any leaf $A$ of $T_{\D_n,\L_0-1}$ which contains  one data point only, we have
	\begin{equation}\label{dgiasydf}
		\sum_{A\in\mathcal{O}_{\L_0-1,2}, R(A)> 0}{w(A)R^2(A)}\ge  \left(\sum_{A\in\mathcal{O}_{\L_0-1,2}, R(A)> 0}{w(A)R(A)}\right)^2
		=(R_{\D_n,\L_0-1}^+)^2.
	\end{equation}
		
  Similar to the proof of Theorem 4.2 in \cite{klusowski2021universal}, the combination of \eqref{ygdai} and \eqref{dgiasydf} implies that
	\begin{align}
		R_{\D_n,\L_0}&\le  R_{\D_n,\L_0-1}- \frac{1}{\|g\|_{TV}^2}(R_{\D_n,\L_0-1}^+)^2\nonumber\\
		&\le R_{\D_n,\L_0-1}- \frac{1}{\|g\|_{TV}^2} R_{\D_n,\L_0-1}^2, \label{dsfqwe}
	\end{align}
	where the last equation \eqref{dsfqwe} holds because  $R_{\D_n,\L_0-1}^+>R_{\D_n,\L_0-1}$ and $R_{\D_n,\L_0-1}\ge 0$  by  assumption. Again, using the same
	arguments above, \eqref{dsfqwe} also implies
	\begin{equation}\label{asdsa}
		R_{\D_n,\ell}\le R_{\D_n,\ell-1}- \frac{1}{\|g\|_{TV}^2} R_{\D_n,\ell-1}^2
	\end{equation}
	for any integer $1\le \ell \le \L_0$. By mathematical induction, the fact that  $R_{\D_n,1}\le R_{\D_n,0}- \frac{1}{\|g\|_{TV}^2} R_{\D_n,0}^2\le \|g\|_{TV}^2/{4}$ and \eqref{asdsa} imply that \eqref{regpart1} is true.

	According to the greedy scheme of ODT, we have
	$$
	\|\mathbb{Y}-m_{n,\L}(X)\|_n^2\le \|\mathbb{Y}-m_{n,\L_0}(X)\|_n^2.
	$$
	Therefore, the first part on the RHS of \eqref{Mainlemmaformulabeforeprun} satisfies, for any $g\in Ridge_J$,
	\begin{align}
		2 \E_{\mathcal{D}_{n}}\left( \|m_{n,\L}-\mathbb{Y}\|_n^2-\|m-\mathbb{Y}\|_n^2\right)
		\le & 2\E_{\mathcal{D}_{n}}(\|\mathbb{Y}-m_{n,\L}(X)\|_n^2- \|\mathbb{Y}-g(X)\|_n^2)\nonumber\\
		+&2\E_{\mathcal{D}_{n}}(\|\mathbb{Y}-g(X)\|_n^2-\|\mathbb{Y}-m(X)\|_n^2)\nonumber\\
		\le& \frac{2\|g\|^2_{TV}}{\lfloor\log_2{t_n}\rfloor+4}+2\E(g(X)-m(X))^2. \label{jlijl}
	\end{align}
	According to the standard density argument, we assume $m(x),x\in [0,1]^p$ is  continuous only. By \cite{pinkus1999approximation}, there is a series of $g_J\in Ridge_J, J\ge 1$ satisfying
	\begin{equation}\label{iopipo}
		\lim_{J\to\infty}{\max_{x\in [0,1]^p}{|g_J(x)-m(x)|}}=0.
	\end{equation}
	In conclusion, \eqref{jlijl} and \eqref{iopipo} together imply that
	\begin{equation}\label{djosijdisj}
		2 \E_{\mathcal{D}_{n}}\left( \frac{1}{n}\sum_{i=1}^{n}|m_{n,\L}(X_i)-Y_i|^2-\frac{1}{n}\sum_{i=1}^{n}|m(X_i)-Y_i|^2\right)\to 0
	\end{equation}
	as $t_n\to\infty$, which completes the proof of \textbf{Part I}.

    \
	
	\textbf{Part II:}   Now we analyze
    the second part on the RHS of \eqref{Mainlemmaformulabeforeprun} by applying Lemma \ref{CoveringnumbervsVCd}. The first step is to bound the VC dimension of $\mathcal{H}_{t_n}$ that is a class of ODTs having $t_n$ leaves. We first state a lemma, whose proof is provided in the supplementary document \cite{ran2024}.

    \begin{lemma}\label{vclemma} 
        The VC dimension of $\mathcal{H}_{t_n}$, denoted by $VC(\mathcal{H}_{t_n})$, is $O(t_n\ln t_n)$.
    \end{lemma}
    
     Then, by Lemma \ref{CoveringnumbervsVCd} we have
	\begin{align}
		\mathcal{N}_1(1/(80n \beta_n),\mathcal{H}^{\beta_n}_{t_n},z_1^n) \leq & 3\left( \frac{4e \beta_n}{1/(80n \beta_n)}\ln\left(\frac{6e \beta_n}{1/(80n \beta_n)} \right) \right)^{VC({\mathcal{H}^{\beta_n}_{t_n}})}\nonumber \\
		\leq & 3\left( \frac{4e \beta_n}{1/(80n \beta_n)}\ln\left(\frac{6e \beta_n}{1/(80n \beta_n)} \right) \right)^{VC({\mathcal{H}_{t_n}})}\nonumber \\
		\leq & 3\left( \frac{4e \beta_n}{1/(80n \beta_n)}\ln\left(\frac{6e \beta_n}{1/(80n \beta_n)} \right) \right)^{c(p)\cdot t_n \ln(t_n)}\nonumber \\
		\leq & 3\left( 480e n \beta_n^2 \right)^{c(p)\cdot t_n \ln(t_n)}\label{ConclusionPart2}
	\end{align}
	for any $z_1,\ldots,z_n\in\R^p$. Inequality \eqref{ConclusionPart2} implies that the second part on the RHS of \eqref{Mainlemmaformulabeforeprun} satisfies
	\begin{equation}\label{fsfsfz}
		\frac{c\ln^2 n}{n}\cdot\sup_{z_1^n}\ln\left(\mathcal{N}_1(1/(80n\beta_n),\mathcal{H}_{t_n}^{\beta_n}, z_1^n)\right)\to 0,\ \text{as}\ n\to\infty,
	\end{equation}
	when $t_n  =o\left(\frac{n}{\ln^4{n}}\right)$. This completes arguments in \textbf{Part II}.

    \
	
	\textbf{Part III}: By Lemma \ref{agirov2009estimation}, \eqref{djosijdisj} and \eqref{fsfsfz} imply that
	\begin{equation}\label{chnosnolzm}
		IV:=2\E_{\mathcal{D}_{n}} \int|\hat{m}_{n, \L}(x)-m(x)|^2d\mu(x)\to 0.
	\end{equation}
	Finally, we show that \eqref{chnosnolzm} also holds for the un-truncated estimator  $m_{n,\L}(x)$. Note that
	\begin{align}
		\E_{\mathcal{D}_{n}} \int|m_{n, \L}(x)-m(x)|^2d\mu(x)\le & 2\E \int|\hat{m}_{n, \L}(x)-m_{n, \L}(x)|^2d\mu(x)+IV\nonumber\\
		\le & 2\E \int|\hat{m}_{n, \L}(x)-m_{n, \L}(x)|^2\mathbb{I}(|m_{n, \L}(x)|>\beta_n)d\mu(x)+ IV\nonumber\\
		\le & 2\E\left( |\hat{m}_{n, \L}(X)-m_{n, \L}(X)|^2\mathbb{I}(|m_{n, \L}(X)|>\beta_n)\right)+ IV\nonumber\\
		:= &V+IV, \label{fhuifhsdoui}
	\end{align}
      where $X$ is independent to $\D_n$. Recall $\beta_n\asymp \ln n$. We bound $V$ as follows:
 	\begin{align}
		V
		\le &   2\E_{\D_n}{\left[\left(2\beta_n^2+2\max_{1\le i\le n}{|Y_i|^2}\right)\mathbb{I}\left(\max_{1\le i\le n}{|Y_i|}> \beta_n\right)\right]}\nonumber\\
		\le & 4\beta_n^2\cdot \P\left(\max_{1\le i\le n}{|Y_i|}\ge c\cdot\ln{n}\right)+4\left[\E\left(\max_{1\le i\le n}{|Y_i|^4}\right)\cdot \P\left(\max_{1\le i\le n}{|Y_i|}\ge c\cdot\ln{n}\right)\right]^{\frac{1}{2}}.\label{fhoxufp}
	\end{align}
	Since $Y_i,i=1,\ldots,n$ are i.i.d. and share with a common sub-Gaussian distribution, we have
	\begin{align} 
		\P\left(\max_{1\leq i\leq n}|Y_i|> c\cdot\ln{n}\right)=&1- \P\left(\max_{1\leq i\leq n}|Y_i|\leq c\cdot\ln{n}\right)\nonumber\\
		=&1-\left[\P(|Y_1|\leq c\cdot\ln{n})\right]^n
		\leq 1-(1-c\cdot e^{-c\cdot \ln^2{n}})^n\nonumber \\
		=& 1-e^{n\cdot \ln(1-c\cdot e^{-c\cdot \ln^2{n}})} \nonumber\\
		\leq & -n\cdot \ln(1-c\cdot e^{-c\cdot \ln^2{n}})\label{opipo}\\
		\leq & c\cdot n\cdot e^{-c\cdot \ln^2{n}},\label{duhsihrdksh}
	\end{align}
	where $\eqref{opipo}$ is obtained from the fact $1+v\le e^{v}, v\in\mathbb{R}$; and \eqref{duhsihrdksh} is due to the fact $\lim_{v\to 0}{\frac{\ln(1+v)}{v}}=1$. By \eqref{duhsihrdksh} and $\E\left(\max_{1\le i\le n}{Y_i^4}\right)\le n\cdot \E(Y_1^4)$, we have
	\begin{equation}\label{opsdpz}
		V\to 0.
	\end{equation}
	In conclusion, the combination of \eqref{fhuifhsdoui}, \eqref{chnosnolzm} and \eqref{opsdpz} finishes arguments for \textbf{Part III}.
	\hfill\(\Box\)
 \bigskip

\section{Consistency of ODRF}\label{sec.ODRF}

In this section, we study the ODT-based random forest, hereafter called ODRF. For ease of exposition,  several notations are introduced first.  Let $\Omega$ be a class of subsets of $\{1,2,\ldots,p\}$ and $q$ be a random number randomly chosen with equal probability from $\{1,\ldots,p\}$.  Given $q$, we suppose $\S$ is uniformly chosen from $\Omega_q=\{C\subseteq \Omega: Card(C)=q\}$. Thus, the law of $\S$ is defined as follows
\begin{equation}\label{probabilityS}
    \P(\S)=\frac{1}{p}\cdot\frac{1}{\binom{p}{Card(\S)}},\ \ \forall \S\subseteq \Omega.
\end{equation}
 Denote by $\mathcal{S}_\tau,\ \tau=1, 2, \ldots,$ a sequence of independent copies of $\mathcal{S}$, which can be regarded as a (random) sample of $ \S  $. Let $ X_\S $ be a sub-vector of $ X $ consisting of coordinates indexed by elements of $\mathcal{S}$ and $\Xi_{t_n-1}:=(\S_1,\ldots, \S_{t_n-1})$  with  $t_n\geq 2$.

Under these preparations, we first construct a single tree, called \textbf{random ODT}, based on data $\D_n(\mathfrak{G})\subseteq \D_n$ as follows. First,  resample data $\D_n(\mathfrak{G}):=\{(X_i,Y_i): i\in \mathfrak{G}\}$ without replacement from $\D_n$, where $\mathfrak{G}$ is a randomly chosen subset of $\{1,2,\ldots,n\}$ conditional on $Card(\mathfrak{G})=a_n$ and $a_n$ is a predefined size of subsample. 
  Then,  replace \textbf{lines 13-14} in Algorithm \ref{Algorithm.ODTtreereg} with the following steps:
 \begin{itemize}
	\item In the $\tau$-th division ($1\le \tau\le t_n-1$) of node $A$, randomly choose $\S_\tau\in\Omega $ with  $Card(\S_\tau)=q$;
	\item $(\hat{\theta}_{A},\hat{s}_A)=\argmax_{\theta_{\S_\tau}\in\Theta^q,s\in\mathbb{R}}{\Delta_{A,q}({\theta}_{\S_\tau},s)}$, where $\Delta_{A,q}$ is defined in the same way as $\Delta_{A}$ except that  data $\{(X_{i,\S_\tau},Y_i)\}_{i\in \mathfrak{G}}$ are used only in the calculation of $\Delta_{A,q}$;
	\item Partition the node $A$ into $A^+_{\hat{\theta}_A,\hat{s}_A}=\{x\in A: \hat{\theta}_A^T\cdot x_{\S_\tau}\le \hat{s}_A\}$ and  $A^-_{\hat{\theta}_A,\hat{s}_A}=\{x\in A: \hat{\theta}_A^T\cdot x_{\S_\tau}> \hat{s}_A\}$.
\end{itemize}
We call this modified algorithm as Algorithm $r1$ and denote its  output estimator as $m^r_{n,a_n,t_n,\L}(x)$. Meanwhile, the corresponding tree $T_{\D_n(\mathfrak{G}),a_n,t_n,\L}^r$ with $\L$ layers and $t_n$ leaves is named as \textbf{random ODT}. Note that $\L$ is a random variable depending on both data $\{(X_i,Y_i): i\in \mathfrak{G}\}$ and random seeds $\{\S_\tau\}_{\tau=1}^{t_n-1}$.

Next, we introduce the forest estimator based on the bagging technique.
First, we independently construct $B\in\mathbb{Z}_+$ random ODTs and their corresponding estimators are denoted by $m^{r,b}_{n,a_n,t_n,\L}(x), b=1,\ldots,B$. The ODT-based Random Forest (ODRF) in regression is defined by
\begin{equation}\label{ODTRandomforest}
m_{ODRF,B,n}(x):=\frac{1}{B}\sum_{b=1}^{B}{m^{r,b}_{n,a_n,t_n,\L}(x)}.
\end{equation}
It is worth noting that ODRF is very similar to Forest-RC in \cite{breiman2001random}, except that the latter only selects the best splitting plane with fixed $q$ variables from randomly generated hyperplanes.

In theory, we study the consistency of ODRF in two different schemes. In the first one, each tree of the forest is \textit{partially grown}, i.e. we follow a rule to stop growing a tree. Thus, each leaf might contain more than one observation. The estimation error of ODRF is controlled by restricting the divergent rate of $t_n$ and  the proof for the consistency of ODRF is similar to the proof of a single tree. For the second scheme, each tree of ODRF is \textit{fully grown} and each leaf of random ODT contains  one data point only. This makes the proof more difficult and different from the traditional error analysis for learning algorithms. In the second scheme, the key point is to assume $a_n$ is not large, which avoids the exploding of estimation error. We will use  techniques in \cite{scornet2015consistency} in the proof.

\subsection{ODRF with partially  grown trees}

The main theorem of this section is given below, which shows that ODRF is consistent in the first scheme for any general $m(x)$ by controlling its growth speed. For comparison, recall that the classical Random Forest is known to be consistent  for additive models only.

\begin{theorem}[Consistency of ODRF with partially grown trees]\label{Theoremhusidhuioh}
	Assume
	$
	\E(e^{c\cdot Y^2})<\infty,
	$
	where $c>0$ and $m(X)$ is $L^2$ integrable. If $a_n\to\infty, t_n\to\infty$ and $t_n  =o\left(\frac{a_n}{\ln^4{a_n}}\right)$, we have
	$$
	\E_{\mathcal{D}_{n},\Xi_{t_n-1}}\left(\int{|m_{ODRF,B,n}(x)-m(x)|^2d\mu(x)}\right)\to 0,\ \text{as}\ n\to\infty.
	$$
\end{theorem}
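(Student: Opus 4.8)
The plan is to reduce the forest to a single random tree and then mirror the three-part argument of Theorem \ref{consistencyodtreg}, replacing Lemma \ref{Carteqqw} by its randomized counterpart Lemma \ref{LemmaODTrandom}. Since $m_{ODRF,n}(x)=\frac{1}{B}\sum_{b=1}^B m^{r,b}_{n,t_n,\L}(x)$ in \eqref{ODTRandomforest} is an average of identically distributed random trees, convexity of $|\cdot|^2$ and Jensen's inequality give
\[
\E_{\mathcal{D}_n,\Xi_{t_n-1}}\!\left(\int|m_{ODRF,n}(x)-m(x)|^2 d\mu(x)\right)\le \E_{\mathcal{D}_n,\Xi_{t_n-1}}\!\left(\int|m^r_{n,\L}(x)-m(x)|^2 d\mu(x)\right),
\]
so it suffices to establish the consistency of a single random ODT $m^r_{n,\L}$. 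I would then apply Lemma \ref{agirov2009estimation} to the truncated estimator $\hat m^r_{n,\L}$ and bound the two terms on its right-hand side separately, following Parts I--III of the proof of Theorem \ref{consistencyodtreg}.

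The only genuinely new work is in Part I. Fixing $\D_n$, set $R^r_{\D_n,\ell}:=\|\bm{Y}-m^r_{n,\ell}(X)\|_n^2-\|\bm{Y}-g(X)\|_n^2$ and take the conditional expectation $\E_{\Xi}(\,\cdot\mid\D_n)$ throughout, since the randomness now enters through $\Xi_{t_n-1}$. As $q$ is uniform on $\{1,\ldots,p\}$, the event $\{q=p\}$ has probability $1/p$ and forces $\S_\tau=\mathbf{\Omega}$, i.e. a full-dimensional split; on this event I take $V=1$ and $g=g_J\in\mathcal{G}_J$, a full-dimensional network, so that $\Delta_{A,q}=\Delta_A$ and Lemma \ref{LemmaODTrandom} reduces to the full-dimensional bound. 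Discarding the nonnegative contributions of the remaining values of $q$ and applying Lemma \ref{LemmaODTrandom} node by node yields, after splitting $R^r_{\D_n,\ell-1}$ into positive and negative parts and using Jensen once inside the sum as in \eqref{dgiasydf} and once more to pull the square through $\E_\Xi$,
\[
\E_{\Xi}\big(R^r_{\D_n,\ell}\mid\D_n\big)\le \E_{\Xi}\big(R^r_{\D_n,\ell-1}\mid\D_n\big)-\frac{c(p)}{\|g_J\|_{TV}^2}\Big(\E_{\Xi}\big(R^r_{\D_n,\ell-1}\mid\D_n\big)\Big)^2,
\]
where $c(p)=c(p,p)/p>0$. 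Induction over $0\le\ell\le\L_0=\lfloor\log_2 t_n\rfloor$, exactly as in the derivation of \eqref{regpart1}, gives $\E_\Xi(R^r_{\D_n,\L_0}\mid\D_n)\le \|g_J\|_{TV}^2\big/\big(c(p)(\lfloor\log_2 t_n\rfloor+4)\big)$, which tends to $0$ as $t_n\to\infty$; combined with the uniform approximation \eqref{iopipo} of $m$ by $g_J$, this controls the first term of Lemma \ref{agirov2009estimation}.

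Parts II and III carry over with only cosmetic changes. The leaves of $T^r_{\D_n,\L}$ are still produced by at most $t_n-1$ cuts $\{\theta^Tx_{\S}\le s\}$, each a halfspace in $\R^p$, so the growth-function bound giving $VC(\mathcal{H}_{t_n})\le c(p)\,t_n\ln t_n$ and hence \eqref{ConclusionPart2} is unchanged, and the covering-number term of Lemma \ref{agirov2009estimation} vanishes under $t_n=o(n/\ln^4 n)$. The un-truncation argument is identical to Part III of Theorem \ref{consistencyodtreg}, because $\max_{x}|m^r_{n,\L}(x)|\le\|m\|_\infty+\max_{1\le i\le n}\epsilon_i$ still holds and the sub-Gaussian tail bound \eqref{duhsihrdksh} applies verbatim. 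Together with the single-tree reduction of the first paragraph, these steps give the claimed convergence.

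I expect the main obstacle to be Part I, and in particular the double application of Jensen's inequality: one must first lower-bound the random impurity gain $\sum_A w(A)\Delta_{A,q}$ by $\sum_A w(A)R^2(A)$ over the \emph{random} partition, and then convert $\E_\Xi\big(\sum_A w(A)R^2(A)\mid\D_n\big)$ into $\big(\E_\Xi(R^r_{\D_n,\ell-1}\mid\D_n)\big)^2$ so that the recursion closes in the single scalar $\E_\Xi(R^r_{\D_n,\ell-1}\mid\D_n)$; a careful use of the tower property over the random tree structure up to layer $\ell-1$ is needed here. A secondary point is justifying the restriction to $\{q=p\}$: it costs only the constant factor $1/p$ yet lets a full-dimensional network $g_J$, rather than a $q$-sparse one, approximate an arbitrary continuous $m$ --- which is precisely why the random-size feature-bagging scheme is consistent with no structural assumption on $m$.
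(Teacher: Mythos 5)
Your proposal follows the paper's own proof essentially step for step: the Jensen reduction from the forest $m_{ODRF,n}$ to a single random tree, the application of Lemma \ref{agirov2009estimation} to the truncated estimator, the re-derivation of Part I by discarding all values of $q$ except $\{q=p\}$ (costing only the factor $1/p$) so that Lemma \ref{LemmaODTrandom} with $V=1$ and a full-dimensional $g_J\in\mathcal{G}_J$ takes over the role of Lemma \ref{Carteqqw}, the tower-property bookkeeping over the random tree structure, and the unchanged Parts II--III. The single wrinkle is that your displayed recursion should read $\bigl(\E_{\Xi}\bigl(\max\{R^r_{\D_n,\ell-1},0\}\mid\D_n\bigr)\bigr)^2$ in place of $\bigl(\E_{\Xi}\bigl(R^r_{\D_n,\ell-1}\mid\D_n\bigr)\bigr)^2$, since the latter fails when the expected excess risk goes negative; the paper resolves exactly this with its two-case analysis on the sign of $\E_{\ell-1}(R_{\D_n,\ell-1})$ (monotonicity alone finishes once it turns nonpositive), which is the caveat you already flag, so this is a presentational fix rather than a gap.
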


\noindent \textbf{Proof sketch.}
Without loss of generality, we can assume $Card(\mathfrak{G})=n$ because there is no difference between $a_n\to\infty$ and $n\to\infty$ in this first scheme. Firstly, we  prove that the truncated estimator $\hat{m}_{n,\L}^r(x)=\max{\{\min{\{m_{n,\L}^r(x),\beta_n\}},-\beta_n\}}$, where $\beta_n\asymp \ln{n}$, is mean square consistent. Secondly, we obtain the consistency of untruncated estimator $m_{n,\L}^r(x)$ based on the first step.

The first step can be done by using a variant of  Lemma \ref{agirov2009estimation}:
	\begin{equation}\label{Mainlemmaformula1}
		\begin{aligned}
			\E_{\mathcal{D}_{n},\Xi_{t_n-1}} \int|\hat{m}^r_{n,\L}(x)-m(x)|^2d\mu(x)\leq & 2\cdot \E_{\mathcal{D}_{n},\Xi_{t_n-1}}\left( \|m_{n,\L}^r(X)-\mathbb{Y}\|_n^2-\|m(X)-\mathbb{Y}\|_n^2\right)\\
			&+\frac{c\ln^2 n}{n}\cdot\sup_{z_1^n}\ln\left(\mathcal{N}_1(1/(80n\beta_n),\mathcal{H}_{t_n}^{\beta_n}, z_1^n)\right),
		\end{aligned}
	\end{equation}
where ${\mathcal{H}_{t_n}^{\beta_n}}$ is the truncated ODT class defined below Remark \ref{remark11}; $\mathcal{N}_1$ is the cover number shown in Definition \ref{coveringnumber};  and $\mu$ is the distribution of $X$. We use similar ways in Theorem \ref{consistencyodtreg} to bound the two terms on the RHS of \eqref{Mainlemmaformula1}.  The way to bound  the cover number is not difficult and can be done using Lemma \ref{CoveringnumbervsVCd} \& \ref{vclemma}. To bound the first part, we need  a key inequality that is similar to \eqref{jlijl}:
	\begin{equation}\label{regpart2dd}
		\E_{\Xi_{t_n-1}}\left(\|\mathbb{Y}-m_{n,\L}^r(X)\|_n^2- \|\mathbb{Y}-g_p(X)\|_n^2\right)\le c(p)\cdot \frac{\|g_p\|^2_{TV}}{\lfloor\log_2{t_n}\rfloor+4},
	\end{equation}
	where $t_n>1$ and  $g_p\in\mathcal{G}_{1,p,J}$ is presented in Lemma \ref{LemmaODTrandom}. Following the approach used in Theorem \ref{consistencyodtreg}, equation \eqref{regpart2dd} relies on Lemma \ref{LemmaODTrandom}, which is a randomized version of Lemma \ref{Carteqqw}. The proof of Lemma \ref{LemmaODTrandom} is provided in \cite{ran2024}.

{\red{
		\begin{lemma}\label{LemmaODTrandom}
			Let $A$ be any internal node of random ODT, $T_{\D_n,\L}^r$, in the $\tau$-th partition. Define $R(A):=\|\mathbb{Y}-\bar{Y}_A\|_A^2-\|\bar{Y}-g_q\|^2$ for any $g_q(v)=\sum_{k=1}^V{g_{k,q}(v)}$, where $ g_{k,q}(x)\in Ridge_{k,q,J}$ with
			$$
			Ridge_{k,q,J}= \Bigg\{\sum_{j=1}^{J} {c_{k,q,j}\cdot \sigma(\theta_{\C_{k,q,j}}^Tx+d_{k,q,j})}: \C_{k,q,j}\in \Omega _q, \theta_{\C_{k,q}}\in\Theta^{q}, c_{k,q,j},d_{k,q,j}\in\mathbb{R}\\
			 \Bigg\},
			$$
			and $\sigma(v)=e^v/(1+e^v), v\in\mathbb{R}$.  If $R(A)\ge 0$ given $q=q_0$, then we have
			$$
			\E_{\S_\tau}\left({\Delta_{A,q}(\hat{\theta}_{A},\hat{s}_A)}|\D_n,q=q_0\right)\geq c(p,q_0)\cdot \frac{R^2(A)}{\|g_{q_0}\|_{TV}^2\cdot V},
			$$
			where the expectation is taken  over the random seed $\S_\tau$ and $c(p,q_0)>0$.
		\end{lemma}
}}

    Lemma \ref{LemmaODTrandom}  tells us the error gap between two adjacent layers, which can be used to establish a inequality  related to training errors of these two layers. Based on this recursive inequality, we can find an upper bound as shown in \eqref{regpart2dd}.

    Finally, balance those upper bounds of the two terms on the RHS of \eqref{Mainlemmaformula1}. The proof will be completed by properly selecting the divergence rate of $t_n$. 	\hfill\(\Box\)

\bigskip

In Theorem \ref{Theoremhusidhuioh}, the number of features in each node splitting is randomly selected from $1$ to $p$. Next, we introduce a simplified ODRF with fixed $q$ (ODRF$_q$), namely   each $\S_\tau$ is only uniformly chosen from $\Omega_q$ and each corresponding tree is  called  random ODT$_q$. In this case, we show in Corollary \ref{Corollary1ds} that the estimator $m_{ODRF_q,n}$ of ODRF$_q$  is statistically consistent for an extended additive model:
\begin{equation}\label{Extendedmodel1}
	m(x)=\sum_{\tau=1}^V{m_\tau(x_{\C_\tau})}, x\in [0,1]^p,
\end{equation}
where $x_{\C_\tau}$ is defined similarly to $x_{\A_\tau}$  and each $m_\tau$ is a function w.r.t. $q$ variables $x_{\C_\tau}$ satisfying $ Card(\C_\tau) = q $. Note that $\C_\tau, 1\le \tau\le V$ are fixed indexes and the maximum of $ V $ in model \eqref{Extendedmodel1} is $ \binom{p}{q} $.

\begin{corollary}[Consistency of ODRF$_q$]\label{Corollary1ds}
	Assume
	$
	\E(e^{c\cdot Y^2})<\infty,
	$
	where $c>0$ and $m(x)$ follows model  \eqref{Extendedmodel1} with each continuous $m_\tau(x), 1\le\tau\le V$. If $a_n\to\infty$, $t_n\to\infty$ and $t_n  =o\left(\frac{a_n}{\ln^4{a_n}}\right)$, we have
	$$
	\E_{\mathcal{D}_{n},\Xi_{t_n-1}}\left(\int{|m_{ODRF_q,B,n}(x)-m(x)|^2d\mu(x)}\right)\to 0,\ \text{as}\ n\to\infty.
	$$
\end{corollary}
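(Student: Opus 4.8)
The plan is to repeat the proof of Theorem~\ref{Theoremhusidhuioh} almost verbatim, since ODRF$_q$ is the special case of the random ODT in which the subset size is frozen at the fixed value $q$ instead of being drawn uniformly from $\{1,\ldots,p\}$. I would again apply Lemma~\ref{agirov2009estimation} to the truncated single-tree estimator $\hat{m}^r_{n,\L}(x)=\max\{\min\{m^r_{n,\L}(x),\beta_n\},-\beta_n\}$ with $\beta_n\asymp\ln n$, obtaining the two-term bound \eqref{Mainlemmaformula1}, and then show that each term vanishes. The only place where the fixed-$q$ restriction interacts with the argument is the approximation (Part~I) term; Parts~II and~III carry over unchanged.

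For Part~I, I would establish the analogue of the key inequality \eqref{regpart2dd}, namely
$$
\E_{\Xi_{t_n-1}}\!\left(\|\bm{Y}-m^r_{n,\L}(X)\|_n^2-\|\bm{Y}-g_q(X)\|_n^2\right)\le c\cdot\frac{V\,\|g_q\|_{TV}^2}{\lfloor\log_2 t_n\rfloor+4},
$$
for any $g_q=\sum_{k=1}^V g_{k,q}$ with $g_{k,q}\in\mathcal{G}_{k,q,J}$. The derivation mimics \eqref{fsjinflz}--\eqref{iaudfghbukajk122}: I would define the layerwise residual $R_{\D_n,\ell}$, take the conditional expectation over the random split indices $\mathcal{Q}_{\ell-1,2}$, and invoke Lemma~\ref{LemmaODTrandom} to lower-bound the expected impurity gain $\E_{\S^A}(\Delta_{A,q}(\hat{\theta}_A,\hat{s}_A))$ at each internal node by $c\cdot R^2(A)/(\|g_q\|_{TV}^2 V)$. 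Since here $q$ is held fixed, there is no averaging over $q$, and I simply apply Lemma~\ref{LemmaODTrandom} at that fixed value; the recursion over layers, together with Jensen's inequality and the two-case induction on the sign of $\E_{\ell-1}(R_{\D_n,\ell-1})$, then yields the displayed bound. Because $V$ is a fixed finite constant (at most $\binom{p}{q}$), the right-hand side still tends to $0$ as $t_n\to\infty$.

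The crucial new ingredient is the choice of $g_q$. Because $m$ obeys the extended additive model \eqref{Extendedmodel1}, $m(x)=\sum_{\tau=1}^V m_\tau(x_{\C_\tau})$ with each $m_\tau$ continuous in the $q$ coordinates indexed by $\C_\tau\in\mathbf{\Omega}_q$, I would apply the universal approximation limit \eqref{iopipo} (via \cite{pinkus1999approximation}) \emph{separately to each block}: for every $\epsilon>0$ there is a shallow network $g_{\tau,q}\in\mathcal{G}_{\tau,q,J}$ whose ridge directions are supported on $\C_\tau$ with $\sup_x|m_\tau(x_{\C_\tau})-g_{\tau,q}(x)|\le\epsilon/V$. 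Summing gives $g_q=\sum_\tau g_{\tau,q}\in\mathcal{G}_{k,q,J}$ with $\sup_x|m(x)-g_q(x)|\le\epsilon$, which controls the approximation error $\E(g_q(X)-m(X))^2$. Combined with the key inequality above, Part~I shows the first term of \eqref{Mainlemmaformula1} tends to $0$ as $t_n\to\infty$. Part~II is verbatim the VC/covering-number estimate of Part~II of Theorem~\ref{consistencyodtreg}---the partition still consists of $t_n$ leaves cut out by oblique hyperplanes, so \eqref{fsfsfz} holds under $t_n=o(n/\ln^4 n)$---and Part~III is the same sub-Gaussian truncation argument. A final application of Jensen's inequality,
$$
\E_{\D_n,\Xi_{t_n-1}}\!\int|m_{ODRF_q,n}(x)-m(x)|^2 d\mu(x)\le \E_{\D_n,\Xi_{t_n-1}}\!\int|m^r_{n,\L}(x)-m(x)|^2 d\mu(x),
$$
transfers consistency from the single random tree to the averaged forest.

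The main obstacle---and the reason the fixed-$q$ forest is consistent only for \eqref{Extendedmodel1} rather than for all continuous $m$---is exactly this block approximation step. With $q$ frozen below $p$, every admissible $g_q$ is a sum of ridge functions each depending on at most $q$ coordinates, so $g_q$ is itself of extended-additive form; a general continuous $m$ that does not decompose into $q$-variable summands cannot be approximated in sup-norm by such $g_q$ with controlled total variation, so $\E(g_q(X)-m(X))^2$ would fail to vanish. Under \eqref{Extendedmodel1} this difficulty disappears precisely because the blocks $\C_\tau$ have cardinality $q$ and are therefore reachable by the fixed-size feature bagging, so the per-block universal approximation applies and $\|g_q\|_{TV}=\sum_\tau\|g_{\tau,q}\|_{TV}$ stays finite for each fixed $\epsilon$.
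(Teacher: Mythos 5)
Your proposal is correct and is essentially the paper's intended argument: the paper states Corollary~\ref{Corollary1ds} without a separate proof, the understanding being that one reruns the proof of Theorem~\ref{Theoremhusidhuioh} with the subset size frozen at $q$ (so Lemma~\ref{LemmaODTrandom} is invoked at the fixed $q$ with the per-block selection probability $1/\binom{p}{q}$ replacing $p_{q_0}$, changing only constants), while Parts~II and~III carry over verbatim. Your identification of the block-wise universal approximation of each $m_\tau(x_{\C_\tau})$ under model~\eqref{Extendedmodel1} as the step that replaces the full-dimensional approximation \eqref{iopipo} --- and as the reason fixed-$q$ bagging cannot handle general continuous $m$ --- is exactly the mechanism the paper relies on, as reflected in its concluding summary of the corollaries.
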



\vspace{0.1cm}

We summarize  Theorem \ref{Theoremhusidhuioh} and Corollary \ref{Corollary1ds} as follows. If the subset size of the features $ q $ is randomly selected from $1$ to $p$ in each splitting,  the estimator is consistent for any $L^2$ integrable  regression function; if $ q $ is fixed, the estimator is consistent when the underlying regression function follows a projection pursuit structure. As a special case, $q= 1$ corresponds to the traditional RF, and we thus prove that this estimator is consistent for additive models.

\subsection{ODRF with fully grown trees}

The main characteristic of  ODRF  with fully grown trees is that each leaf  contains one data point exactly. This makes it difficult to bound its generalization error or variance.  Employing the notation in \cite{scornet2015consistency}, $a_n$ is the number of bootstrapped data points used to construct each tree and the growth of the $b$-th tree  is determined with a random seed $\Theta_b$.  In fact, $\Theta_b\ (b=1,\ldots,B)$ is designed to resample $a_n$ data points in the construction of the $b$-th tree and select $q$ variables in each node splitting of that tree. Here, we suppose $\Theta_1,\ldots,\Theta_B$ are independent sharing with the same distribution. To save space, we only  consider the first way for feature bagging, where $q$ is random and \eqref{probabilityS} is applied.

To introduce main assumptions  in this section, several notations are required. Let $\Theta$ be the population version of $\Theta_1,\ldots,\Theta_B$. Let $Z_i=\mathbb{I}_{X \Theta X_i}$ be the indicator that both $X_i$ and $X$ are in the same partition in the random ODT generated by $\D_n$ and $\Theta$. Following the same way, we define $Z_i'=\mathbb{I}_{X \Theta' X_i}$, where $\Theta'$ is an independent copy of $\Theta$. We also need notations
$$
\psi_{i,j}(Y_i,Y_j):=\E(Z_i Z_j'|X,\Theta,\Theta',X_1,\ldots,X_n,Y_i,Y_j)
$$
and
$$
\psi_{i,j}:=\E(Z_i Z_j'|X,\Theta,\Theta',X_1,\ldots,X_n).
$$
Finally, for any random variables $W_1,W_2,Z$, notation $Corr(W_1, W_2|Z)$ denotes the conditional correlation coefficient. Similar to \cite{scornet2015consistency}, we use the following assumptions  in the consistency analysis  of ODRF with fully grown trees. Define $Z_{i,j}:=(Z_i,Z_j')$ and assume that one of the two assumptions in the following is satisfied.

\begin{enumerate}
    \item[A.1]  There exist $0<\delta<1$ and $c>0$ such that,
    $$
     \E^{\frac{1}{2}}(\max_{\substack{i,j\\i\neq j}}{|\psi_{i,j}(Y_i,Y_j)-\psi_{i,j}|^2})\le c\left(\frac{1}{a_n}\right)^{2+\delta}.
    $$
    \item[A.2] There are a constant $c>0$ and a sequence $\gamma_n\to 0$ such that,
    $$
       \max_{\iota_1,\iota_2=0,1}{\frac{|Corr(Y_i-m(X_i),\mathbb{I}_{Z_{i,j}=(\iota_1,\iota_2)}|X_i,X_j,Y_j)|}{\P^\frac{1}{2}(Z_{i,j}=(\iota_1,\iota_2)|X_i,X_j,Y_j)}}\le \gamma_n
    $$
    and
    $$
        \max_{\iota_1=0,1}{\frac{|Corr((Y_i-m(X_i))^2,\mathbb{I}_{Z_{i}=\iota_1}|X_i)|}{\P^\frac{1}{2}(Z_{i}=\iota_1|X_i)}}\le c.
    $$
\end{enumerate}
Let us make some remarks on two assumptions above. First, the Assumption A.2 is similar to (H2.2) in \cite{scornet2015consistency}. Second, the difference between assumption A.1 and (H2.1) in \cite{scornet2015consistency} is that we further assume the convergence rate for $\max_{\substack{i\neq j}}{|\psi_{i,j}(Y_i,Y_j)-\psi_{i,j}|}$ measuring the connection of $Y_i$ and $Y_j$. This change is due to the observation that ODRF has more complicated partitions than those of RF in \cite{breiman2001random}. Therefore, a slightly stronger assumption is required when dealing with the estimation error for ODRF with fully grown  trees. Especially in the case where partitions are independent of $Y_1,Y_2,\ldots,Y_n$, our assumption A.1 is satisfied. Although A.1 may seem overly strong, it is the most lenient assumption we could find to meet the technical requirements.

In order to analyze the consistency of ODRF, the condition $B\to\infty$ is necessary in this second scheme. If $B$ is constant as $n\to\infty$, the consistency is hard to be guaranteed. For example, we consider the case for  $B=1$, which is just a single tree with full growth. However, we show that $t_n=o(a_n)$ is an important condition for the consistency of random ODT in the last section. Meanwhile, standard consistency conditions for partitioning estimates also require that the number of cut regions goes to infinity as $n\to\infty$; see Theorem 4.2 in \cite{gyorfi2006distribution}.  Therefore, in this section, we allow $B$ to depend on $n$, written as $B_n$. 

Finally, this second scheme is more interesting because we will show that the ensemble of $B_n$ inconsistent trees is consistent for any continuous $m(x),x\in [0,1]^p$.

\begin{theorem}[Consistency of ODRF with fully  grown trees]\label{full tree consistency}
    Assume that $X$ follows the uniform distribution in $[0,1]^p$ and $\varepsilon=Y-m(X)$ follows standard normal distribution and $X,\varepsilon$ are independent. Suppose $m(x),x\in [0,1]^p$ is continuous and assumption A.1 or A.2 holds. If $t_n=a_n$, $t_n\to\infty$, $a_n\ln n/n\to 0$ and $B_n/\ln n\to\infty$, then
    $$
    \lim_{n\to\infty} \E(m_{ODRF,B_n,n}(X)-m(X))^2=0.
    $$
\end{theorem}

\noindent \textbf{Proof sketch.}
The proof for Theorem \ref{full tree consistency} is based on the  inequality below:
\begin{align*}
    \E(m_{ODRF,B_n, n}(X)-m(X))^2 
     & \le 2\E\left(m_{ODRF,B_n,n}(X)-\E_{\Theta}(m^{r}_{a_n}(X))\right)^2\\
     &+ 2\E\left(\E_{\Theta}(m^{r}_{a_n}(X))-m(X)\right)^2
    := 2(Part_{n,1}+Part_{n,2}),
\end{align*}
where $m^{r}_{a_n}(X):=m^{r}_{n,a_n,a_n,\L}(X)$ is the abbreviation.

In fact, it is not difficult to bound the first part $Part_{n,1}$. The main point of this part is to  approximate the mean by using the empirical average.

Therefore, it is sufficient to prove the $L^2$ consistency for $\E_{\Theta}(m^{r}_{a_n}(X))$, namely $Part_{n,2}\to 0$.  This part involves much more technical analysis and we will follow the route of proof for Theorem 2 in  \cite{scornet2015consistency}. During the proof for $Part_{n,2}\to 0$, the following Lemma  \ref{key general function} plays a very important role. 

Before presenting Lemma  \ref{key general function}, we need to introduce some notations. For any node $A$, define the theoretical CART rule by
\begin{align}
\Delta_A^*(\theta,s)&= Var(Y|X\in A )- \P(\theta^TX<s|X\in A)Var(Y|\theta^TX<s,X\in A)\nonumber\\
&-\P(\theta^TX\ge s|X\in A)Var(Y|\theta^TX\ge s,X\in A). \label{jksf!2!2}
\end{align}
According to the strong law of large numbers, we know $\Delta_A(\theta,s)$, which actually depends on $n$, almost surely converges to $\Delta_A^*(\theta,s)$ as $n\to\infty$. Now we construct a theoretical random oblique decision tree $T^{*,r}(\Theta)$ by using $\Delta_A^*(\theta,s)$. Instead of applying the empirical rule $\Delta_A(\theta,s)$, each node $A$ of $T^{*,r}(\Theta)$ in this case is split based on the  cut
$$
  (\theta_A^*, s_A^*)\in \argmax_{\theta_{A(\Theta)},s} {\Delta_A^*(\theta_{A(\Theta)},s)},
$$
where $A(\Theta)\subseteq\{1,\ldots,p\}$ are chosen indices/features that are related to $\Theta$ and  used in the partition of $A$. 

Now we prove that the population mean in a theoretical node $A^*_{final}\subseteq [0,1]^p$  equals to $m(x), \forall x\in A^*_{final}$
when  $A^*_{final}$ no longer needs to be divided  according to this theoretical CART rule.

\begin{lemma}\label{key general function}
    Let $ \bm{A}_*\subseteq [0,1]^p$ be a  node satisfying $\Delta_{\bm{A}_*}(\theta,s)\equiv 0$ for any $\theta$ and $s$. Then, the regression
function $m(x)=\E(Y|X=x),x\in [0,1]^p$ is constant on $\bm{A}_*$.
\end{lemma}

The proof of Lemma \ref{key general function} is provided in \cite{ran2024}.  Lemma \ref{key general function} sheds light on the reason why ODRF is consistent for general regression functions. This result guarantees the training error goes to zero as long as there are sufficient cuts.

On the other hand, we use the condition $a_n=o(n)$ to bound the generalization error of ODRF. The proof idea of this part comes from the variance bound in \cite{scornet2015consistency}. \hfill\(\Box\)

\section{Consistency of gradient boosting tree and its bagging}\label{sec.Consistency of the  boosting tree}
{
The algorithm of gradient boosting tree was first developed by \cite{friedman2001greedy}, and has been popularly used in machine learning. For simplicity, we call it the boosting tree later. In a boosting process, trees are constructed in a sequential manner, where the $k-$th tree is trained by using predictors and residuals from previous trees. Then, the estimator of $m(x)$ in step $k$ is a linear combination of the first $k$ trees. We refer to \cite{friedman2001greedy} for more details about the boosting tree. In this section, we will use ODT as the basic tree model (also known as the base learner) in the boosting process. To improve the performance of a boosting tree, we apply the feature bagging later in the same way as the random forest; see also in Section \ref{sec.ODRF}. Our final estimator of $m(x)$, which is called the ensemble of ODT-based boosting trees and denoted by ODBT, is the average of many boosting trees.


Motivated by the spirit in \cite{friedman2001greedy}, the ODT-based boosting tree is constructed as follows.  
 




    

	\begin{algorithm}[H]\label{Algorithm.BODT}
		\caption{ODT-based boosting tree }
		\KwIn{Data $\D_n=\{(X_i,Y_i)\}_{i=1}^n$, pre-specified number of leaves $t$, and maximum number of iterations $k$.}
		
		Initialize the number of iteration $j=1$, data $ \D_n^1=\D_n$.
		
		\For{$j \in  \{1, 2,\cdots, k\}$}{

            Obtain the tree estimator $ m_{n, t,\L_j} $ by using Algorithm \ref{Algorithm.ODTtreereg} with data $\D_n^j$;
            
			Estimate  $m(x)$ in the step $j$  by
			\begin{equation}\label{jhajkhdjkDSTEP323}
				m_{j,boost}(x):= \sum_{\ell=1}^j{a_{j,\ell}^*m_{n, t,\L_\ell}(x)},
			\end{equation}
			where
			\begin{equation}\label{jhajkhdjkDSTEP3}
				(a_{j,1}^*,a_{j,2}^*,\ldots,a_{j,j}^*):=arg\min_{\substack{a_{j,\ell}\in\R\\\ell=1,\ldots,j}}{\sum_{i=1}^n{\left(Y_i- \sum_{\ell=1}^j{a_{j,\ell}m_{n, t,\L_\ell}(X_i)}\right)^2}};
			\end{equation}


           Calculate residual $r_{j,i}=Y_i-m_{j,boost}(X_i)$ and update data $\D_n^{j+1}=\{(X_i,r_{j,i})\}_{i=1}^n$;

           Update $j\leftarrow j+1$;
		}
		\KwOut{The boosting estimator  $m_{k,boost}(x), x\in [0,1]^p$.}
	\end{algorithm}

Let us give some explanations of Algorithm \ref{Algorithm.BODT}. Usually, the generic boosting algorithm consists of two steps, including the gradient step and the multiplier choice step; see \cite{friedman2001greedy}. Line 3 in Algorithm \ref{Algorithm.BODT} is to choose the best (gradient) tree  during each loop, which totally coincides with the gradient step in \cite{friedman2001greedy}. However, we make an improvement in the second step. Instead of determining the single step length $a_{j,j}^*$ in original boosting algorithm by 
$$
 a_{j,j}^*:=\argmin_{a\in\R}{ \sum_{i=1}^n(r_{j-1,i}-a\cdot m_{n,t,\L_j}(X_i))^2},
$$
we optimize previous coefficients $(a_{j,1}^*,a_{j,2}^*,\ldots,a_{j,j}^*)$ in \eqref{jhajkhdjkDSTEP3} together. This technique can improve our training process, which was called orthogonal matching pursuit and originally proposed by \cite{pati1993orthogonal}.

 Next, we use bagging to improve the efficiency of above boosting tree,  called  \textbf{ODBT}, by  replacing ODT with \textbf{random ODT} (see its definition in Section \ref{sec.ODRF}).  To simplify notations, we set $a_n=n$ here and thus the full data $\D_n$ is used in this boosting process. 
It is worth noting that during each construction of  random ODT, the choice of random seed $\Xi_{t-1}$ (see the beginning of Section \ref{sec.ODRF}) is different and independent. With a slight abuse of notation, $ m_{k,boost}(x)$ in \eqref{jhajkhdjkDSTEP323} is also used to denote the estimator obtained by running Algorithm \ref{Algorithm.BODT} with random ODT. After running $B$ times of this modified  Algorithm \ref{Algorithm.BODT} independently,  the ensemble estimator (\textbf{ODBT}) is given by
    \begin{equation}\label{boostingtruncatedone}
    m^{ens}_{k,boost}(x):= \frac{1}{B}\sum_{b=1}^B m^{b}_{k,boost}(x),
    \end{equation}
where $m^{b}_{k,boost}(x)$ is the estimator of the $b$-th round. In  following theoretical analysis, the truncated version of \eqref{boostingtruncatedone}:
$$\hat{m}^{ens}_{k_n,boost}(x):=\frac{1}{B} \sum_{b=1}^B\max{\{\min{\{m^{b}_{k_n,boost}(x),\beta_n\}},-\beta_n\}}$$ is considered, where $\beta_n>0$ denotes the threshold. In  asymptotical analysis, we require $k$ depends on $n$ and $k$ is replaced by $k_n$ in above expression.

    \begin{theorem}[Consistency of the ensemble of boosting trees]\label{Boosting tree consi}
        Assume 
	$
	\E(e^{c\cdot Y^2})<\infty
	$
	for some  $c>0$  and that $m(X)$ is  $L^2$ integrable. If 
 $\beta_n \asymp\ln n$, $k_n\to\infty$ and $k_n  =o\left(\frac{n}{\ln^4{n}}\right)$, for any fixed $t\ge 2$ we have
	$$
	\E\left(\int{|\hat{m}^{ens}_{k_n,boost}(x)-m(x)|^2d\mu(x)}\right)\to 0,\ \text{as}\ n\to\infty.
	$$ 
    \end{theorem}

    The proof of Theorem \ref{Boosting tree consi} is based on the key proposition below. 
    
    \begin{proposition}\label{proposition1}
        The boosting tree  
        with $k$ ODTs or $k$ random ODTs (namely the output of Algorithm \ref{Algorithm.BODT} or its modified version) equals to a neural network with the Heaviside activation $\sigma_0(v)=\mathbb{I}(v\ge 0), v\in\R$. Meanwhile, this neural network has three layers with $(p+1)t^2k$ neurons in the first hidden layer and  $t(t+1)k$ neurons in the second hidden layer and $tk$ neurons in the final layer.
    \end{proposition}


The proof of Proposition \ref{proposition1} is provided in \cite{ran2024}. This proposition is significant in its own right, as it demonstrates that a boosting tree can be interpreted as a neural network. Consequently, techniques from neural network analysis can be leveraged to study the boosting tree.  In recent works,  the \textit{Conclusion and Future Work} in \cite{cattaneo2022convergence} suggests that it is likely to study multi-layer networks by ODT. Here, we do not follow their proof but consider the converse way. Actually, we will show that theories about trees can be indeed studied by using the properties of neural networks.  

 \
 
 \noindent \textbf{Proof sketch of Theorem \ref{Boosting tree consi}.} Following Jensen's inequality, it is sufficient to prove the consistency for a single boosting tree. Here, we employ Lemma   \ref{agirov2009estimation} once again to show the consistency for $\hat{m}^1_{k_n,boost}$. We first bound the second term on the RHS of \eqref{Mainlemmaformulabeforeprun},  which is related to the generalization error of $\hat{m}^1_{k_n,boost}$, by using Proposition \ref{proposition1}. Next techniques in neural networks are applied again to bound the first term that has relationship with the training error of $\hat{m}^1_{k_n,boost}$. The proof will be completed after balancing these two parts and choosing a proper sequence of $k_n$.
 
 \

    \textit{Generalization error of $\hat{m}^1_{k_n,boost}$.} Let $\mathcal{NN}_{3}$ be the neural network class indicated in Proposition \ref{proposition1} and its truncated version be 
    $$
            \mathcal{NN}_{3}^{\beta_n}:=\left\{\max{\{\min{\{f,\beta_n\}},-\beta_n\}}: f\in  \mathcal{NN}_{3} \right\}.
    $$
    Note that $\mathcal{NN}_{3}$ has $W=k_n((p+2)t^2+2t)$ parameters. Following the  result in the section of \textbf{Piecewise constant} that is given on page 5 of \cite{bartlett2019nearly}, the VC dimension of $\mathcal{NN}_3$ satisfies
    $$
          VC(\mathcal{NN}_3)=O(W\ln W),
    $$
    where $VC(\cdot)$ denotes the VC dimension; see Definition \ref{defvc}. Since both $p$ and $t$ are fixed, we also have \begin{equation}\label{ajdhnasjdjBerr}
       VC(\mathcal{NN}_3^{\beta_n})\le VC(\mathcal{NN}_3)=O(k_n\ln k_n).
    \end{equation}
    Following similar analysis in \eqref{ConclusionPart2}, \eqref{ajdhnasjdjBerr} leads that
    \begin{equation}\label{aihkdjbmaBGKOLqqb}
        \mathcal{N}_1(1/(80n \beta_n),\mathcal{NN}_3^{\beta_n},z_1^n)\le 3\left( 480e n \beta_n^2 \right)^{c(p,t)\cdot k_n \ln(k_n)}
    \end{equation}
   for any  $z_1^n=\{z_1,\ldots, z_n\}$ with $z_1,\ldots,z_n\in\R^p$. This completes the arguments in this part. 
   
   \

   \textit{Training error of $\hat{m}^1_{k_n,boost}$.} In the second part, we bound the training error of $\hat{m}^1_{k_n,boost}$ using techniques in neural networks again. Here, we aim to prove an oracle inequality of $m_{k_n,boost}(x),x\in [0,1]^p$ constructed by random ODTs.   Before presenting this result in Lemma \ref{Oracle inequality for boosting tree}, we introduce a class of shallow neural networks:
   \begin{equation}\label{sdhbfcjhjkdbGGGG}
       \mathcal{G}:=\left\{  \sum_{j=1}^\infty a_{j}\sigma_0(\theta_j^\top x+s_j): \sum_{j=1}^\infty{|a_j|}<\infty, \theta_j\in\R^p, s_j\in\R \right\},
   \end{equation}
   where $\sigma_0(v)=\mathbb{I}(v\ge 0), v\in\R$ is the Heaviside activation. 
   The details of the above arguments and the following lemma are provided in \cite{ran2024}.

\begin{lemma}[Oracle inequality for a  boosting tree]\label{Oracle inequality for boosting tree} Let $m_{k_n,boost}$ be a single boosting tree with $k_n$ random ODTs. For any $h(x)=\sum_{j=1}^\infty a_{j}\sigma_0(\theta_j^\top x+s_j)\in \mathcal{G}$, we have 
       $$
        \E\left(\|\mathbb{Y}-m_{k_n,boost}\|_n^2|\D_n\right)\leq \|\mathbb{Y}-h\|_n^2 + c(p) \left(\sum_{j=1}^\infty{|a_j|}\right)^2\cdot\frac{1}{k_n+1},
     $$
     where $\mathbb{Y}:=(Y_1,Y_2,\ldots,Y_n)^\top\in\R^n$  and the constant $c(p)>0$  depends on $p$ only.
   \end{lemma}
   
\

If the regression function $m(x)$ belongs to  $\mathcal{G}$ defined in \eqref{sdhbfcjhjkdbGGGG}, we can further show that the ensemble boosting tree has a fast convergence rate $\ln^4 n/\sqrt{n}$ as follows.
     
    \begin{theorem}[Fast consistency rate of ODBT]\label{Fast Boosting tree consi}
        Assume 
	$
	\E(e^{c\cdot Y^2})<\infty
	$
	for some  $c>0$  and that $m(x)\in\mathcal{G}$ defined in \eqref{sdhbfcjhjkdbGGGG}.  If 
   $\beta_n\asymp\ln n$ and  $k_n\asymp \sqrt{n}$, for any  fixed $t\ge 2$ we have
	$$
	\E\left(\int{|\hat{m}^{ens}_{k_n,boost}(x)-m(x)|^2d\mu(x)}\right)=O\left(\frac{\ln^4 n}{\sqrt{n}}\right).
	$$ 
  \end{theorem}

The proof of Theorem \ref{Fast Boosting tree consi}  is similar to that for Theorem \ref{Boosting tree consi} but with no need to approximate $m(x)$ by using functions in $\mathcal{G}$. 

  \begin{remark} 
In fact, the function class $\mathcal{G}$ in Theorem \ref{Fast Boosting tree consi} is quite large because the closure of $ \mathcal{G}$ is equal to the $L^2$ space, $\{m(x):\E(m^2(X))<\infty\}$, by the universal approximation theorem of neural networks. 
  \end{remark}

In Theorem \ref{Fast Boosting tree consi}, we obtain the consistency  rate $\ln^4 n/\sqrt{n}$ for the boosting tree provided that $m(x)\in\mathcal{G}$. Next, we compare Theorem \ref{Fast Boosting tree consi} with Theorem 3.2 in \cite{cattaneo2022convergence}, which gives a fast consistency rate of ODT. These comparisons are in three-fold. Firstly, their rate  $n^{-2/(2+q)}$  is  slower than our rate $\ln^4 n/\sqrt{n}$, where $q>2$ is not clearly specified in their Assumption 3. Secondly, our rate $\ln^4 n/\sqrt{n}$ is free from the curse of dimensionality, while  \cite{cattaneo2022convergence} cannot show that $q$ does not depend on the dimension. Thirdly, in order to get fast consistency rate for ODT, two additional technical conditions, namely Assumption 3 $\&$ 4,  are required in \cite{cattaneo2022convergence} and  hard to verify. On the other hand,   the class of functions considered in their paper is slightly larger than ours when the summation in the space $\mathcal{G}$ above is finite,
but they are otherwise the same since the closure of each one equals the $L^2$ space.  According to these comparisons, it is highly possible that the boosting tree based on ODT is more efficient than the original ODT. Meanwhile, numerous real data performances in Section \ref{secReal} also show the superiority of our ensemble boosting tree over many existing tree methods. 

 Finally, we end this section by giving a lower bound of boosting tree. Consider the normalized version of $\mathcal{G}$:
 $$
\mathcal{G}_1:=\left\{  \sum_{j=1}^\infty a_{j}\sigma_0(\theta_j^\top x+s_j), x\in [0,1]^p: \sum_{j=1}^\infty{|a_j|}\le 1, \theta_j\in\R^p, s_j\in\R \right\}.
$$
From Theorem \ref{Fast Boosting tree consi}, we know the upper bound of our boosting tree satisfies 
\begin{equation}\label{GJYHGVBJGBbdasjbsdQ1}
\sup_{m(x)\in\mathcal{G}_1}\E\left(\int{|\hat{m}^{ens}_{k_n,boost}(x)-m(x)|^2d\mu(x)}\right)\le c\cdot \frac{\ln^4 n}{\sqrt{n}} 
\end{equation}
In fact, the following result shows that the consistency rate in \eqref{GJYHGVBJGBbdasjbsdQ1} is nearly minimax optimal  when $p$ is large,  whose proof is provided in \cite{ran2024}.

\begin{theorem}[Lower bound of ODBT]\label{theorem_lowerbound}
    Suppose $X\sim U[0,1]^p$ and $\varepsilon=Y-m(X)\sim N(0,1)$. There exists a constant $c > 0$ such that
    $$     \inf_{\hat{m}}\sup_{m(x)\in\mathcal{G}_1}\E\left(\int{|\hat{m}(x)-m(x)|^2dx }\right)\ge c\cdot\left(\frac{1}{n}\right)^{\frac{2p+2}{4p+2}},
    $$
    where the infimum is taken over all estimators.
\end{theorem}

\section{Numerical performance in real data} \label{secReal}


Note that the main difficulty in implementing ODT or ODRF is the estimation of the coefficient, $ \theta $, for the linear combinations, which is also one of the main differences amongst all the existing packages. The estimation methods of $ \theta $ include random projection, logistic regression, dimension reduction and many others. However, our experiments suggest that these estimations actually make little difference in the results. In our calculation, instead of using one single projection or linear combination, we provide a number of $ \theta $'s, each of which is for the projection of a set of randomly selected $ q $ predictors, and then use Gini impurity or residuals sum of squares to choose one combination as splitting variable and splitting point. This is similar to \cite{menze2011oblique}, where a number of random projections are provided from which one is selected. Because of the low estimation efficiency of $ \theta $ as dimension $ q $ increases, we select  $q $ randomly from 1 to $min([n^{0.5}], p) $ when splitting each node. This selection of $ q $ satisfies the requirements of Theorem \ref{Theoremhusidhuioh} or Theorem \ref{full tree consistency} for the consistency. Our ODT and ODRF are implemented using our "ODRF" package in R  via link \url{https://cran.r-project.org/web/packages/ODRF} and  codes for ODBT are publicly accessible at \url{https://github.com/liuyu-star/ODBT}.  In our calculation, the logistic regression function is used to find $ \theta $ for each combination of $ q $ predictors, but other alternatives are also provided in the package. We also scale the predictors individually before the computation, and while this makes no difference in theory, it sometimes makes the calculation more stable.

Our ODRF and ODBT are compared with the following methods or packages: the Random Rotation Random Forest (RotRF) of \cite{blaser2016random} which randomly rotates the data prior to inducing each tree,  the Sparse Projection Oblique Random Forests (SPORF) of \cite{tomita2020sparse} which simply uses the random projection method and other methods that also use linear combinations as splitting variables including the method of \cite{silva2021projection}, denoted by \texttt{PPF},  and the method of  \cite{menze2011oblique}, denoted by \texttt{ORF}. The comparison is also made with three axis-aligned popular methods, including Random Forest (RF) of \cite{breiman2001random}, Generalized Random Forest (GRF) of \cite{athey2019generalized} and Reinforcement Learning Trees (RLT) of \cite{zhu2015reinforcement}, and three popular boosting methods, including the extreme gradient boosting (XGB) of \cite{chen2016xgboost}, the Boosted Regression Forest (BRF) of \cite{athey2019generalized}, and Generalized Boosted Regression Models (GBM) of \cite{friedman2002stochastic}.

The following functions and packages in R are used for the calculations: Axis-aligned methods including \texttt{randomForest} for RF,  \texttt{regression\_forest} and \texttt{Classification\_forest} in package \texttt{grf} for GRF,  and \texttt{RLT} for RLT. Oblique methods including \texttt{rotationForest} for RotRF, \texttt{RerF} in package \texttt{rerf} for SPORF, \texttt{obliqueRF} for ORF, and \texttt{PPforest} for PPF. boosting  methods including \texttt{xgboost} for XGB, \texttt{boosted\_regression\_forest} in package \texttt{grf} for BRF, and \texttt{gbm} for GBM. We used the default tuning parameter values for all packages, but we used 100 trees for the ensemble methods. Note that because \texttt{PPF} and \texttt{ORF} cannot be used for regression, we only report their classification results.

We use 20 real data sets with continuous responses and 20 data sets with binary categorical response (0 and 1) to demonstrate the performance of the above methods. The data are available at one of the following websites (A) \url{https://archive.ics.uci.edu/ml/datasets}, (B) \url{https://github.com/twgr/ccfs/} and (C) \url{https://www.kaggle.com}. If there are any missing values in the data, the corresponding samples are removed from the data. In the calculation, each predictor is scaled to $[0, 1]$. Specific information about all data is summarized in a supplementary; see Table S.1.1 in the supplementary document.

\begin{table}
		\centering
		\caption{Regression: average RPE based on 100 random partitions of each data set into training and test sets}\label{Table1}%
		\setlength{\tabcolsep}{1.6mm}
		{
			\begin{tabular}{lrrrrrrrrrr}
						\toprule%
						& \multicolumn{3}{c}{Axis-aligned} & \multicolumn{3}{c}{Oblique}& \multicolumn{4}{c}{Boosting}  \\
						\cmidrule(lr){2-4}\cmidrule(lr){5-7}\cmidrule(lr){8-11}%
						data & RF    & GRF   & RLT  & RotRF & SPORF & ODRF&XGB&BRF&GBM& ODBT \\
						\midrule
						data.1 & 0.287  & 0.268  & 0.310  & 0.377  & 0.246  & 0.179  & $\bm{0.108}$ & 0.385  & 0.308  & 0.127  \\
						data.2 & 0.844  & 0.793  & 0.822  & 0.797  & $\bm{0.774}$ & 0.778  & 1.197  & 0.817  & 0.862  & 0.820  \\
						data.3 & 0.132  & 0.149  & 0.135  & 0.141  & 0.143  & 0.130  & 0.152  & 0.139  & 0.144  & $\bm{0.117 }$ \\
						data.4 & 0.397  & $\bm{0.365}$ & 0.391  & 0.403  & 0.415  & 0.369  & 0.504  & 0.367  & 0.381  & 0.395  \\
						data.5 & 0.398  & 0.388  & 0.307  & 0.541  & 0.503  & $\bm{0.321}$ & 0.353  & 0.374  & 0.444  & 0.356  \\
						data.6 & 0.078  & 0.039  & $\bm{0.028}$ & 0.163  & 0.139  & 0.034  & 0.039  & 0.039  & 0.041  & 0.037  \\
						data.7 & 0.011  & $\bm{0.000}$ & $\bm{0.000}$ & 0.265  & 0.114  & $\bm{0.000}$ & $\bm{0.000}$ & $\bm{0.000}$ & 0.001  & 0.001  \\
						data.8 & 0.240  & 0.202  & $\bm{0.049}$ & 0.456  & 0.362  & 0.292  & 0.068  & 0.117  & 0.675  & 0.189  \\
						data.9 & 0.402  & 0.460  & 0.435  & 0.438  & 0.421  & 0.353  & 0.491  & 0.445  & 0.423  & $\bm{0.349 }$ \\
						data.10 & 0.114  & 0.198  & 0.150  & 0.216  & 0.166  & 0.100  & 0.147  & 0.141  & 0.138  & $\bm{0.085 }$ \\
						data.11 & 0.244  & 0.359  & 0.289  & 0.284  & 0.274  & 0.246  & 0.263  & 0.237  & 0.647  & $\bm{0.179 }$ \\
						data.12 & 0.012  & 0.016  & $\bm{0.001}$ & 0.293  & 0.165  & $\bm{0.001}$ & $\bm{0.001}$ & 0.005  & 0.003  & 0.006  \\
						data.13 & 0.820  & 0.873  & 0.788  & 0.817  & 0.814  & $\bm{0.763}$ & 0.927  & 0.848  & 0.829  & $\bm{0.763 }$ \\
						data.14 & 0.046  & 0.081  & 0.021  & 0.196  & 0.157  & 0.019  & $\bm{0.018}$ & 0.045  & 0.026  & 0.019  \\
						data.15 & 0.064  & 0.101  & 0.050  & 0.138  & 0.118  & 0.045  & 0.050  & 0.050  & 0.051  & $\bm{0.040 }$ \\
						data.16 & $\bm{0.737}$ & 0.831  & 0.776  & 0.792  & 0.773  & 0.753  & 0.822  & 0.805  & 0.798  & 0.756  \\
						data.17 & 0.529  & 0.582  & $\bm{0.527}$ & 0.766  & 0.570  & 0.541  & 0.629  & 0.560  & 0.566  & 0.652  \\
						data.18 & 0.111  & 0.151  & 0.119  & 0.251  & 0.124  & $\bm{0.102}$ & 0.113  & 0.105  & 0.169  & 0.137  \\
						data.19 & 0.049  & 0.123  & 0.048  & 0.129  & 0.080  & 0.050  & 0.061  & $\bm{0.039}$ & 0.224  & 0.059  \\
						data.20 & 0.012  & 0.027  & 0.022  & 0.044  & 0.015  & $\bm{0.010 }$ & 0.024  & 0.015  & 0.130  & 0.018  \\
					\hline
					Average&0.276  & 0.300  & 0.263  & 0.375  & 0.319  & $\bm{0.254}$  & 0.298  & 0.277  & 0.343  & 0.255  \\
				no. of bests &1     & 2     & 5     & 0     & 1     & $\bm{6}$     & 4     & 2     & 0     & $\bm{6}$  \\
						\bottomrule
					\end{tabular}%
				}
			\end{table}%

			\begin{table}[!h]
					\centering
					\caption{Classification: average MR (\%) based on 100 random partitions of each data set into training and test sets}\label{Table2}%
					\setlength{\tabcolsep}{1.2mm}
					{
							\begin{tabular}{lrrrrrrrrrrrr}
										\toprule%
										& \multicolumn{3}{c}{Axis-aligned} & \multicolumn{5}{c}{Oblique}& \multicolumn{4}{c}{Boosting}  \\
										\cmidrule(lr){2-4}\cmidrule(lr){5-9}\cmidrule(lr){10-13}%
										data & RF    & GRF   & RLT  & RotRF & SPORF &PPF&ORF& ODRF&XGB&BRF&GBM&ODBT \\
										\midrule
									data.21 & 6.88  & 5.50  & 6.34  & 6.64  & 6.27  & 9.51  & 7.56  & 4.99  & 11.23  & 5.70  & $\bm{1.87  }$ & 6.16  \\
									data.22 & 12.85  & 18.46  & 17.57  & 14.14  & 12.14  & 47.49  & 8.03  & 9.03  & 16.62  & 12.80  & 34.92  & $\bm{5.25 }$ \\
									data.23 & 32.56  & 33.45  & 30.20  & 31.93  & 28.34  & 33.36  & 24.43  & 24.15  & 34.85  & 33.19  & 34.71  & $\bm{23.95 }$ \\
									data.24 & 27.91  & 30.29  & 30.09  & 27.96  & 27.22  & 34.96  & 26.23  & 27.21  & 30.83  & 29.77  & 34.56  & $\bm{25.00 }$ \\
									data.25 & 10.49  & 11.90  & 11.04  & 9.19  & 9.26  & 10.85  & 7.39  & 6.88  & 11.02  & 10.81  & 10.83  & $\bm{6.08 }$ \\
									data.26 & 4.28  & 5.87  & 5.38  & 2.85  & 3.30  & 4.20  & $\bm{2.81  }$ & 2.88  & 6.11  & 5.11  & 4.20  & 3.26  \\
									data.27 & 15.20  & 17.03  & 93.62  & 14.02  & 13.77  & 19.75  & 40.62  & 14.44  & 14.61  & 12.85  & 14.54  & $\bm{11.63 }$ \\
									data.28 & 13.37  & 15.36  & 13.68  & 13.22  & 13.03  & 17.48  & 12.94  & 12.77  & 15.79  & 14.65  & 17.11  & $\bm{12.19 }$ \\
									data.29 & 5.41  & 7.45  & 5.43  & 7.54  & 5.19  & 10.60  & 7.37  & 5.37  & 6.75  & 6.28  & 8.16  & $\bm{4.97 }$ \\
									data.30 & 1.54  & 6.40  & 1.50  & 1.25  & 1.00  & 4.21  & $\bm{0.60  }$ & 1.56  & 6.89  & 1.63  & 11.83  & 1.28  \\
									data.31 & 6.19  & 6.83  & 6.45  & 6.21  & 6.16  & 19.69  & 6.06  & $\bm{6.05  }$ & 7.23  & 6.75  & 6.71  & 6.40  \\
									data.32 & $\bm{3.08  }$ & 3.21  & 3.09  & 3.19  & 3.13  & 3.11  & 3.18  & 3.14  & 3.62  & 3.18  & 3.13  & 3.19  \\
									data.33 & 41.32  & 46.01  & 42.38  & 11.21  & 1.30  & 29.23  & $\bm{0.00  }$ & $\bm{0.00  }$ & 38.11  & 45.98  & 50.51  & $\bm{0.00 }$ \\
									data.34 & 43.93  & 49.36  & 76.03  & 21.07  & 14.47  & 29.38  & 4.70  & 4.34  & 76.97  & 76.95  & 99.81  & $\bm{3.64 }$ \\
									data.35 & $\bm{3.97  }$ & 6.91  & 4.62  & 5.53  & 4.71  & 8.06  & 4.23  & 4.53  & 4.62  & 4.60  & 9.98  & 4.47  \\
									data.36 & 11.95  & 19.85  & 11.98  & 18.41  & $\bm{9.39  }$ & 21.68  & 17.53  & 10.52  & 14.15  & 10.66  & 19.69  & 18.90  \\
									data.37 & $\bm{19.66  }$ & 24.21  & 22.54  & 24.10  & 20.96  & 23.00  & 23.14  & 22.06  & 24.86  & 20.33  & 20.07  & 27.81  \\
									data.38 & 1.82  & 1.30  & 0.19  & 14.31  & 2.40  & 1.98  & 12.11  & 0.10  & $\bm{0.06  }$ & 0.30  & $\bm{0.06  }$ & $\bm{0.06 }$ \\
									data.39 & 32.10  & 36.23  & $\bm{29.22  }$ & 41.25  & 38.29  & 41.83  & 43.99  & 39.93  & 29.29  & 36.46  & 41.11  & 46.85  \\
									data.40 & 0.06  & 0.14  & 0.12  & 0.05  & 0.06  & 0.08  & $\bm{0.00  }$ & 0.06  & $\bm{0.00  }$ & 0.13  & $\bm{0.00  }$ & 0.07  \\
										\hline
										Average & 14.73  & 17.29  & 20.57  & 13.70  & 11.02  & 18.52  & 12.65  & $\bm{10.00}$  & 17.68  & 16.91  & 21.19  & 10.56  \\
										no. of bests & 3     & 0     & 1     & 0     & 1     & 0     & 4     & 2     & 2     & 0     & 3     & $\bm{10}$  \\
										\bottomrule
									\end{tabular}%
								}
							\end{table}%
			
			For each data set, we randomly partition it into the training set and the testing set. The training set consists of $n=\min(\lfloor 2N/3\rfloor,2000)$ randomly selected observations, where $N$ is the number of observations in the original data sets, and the remaining observations form the test set. For regression, the relative prediction error, defined as
			$$RPE=\sum_{i\in \text{test set}}(\hat{y}_i-y_i)^2/\sum_{i\in \text{test set}}(\bar{y}_{\text{train}}-y_i)^2,$$
			where $\bar{y}_{\text{train}}$ is naive predictions based on the average of $y$ in the training sets, is used to evaluate the performance of a method. For classification, the misclassification rate, defined as
			$$MR=\sum_{i\in \text{test set}} 1(\hat{y}_i \neq y_i) /(N-n),$$
			is used to assess the performance. For each data set, the random partition is repeated 100 times,  and averages of the RPEs or MRs are calculated to compare different methods. The calculation results are listed in {Table \ref{Table1}} and {Table \ref{Table2}}. The smallest RPE or MR for each data set is highlighted in \textbf{bold} font.

By comparing the prediction errors, either in terms of RPE for regression or MR for classification, our ODRF is generally smaller than the other methods. Our ODRF is quite stable and achieves the smallest RPE and MR in most datasets as listed in {Table \ref{Table1}} and {Table \ref{Table2}}. The advantages of ODRF are also confirmed by the fact that it has the smallest average RPE (or MS) across all datasets of all methods. The number of data sets for which a method is the best among all competitors, denoted by \textit{no. of bests}, also suggests the superiority of ODRF over others, including both marginal-based forests and those with linear combinations as partitioning variables.  On the other hand, our ODBT has similar performance to our ODRF, as shown in {Table \ref{Table1}} and {Table \ref{Table2}}, and the number of bests for ODBT in Table \ref{Table2} is even far superior to that of ODRF and other methods.
			
						
		Finally, we make a brief conclusion for the above experiments. Although many computer programs are developed for ODRF, but they don't show consistently better performance over RF and are not commonly received; see for example \cite{majumder2020ensembles}. We attribute this lack of improvement to the programming details and the choice of linear combinations for the splitting. After refining these issues and redesigning the bagging, all of our experiments, including many not reported here, can indeed produce a more significant overall improvement than RF. With this numerical improvement and theoretical guarantee of consistency, ODRF or e.ODBT is expected to become more popular in the future.

\section*{Supplements}

\cite{ran2024}, a publicly accessible supplementary document, contains additional details on the proofs, explanations of the R package \textsf{ODRF} developed from this paper, its tuning parameter choices, and further calculation results comparing different methods along with their computation times.


\section*{Acknowledgements}
The authors are grateful to the Editor, Associate Editor and two referees for their meticulous review and valuable comments. The research is partially supported by the National Natural Science Foundation of China (72033002 and 12271081) and an MOE AcRF tier 1 grant of Singapore (A-8001949-00-00) and the SUSTech-NUS Joint Research Program. Yu Liu is supported by the Central Government Fund for Guiding Local Scientific and Technological Development (2024ZYD00192) and the Fundamental Research Funds for the Central Universities (2682024ZTPY024).


\bibliographystyle{imsart-nameyear} 





\bibliography{ODTref2}


\newpage
\setcounter{page}{1}
\setcounter{figure}{0}
\setcounter{table}{0}
\setcounter{algocf}{0}

\renewcommand{\thetable}{S.\arabic{table}}
\renewcommand\thesection{S.\arabic{section}}
\renewcommand{\theequation}{s.\arabic{equation}}
\renewcommand{\thealgocf}{S.\arabic{algocf}}

\begin{frontmatter}
	\title{Supplement to "Consistency of  oblique decision tree and its boosting and random forest"}
	\runtitle{Supplement to "Consistency of Oblique Decision Tree"}

\begin{aug}
	\author[AC]{\fnms{Haoran}~\snm{Zhan}\ead[label=e1]{haoran.zhan@u.nus.edu}}
	\author[B]{\fnms{Yu}~\snm{Liu}\ead[label=e2]{liuy8stat@sicnu.edu.cn}}
	\author[AC]{\fnms{Yingcun}~\snm{Xia}\ead[label=e3]{staxyc@nus.edu.sg}}


      \address[AC]{Department of Statistics and Data Science,
	National University of Singapore}
    
	\address[B]{School of Mathematics Science, Sichuan Normal University \\ \printead[presep={\ }]{e1,e2,e3}}


\end{aug}

\end{frontmatter}

\renewcommand\thetheorem{S.\arabic{theorem}}
\renewcommand\thelemma{S.\arabic{lemma}}
\renewcommand\thefigure{S.\arabic{figure}}
\renewcommand\thedefinition{S.\arabic{definition}}
\renewcommand{\theequation}{s\arabic{section}.\arabic{equation}}
\setcounter{theorem}{0}
\setcounter{corollary}{0}

\allowdisplaybreaks

In this supplement, we provide auxiliary and supporting results essential to the proofs in the main paper.  Any notations introduced in the main text will be used consistently in this document without further elaboration. To avoid confusion, we prefix section, equation, inequality, algorithm, lemma, and theorem labels in this supplement with "S" or "s" to distinguish them from those in the main paper.  This supplement is organized as follows:
\begin{itemize}
\item Section \ref{sec::Pruning} introduces the pruning process for the oblique decision tree (ODT) and analyzes its statistical consistency.
\item Sections \ref{sec::2}–\ref{sec::last} contain the full detailed proofs of Lemma \ref{vclemma}, Lemma  \ref{LemmaODTrandom}, Theorem \ref{Theoremhusidhuioh}, Theorem \ref{full tree consistency}, Lemma \ref{key general function}, Lemma  \ref{equicontinuous lemma}, Proposition \ref{proposition1}, Lemma \ref{Oracle inequality for boosting tree}, Theorem \ref{Boosting tree consi} and Theorem \ref{theorem_lowerbound}, respectively.

\item Section \ref{computation} presents additional simulation results and addresses computational considerations, including the selection of tuning parameters.
\end{itemize}


\section{Pruning of ODT and its consistency}\label{sec::Pruning}

\subsection{Regression}
Pruning is an important technique in reducing the complexity of the tree model and the variance of the tree estimator. See, for example, \cite{breiman1984classification}. For  a regression tree, pruning is to select the best number of leaves by balancing the squared loss function and a penalty:
$$
t_{n,r}^*=\argmin_{1\le \tau\le n}{\frac{1}{n}\sum_{i=1}^n{(Y_i-m_{n, \tau, \L}(X_i))^2}+\alpha_n\cdot \tau},
$$
where $\alpha_n>0$ is a penalty parameter. Then, the pruned estimator in the  regression is given by
$$
m_{pru,n}(x):= m_{n, t_{n,r}^*, \L}(x).
$$

\begin{theorem}[Consistency of ODT after pruning]\label{consistencyodtreg2}
	Assume
	$
	\E(e^{c\cdot Y^2})<\infty
	$
	for some $c>0$ and $m(X)$ is $L^2$ integrable. Choose $\alpha_n=\nu \frac{\ln^5 n}{n},\nu>0$.  There is $\nu_0>0$ such that 
	$$
	\E_{\D_n}\left(\int{|m_{pru,n}(x)-m(x)|^2d\mu(x)}\right)\to 0,\ \text{as}\ n\to\infty
	$$
    as long as $\nu>\nu_0$.
\end{theorem}

\begin{remark}
    The term $\ln^5 n$ in $\alpha_n$ is used to control the sub-Gaussian tail of $Y$ and will be reduced to $\ln n$ if $Y$ is bounded.
\end{remark}

\begin{proof}

First, we introduce a result.
\begin{lemma}\label{afterprunlemmaa}
	Let $\beta_n\asymp \ln{n}$ and
	$
	\E(e^{c\cdot Y^2})<\infty
	$
	for some $c>0$ and $\alpha_n=\nu \frac{\ln^5 n}{n},\nu>0$. There is $\nu_0>0$ such that for any $1\le \tau\le n$ and $g_J\in \mathcal{G}_{J}$, we have
	\begin{equation}\label{bSta4}
		\begin{aligned}
			&\E_{\mathcal{D}_{n}}\left( \int|\hat{m}_{pru,n}(x)-m(x)|^2d\mu(x)\cdot\mathbb{I}(A_n)\right) \\
			&\le 2\E_{\mathcal{D}_{n}}\left(\left(\|m_{\tau}(X)-\mathbb{Y}\|_n^2-\|g_J(X)-\mathbb{Y}\|_n^2\right)\cdot \mathbb{I}(A_n)\right) \\
			&+ c\cdot\E\left(m(X)-g_J(X)\right)^2
			+ 2\tau\cdot\alpha_n + c\cdot \frac{\ln^2 n}{n},
		\end{aligned}
	\end{equation}
	where $\hat{m}_{pru,n}(x)=\max{\{\min{\{m_{pru,n}(x),\beta_n\}},-\beta_n\}}$, and $A_n=\{\max_{1\le i\le n}{|Y_i|\le \beta_n}\}$, $c>0$, 
     $$
	Ridge_{J}=\Big\{\sum_{j=1}^{J} {c_j\cdot \sigma(\theta_j^Tx+d_j)}: \theta_j\in\Theta^p, c_j,d_j\in\mathbb{R}, \forall j\ge 1\Big\}
	$$
	and $\sigma(v)=e^v/(1+e^v), v\in\mathbb{R}$.
\end{lemma}

\begin{proof}
	The proof is similar to the proof of  (6.24) in Lemma 1 of \cite{zhan2022ensemble}.
\end{proof}

Let $A_n=\{\max_{1\le i\le n}{|Y_i|\le \beta_n}\}$  and $\hat{m}_{pru}(x)=\max{\{\min{\{m_{pru}(x),\beta_n\}},-\beta_n\}}$, where $\beta_n\asymp \ln{n}$. Note that
	\begin{align}
		\E_{\D_n}\left(\int{|m_{pru,n}(x)-m(x)|^2d\mu(x)}\right)\le & 2\E_{\D_n}\left(\int{|m_{pru,n}(x)-\hat{m}_{pru,n}(x)|^2d\mu(x)}\right)\nonumber\\
		&+2\E_{\D_n}\left(\int{|\hat{m}_{pru,n}(x)-m(x)|^2d\mu(x)}\cdot\mathbb{I}(A_n)\right)\nonumber\\
		&+2\E_{\D_n}\left(\int{|\hat{m}_{pru,n}(x)-m(x)|^2d\mu(x)}\cdot\mathbb{I}(A_n^c)\right) \nonumber  \\
		:= & I+II+III.\label{guifshuik}
	\end{align}
By  similar arguments  of $V\to 0$ in \textbf{Part III} of the proof of Theorem \ref{consistencyodtreg}, it is not difficult to show that
	\begin{equation}\label{adhgauidh}
		I\to 0.
	\end{equation}
	Next, we consider $II$. For any $\varepsilon>0$, \eqref{iopipo} shows that for some large $J$ there exists $g_J\in Ridge_J$ such that
	\begin{equation}\label{jijipoq}
		c\cdot\E\left(m(X)-g_J(X)\right)^2\le \frac{\varepsilon}{3}.
	\end{equation}
	Fix  $g_J$ above and let $\tau_n\asymp\sqrt{n}$. Then, by \eqref{regpart1} we know that there is  $N_1\in\mathbb{Z}_+$ such that \begin{equation}\label{reserrt1}
		2\E\left((\|\mathbb{Y}-m_{\tau_n}(X)\|_n^2- \|\mathbb{Y}-g(X)\|_n^2)\cdot\mathbb{I}(A_n)\right)\le \frac{2\|g_J\|^2_{TV}}{\log_2{\tau_n}+4}\le\frac{\varepsilon}{3}
	\end{equation}
	for all $n\ge N_1$. We can also find $N_2\in\mathbb{Z}_+$ such that
	\begin{equation}\label{jdisjdfsp}
		2\tau_n\cdot\alpha_n + c\cdot \frac{\ln^2 n}{n}\le \frac{\varepsilon}{3}
	\end{equation}
	for all $n\ge N_2$. By Lemma \ref{afterprunlemmaa}, the combination of \eqref{jijipoq}, \eqref{reserrt1} and \eqref{jdisjdfsp} shows that
	\begin{equation}\label{huihdaskuhdku}
		II\to 0.
	\end{equation}
	Note that $\sup_{x\in [0,1]^p}|\hat{m}_{pru,n}(x)|\le \beta_n$ and $m(X)$ is $L^2$ integrable. From \eqref{duhsihrdksh} we have
	\begin{equation}\label{huohud}
		III\to 0.
	\end{equation}
	Finally,  the combination of \eqref{guifshuik}, \eqref{adhgauidh}, \eqref{huihdaskuhdku} and \eqref{huohud} completes the proof.
\end{proof}

\subsection{Classification}
For binary classification problem, where $Y$ only takes $0$ or $1$, we can still use ODT but employ  Gini impurity  to divide each internal node. This criterion is defined by
$$
\Delta^c_A(\theta,s)=-\sum_{k=1}^2{P^2(k|A)}+P(A^+_{\theta,s})\sum_{k=1}^2{P^2(k|A^+_{\theta,s})}+P(A^-_{\theta,s})\sum_{k=1}^2{P^2(k|A^-_{\theta,s})},
$$
where $P(k|A)$ denotes the proportion of class $k$ in $A$ and $k\in\{1,2\}$. For this oblique classification tree,  it can be checked that Algorithm \ref{Algorithm.ODTtreereg} still works after changing $\Delta_A(\theta,s)$ in  \textbf{line 13} by $\Delta_A^c(\theta,s)$. After voting, the estimated class of input $x\in[0,1]^p$ is equal to
\begin{equation}\label{hdfuioshjdfois}
	\hat{C}_{n,t_n}(x)=
	\left\{
	\begin{array}{lc}
		1,      &   m_{n,\L}(x) \geq 0.5, \\
		0,      &   m_{n,\L}(x)< 0.5 .\\
	\end{array}
	\right.
\end{equation}

For classification tree, pruning is also used select the best number of leaves by balancing empirical risk and a penalty:
$$
t_{n,c}^*=\argmin_{1\le \tau\le n}{\frac{1}{n}\sum_{i=1}^n{\mathbb{I}(Y_i\neq  \hat{C}_{n,\tau}(X_i))}+\alpha_n\cdot \tau},
$$
where $\alpha_n>0$ is a given penalty parameter and $\mathbb{I}(\cdot)$ denotes the indicator function. Then, the classifier after pruning is given by
$$
\hat{C}_{pru,n}(x):= \hat{C}_{n,t_{n,c}^*}(x).
$$

With some calculations, we have
	$$	\Delta^c_A(\theta,s)=2\Delta_A(\theta,s)
	$$
	if $Y$ only takes $0$ or $1$. Therefore, Theorem \ref{consistencyodtreg} in main paper can be also used in the case of classification tree. We know  $\hat{C}_{n,t_n}$ is  Bayes-optimal in asymptotical sense by using Theorem \ref{consistencyodtreg} and Theorem 1.1 in \cite{gyorfi2006distribution}.

\begin{corollary}
	The  misclassification  probability of classifier $\hat{C}_{n,t_n}(x)$ in \eqref{hdfuioshjdfois} satisfies
	$$
	\P(\hat{C}_{n,t_n}(X)\neq Y)-\inf_{f:[0,1]^p\to\{0,1\}}{\P\left(f(X)\neq Y\right)} \to 0\ \ \text{as}\ \ n\to \infty
	$$
	under conditions in Theorem \ref{consistencyodtreg}.
\end{corollary}

\section{Proof of Lemma \ref{vclemma}}\label{sec::2}
For any tree $h_{t_n}\in\mathcal{H}_{t_n}$, it can be written as 
    \begin{equation}\label{ODT tree2}
         h_{t_n}(x)=\sum_{j=1}^{t_n}\I(x\in\mathbb
    {A}_{\L}^j)\cdot c_j,
    \end{equation}
    where $c_j\in\R$ and $\mathbb{A}_{\L}^j, j=1,\ldots,t_n$ are leaves obtained in Algorithm \ref{Algorithm.ODTtreereg}. To reduce notations,  each leaf $\mathbb
    {A}_{\L}^j$ are abbreviated as $\mathbb
    {A}_j$ in this proof. Meanwhile, we also use $\{\mathbb
    {A}_1,\ldots,\mathbb
    {A}_{t_n};c_1,\ldots,c_{t_n}\}$ to denote tree \eqref{ODT tree2} later.

    In order to bound $VC(\mathcal{H}_{t_n})$, we need to consider the related Boolean class:
    	$$
	\mathcal{F}_{t_n}= \{sgn(f(x,y)): f(x,y)=h(x)-y, h\in\mathcal{H}_{t_n}\},
	$$
	where $sgn(v)=1$ if $v\ge 0$ and $sgn(v)=-1$ otherwise and $\mathcal{H}_{t_n}$ is defined below Lemma \ref{CoveringnumbervsVCd}. Recall the VC dimension of $\mathcal{F}_{t_n}$, denoted by $VC(\mathcal{F}_{t_n})$, is the largest integer $m\in\mathbb{Z}_+$ satisfying $2^m\leq\Pi_{\mathcal{F}_{t_n}}(m)$; see Definition \ref{defvc}. Therefore, we next focus  on bounding $\Pi_{\mathcal{F}_{t_n}}(m)$ for each positive integer $m\in\mathbb{Z}_+$. 

    Define the partition set generated by $\mathcal{H}_{t_n}$:
    $$
\Pa_{t_n}:=\left\{ \{\mathbb{A}_1,\ldots,\mathbb
    {A}_{t_n}\}: \{\mathbb
    {A}_1,\ldots,\mathbb
    {A}_{t_n};c_1,\ldots,c_{t_n}\}\in\mathcal{H}_{t_n}\right\}
    $$
    and the maximal partition number of $m$ points cut by $\Pa_{t_n}$:
    \begin{equation}\label{1}
        K(t_n):=\max_{x_1,\ldots,x_m\in [0,1]^p}Card\left(\left\{ \bigcup_{\Pa\in\Pa_{t_n}}\{\{x_1,\ldots,x_m\}\cap A: A\in\Pa\}\right\}\right).
    \end{equation}

    Since each ODT takes constant value on its leaf $\mathbb
    {A}_j$ $(j=1,\ldots,t_n)$, thus there are at most $\ell+1$ ways to pick out any fixed points $\{(x_1,y_1),\ldots, (x_\ell.y_\ell)\}\subseteq[0,1]^p\times \R$ that lie in a same leaf. In other words, 
    \begin{align}
        \max_{\substack{(x_j,y_j)\in[0,1]^p\times\R\\ j=1,\ldots,\ell}}Card\Big(\Big\{(sgn(f(x_1,y_1))&,\ldots,sgn(f(x_\ell,y_\ell)))\nonumber\\
        &: f(x,y)=c-y,x\in [0,1]^p, y\in\R, c\in\R\Big\}\Big)\le \ell +1.\label{2}
    \end{align}

    According to the structure of ODT in \eqref{ODT tree2}, the growth function of $\mathcal{H}_{t_n}$ satisfies
    \begin{equation}\label{3}
        \Pi_{\mathcal{F}_{t_n}}(m)\le K(t_n)\cdot (m+1)^{t_n}.
    \end{equation}

    Therefore, the left task is to bound $K(t_n)$ only. In fact, we will use induction method to prove 
    \begin{equation}\label{4}
        K(t_n)\le \left[ c(p)(3m)^{p+1}\right]^{t_n},
    \end{equation}
    where the constant $c(p)>0$ depends on $p$ only.  The arguments are given below.

    When $t_n=1$, the partition generated by $\mathcal{H}_{1}$ is $\{[0,1]^p\}$. Thus, $K(1)=1$ and \eqref{4} is satisfied obviously in this case.

    Suppose \eqref{4} is satisfied with $t_n-1$. Now we consider the case for $t_n$. Here, we need two facts. First, any $h_{t_n}$ must be grown from a $h_{t_n-1}$ by splitting one of its leaves; Second, any $h_{t_n-1}$ can grow to be another $h_{t_n}$ by partitioning one of its leaves. In conclusion, 
    \begin{equation}\label{-1}
          \mathcal{H}_{t_n}=\{h_{t_n}:h_{t_n} \text{\ is\ obtained by splitting one of the leaves of } h_{t_n-1}\in \mathcal{H}_{t_n-1}\}.
    \end{equation}
    Suppose $x_1^*,\ldots,x_m^*\in [0,1]^p$ maximizes $K(t_n)$. We divide $\mathcal{H}_{t_n-1}$ into $K(t_n-1)$ groups:
    $$
 \mathcal{H}_{t_n-1}^j:=\left\{ h_{t_n-1}\in  \mathcal{H}_{t_n-1}: h_{t_n-1}s\text{ share with a same partition for } \{x_1^*,\ldots,x_m^*\} \right\},
    $$
    where $j=1,2,\ldots, K(t_n-1)$. Meanwhile, we write the partition for $\{x_1^*,\ldots,x_m^*\}$ that is cut by $ \mathcal{H}_{t_n-1}^j$ as
    \begin{equation}\label{5}
      \underbrace{  \{x^*_{u_1^j},x^*_{u_2^j},\ldots,x^*_{u^j_{m_{1,j}}}\}} _{m_{1,j}\ \text{times}},\ldots, \underbrace{\{x^*_{u^j_{m_{1,j}+\cdots+m_{t_n-2,j}+1}},\ldots,x^*_{u_m}\}}_{m_{t_n,j}\ \text{times}},
    \end{equation}
    where $(u_1^j,\ldots,u_m^j)$ is a permutation of $(1,\ldots,m)$ and the cardinality of above sets are written as $m_{1,j},m_{2,j},\ldots, m_{t_n,j}$. Note that both $(u_1^j,\ldots,u_m^j)$ and $(m_{1,j},m_{2,j},\ldots,m_{t_n,j})$ are determined once the index $j$ is fixed.

    If we have generated a tree with $t_n$ leaves from its parent in $\mathcal{H}_{t_{n-1}}^j$, the corresponding partition for $\{x_1^*,\ldots,x_m^*\}$ can be obtained by following two steps below
    \begin{enumerate}
        \item Partition one of sets in \eqref{5} by using a hyperplane $\theta^Tx=s$, where $\theta\in\R^p, s\in\R$.
        \item Keep other $t_n-2$ sets in \eqref{5} unchanged.
    \end{enumerate}
    Denote by $\tilde{K}_j$ the number of partition for $\{x_1^*,\ldots,x_m^*\}$ after following above process. Then, the Sauer–Shelah lemma (see for example Lemma 6.10 in \cite{shalev2014understanding} ) tells us 
    \begin{equation*}
        \tilde{K}_j\le  \sum_{\ell=1}^{t_n-1}c(p)\left( \frac{m_\ell e}{p+1}\right)^{p+1}\le \sum_{\ell=1}^{t_n-1}c(p)\left( 3m_\ell\right)^{p+1}
        \le c(p)\cdot (3m)^{p+1}.
    \end{equation*}
    Since  each $\mathcal{H}_{t_{n-1}}^j$ can at most produce $c(p)(3m)^{p+1}$ new partitions of $\{x_1^*,\ldots,x_m^*\}$ based on \eqref{5}, \eqref{-1} and $\mathcal{H}_{t_{n}-1}=\cup_{j=1}^{K(t_n-1)}\mathcal{H}_{t_{n-1}}^j$ imply
    $$
   K(t_n)\le  K(t_n-1)\cdot c(p) (3m)^{p+1}.
    $$
    Therefore, \eqref{4} also holds for the case of $K(t_n)$.

According to \eqref{3} and \eqref{4}, we finally have
$$
  \Pi_{\mathcal{F}_{t_n}}(m)\le [c(p)\cdot (3m)^{p+1}]^{t_n}\cdot (m+1)^{t_n}\le (3c(p)m)^{t_n(p+2)}.
$$
Solving the inequality
	$$
	2^m\le (3c(p)m)^{t_n(p+2)}
	$$
	by using the basic inequality $\ln x\leq\gamma\cdot x-\ln \gamma-1 $ with  $ x,\gamma >0$ yields
	\begin{equation}
		VC(\mathcal{H}_{t_n})\le c(p)\cdot t_n\ln(t_n), \forall t_n\ge 2,
	\end{equation}
	where the constant $c(p)>0$ depends on $p$ only. This completes the proof. \hfill\(\Box\)

\section{Proof of Lemma  \ref{LemmaODTrandom}}

	For simplicity,  notations $A^+$ and $A^-$ are used to denote $A^+_{\hat{\theta}_A,\hat{s}_A}$ and $A^-_{\hat{\theta}_A,\hat{s}_A}$ respectively when $q=q_0$. For each $1\le k\le L$, let $\Delta_{A}(\hat{\theta}_{A},\hat{s}_A, x_{\C_{k,q_0}})$ be the value of impurity gain in \eqref{impuritygainreg} when variables $x_{\C_{k,q_0}}$ are   used only in the calculation of ${\Delta_{A}(\hat{\theta}_{A},\hat{s}_A)}$, namely only $x_{\C_{k,q_0}}$ are employed to divide the node $A$. Then, we have
	\begin{equation}\label{huhadikl}
		\E_{\S_\tau}\left({\Delta_{A,q}(\hat{\theta}_{A},\hat{s}_A)}|\D_n,q=q_0\right)\geq p_{q_0}\cdot\sum_{k=1}^V{\Delta_{A}(\hat{\theta}_{A},\hat{s}_A, x_{\C_{k,q_0}})}\geq p_{q_0}\cdot\max_{1\le k\le V}{\Delta_{A}(\hat{\theta}_{A},\hat{s}_A,x_{\C_{k,q_0}}}),
	\end{equation}
	where $p_{q_0}=1/p\cdot1/\binom{p}{q_0}$ is the probability of each ${\C_{k,q_0}}$. Let $g_{q_0}(x)=\sum_{k=1}^V{g_{k,q_0}(\theta_{\C_{k,q_0}}^Tx)}\in Ridge_{k,q_0, J}$, where $g_{k,q_0}(\theta_{\C_{k,q_0}}^Tx)= \sum_{j=1}^{J} {c_{k,q_0,j}\cdot \sigma(\theta_{\C_k}^Tx+d_{k,j})},  c_{k,q_0,j},d_{k,q_0,j}\in\mathbb{R}$. Define a series of weight functions of parameter $s\in [-1,1]$ by
	$$
	w_{k,q_0}(s)=\frac{|g'_{k,q_0}(s)|\sqrt{P(A^+|\C_{k,q_0})P(A^-|\C_{k,q_0})}}{\sum_{k=1}^V{\int_{-1}^{1}{|g'_{k,q_0}(s)|\sqrt{P(A^+|\C_{k,q_0})P(A^-|\C_{k,q_0})}}ds}},
	$$
	where $P(A^+|\C_{k,q_0})$ and $P(A^-|\C_{k,q_0})$ are proportions of data in $A^+$ and $A^-$ within $A$ respectively if the hyperplane $\theta^Tx_{\C_{k,q_0}}=s$ is used to divide $A$. Note that $w_{k,q_0}(s)\ge 0$ and $\int_{-1}^1{w_{k,q_0}(s)ds}\leq 1$. Following arguments on page 19 of \cite{klusowski2021universal}, it is not difficult to show that
	\begin{equation}\label{pikojiq}
		\max_{1\le k\le V}{\Delta_{A}(\hat{\theta}_{A},\hat{s}_A,x_{\C_{k,q_0}}})\geq \frac{|\langle \mathbb{Y}-\bar{Y}_A,g_{k,q_0}(\theta^T_{\C_{k,q_0}}x)\rangle_A|^2}{\left(\sum_{k=1}^V{\int_{-1}^{1}{|g'_{k,q_0}(s)|\sqrt{P(A^+|\C_{k,q_0})P(A^-|\C_{k,q_0})}}ds}\right)^2}
	\end{equation}
	for each $1\le k\le V$. By taking average on $|\langle \mathbb{Y}-\bar{Y}_A,g_{k,q_0}(\theta^T_{\C_{k,q_0}}x)\rangle_A|^2, 1\le k\le V$, \eqref{pikojiq} implies
	\begin{equation}\label{bkdsahdlkh}
		\max_{1\le k\le V}{\Delta_{A}(\hat{\theta}_{A},\hat{s}_A,x_{\C_{k,q_0}}})\geq \frac{|\langle \mathbb{Y}-\bar{Y}_A,g_{q_0}\rangle_A|^2}{V\cdot\left(\sum_{k=1}^V{\int_{-1}^{1}{|g'_{k,q_0}(s)|\sqrt{P(A^+|\C_{k,q_0})P(A^-|\C_{k,q_0})}}ds}\right)^2}.
	\end{equation}
	Note that
	\begin{align}
		\langle \mathbb{Y}-\bar{Y}_A,g_{q_0}\rangle_A &= \langle \mathbb{Y}-\bar{Y}_A,\mathbb{Y}\rangle_A-\langle \mathbb{Y}-\bar{Y}_A,\mathbb{Y}-g_{q_0}\rangle_A\nonumber\\
		&\geq \|\mathbb{Y}-\bar{Y}_A\|_A^2- \|\mathbb{Y}-\bar{Y}_A\|_A\|\mathbb{Y}-g_{q_0}\|_A\nonumber\\
		&\geq \|\mathbb{Y}-\bar{Y}_A\|_A^2-\frac{1}{2}\cdot \left(\|\mathbb{Y}-\bar{Y}_A\|_A\|^2+\|\mathbb{Y}-g_{q_0}\|_A^2\right),\label{lkpoakd}
	\end{align}
	where the Cauchy-Schwarz inequality is used in the second line.  Therefore,  \eqref{bkdsahdlkh} and \eqref{lkpoakd} and the fact that $R(A)\ge 0$ by the assumption of this lemma imply
	\begin{equation}\label{eqeqa}
		\max_{1\le k\le V}{\Delta_{A}(\hat{\theta}_{A},\hat{s}_A,x_{\C_{k,q_0}}})\geq \frac{R^2(A)}{4V\cdot\left(\sum_{k=1}^V{\int_{-1}^{1}{|g'_{k,q_0}(s)|\sqrt{P(A^+|\C_{k,q_0})P(A^-|\C_{k,q_0})}}ds}\right)^2}.
	\end{equation}
	Next, we only need to consider how to bound the denominator of the RHS of \eqref{eqeqa}. In fact, following similar arguments on page 20 of \cite{klusowski2021universal}, it is easy to get a bound by using the total variation of each $g_{k,q_0}, 1\le k\le V$:
	\begin{equation}\label{mdipajdmop}
		\sum_{k=1}^V{\int_{-1}^{1}{|g'_{k,q_0}(s)|\sqrt{P(A^+|\C_{k,q_0})P(A^-|\C_{k,q_0})}}ds} \le \frac{1}{2}\cdot \|g_{q_0}\|_{TV}.
	\end{equation}
	Therefore, the combination of \eqref{huhadikl} and \eqref{eqeqa} and \eqref{mdipajdmop} completes the proof.
	\hfill\(\Box\)

\section{Proof of Theorem \ref{Theoremhusidhuioh}}

To prove the consistency, we also need  a truncated random tree at any particular layer $ \ell: 0 \le \ell \le \L $, denoted by  $ T_{\D_n,a_n,t_n,\ell}^r $. For simplicity, we abbreviate  $ T_{\D_n(\mathfrak{G}),a_n,t_n,\ell}^r $ as $ T_{\D_n(\mathfrak{G}),\ell}^r$ for each layer $\ell$. Similarly, $m^r_{n,\L}(x)$ is an abbreviation of $m^r_{n,a_n,t_n,\L}(x)$ if there is no confusion. 

Here, we highlight two points. First, the expectation in \eqref{regpart2dd} is not taken over $\D_n$. Thus, we just fix $\D_n$ and omit it when writing conditional expectations or probabilities during the proof for \eqref{regpart2dd}.   Second, for any internal node $A$ of $T^r_{\D_n,\L}$, $\Delta_{A,q}(\hat{\theta}_A,\hat{s}_A)$ only depends on $\S_\lambda$ for some $1\le\lambda\le t_n-1$ once data $\D_n$ is given.

It is easy to observe the following two facts:
	\begin{itemize}
		\item Regardless of the choice of each $\S_\tau,1\le \tau\le t_n-1$, the level $\L$ of any random ODT, $T_{\D_n,\L}^r$, must not be less than $\L_0=\lfloor\log_2{t_n}\rfloor$;
		\item Tree $T_{\D_n,\ell}^r$ is fully grown for each $0\leq \ell\le \L_0$, namely $T_{\D_n,\ell}^r$ is obtained recursively by splitting all leaves of $T_{\D_n,\ell-1}^r$ until all leaves contain only one data point.
	\end{itemize}	
	Recall that $\L$ is a random variable depending on both $\D_n$ and $\S_\tau, 1\le\tau\le t_n-1,$ and that $m_{n,\ell}^r(x)$ is the estimator obtained using the leaves of $T_{\D_n,\ell}^r$. Given $\D_n$, define the expectation of $\|\mathbb{Y}-m_{n,\ell}^r(X)\|_n^2$ over random $\theta$'s by
	\begin{equation}\label{pokpok}
		\E_{\ell}\left(\|\mathbb{Y}-m_{n,\ell}^r(X)\|_n^2\right):=\sum_{T^r_{\D_n,\ell}} {\P_\theta{\left(T_{\D_n,\ell}^r\right)}}\cdot \|\mathbb{Y}-m_{n,\ell}^r(X)\|_n^2,
	\end{equation}
	where $\P_\theta{\left(T_{\D_n,\ell}^r\right)}$ is the probability of a realization of $\S_\tau$s, each corresponding to a partition of an internal node of $T_{\D_n,\ell}$. Using the fact that $\|\mathbb{Y}-m_{n,\ell}^r(X)\|_n^2$ is almost certainly a decreasing sequence as $\ell$ increases, it is easy to check that
	\begin{equation}\label{fuosjhnfliszjnkf}
		\E_{\Xi_{t_n}}\left(\|\mathbb{Y}-m_{n,\L}^r(X)\|_n^2\right)\leq \E_{\L_0}\left(\|\mathbb{Y}-m_{n,\L_0}^r(X)\|_n^2\right).
	\end{equation}
	So the next inequality we need to prove is
	\begin{equation}\label{dhzuadhnilq}
		\E_{\L_0}\left(\|\mathbb{Y}-m_{n,\L_0}^r(X)\|_n^2\right)-\|\mathbb{Y}-g_p(X)\|_n^2\leq c(p)\cdot\frac{\|g_p\|^2_{TV}}{\lfloor\log_2{t_n}\rfloor+4}
	\end{equation}
	for some $c(p)>0$.	
 
Using the same notation in the proof of Theorem \ref{consistencyodtreg},  we define $R_{\D_n,\ell}: =\|\mathbb{Y}-m^r_{n,\ell}(X)\|_n^2-\|\mathbb{Y}-g(X)\|_n^2$ for each $1\le\ell\le \L_0$. Similarly, let    $\mathcal{O}_{\ell-1,1}:=\{\mathbb{A}_{\ell-1}^j\}_{j=1}^{k_{\ell-1}}$ be leaves of $T^r_{\D_n,\ell-1}$ and $\mathcal{O}_{\ell-1,2}:=\{\mathbb{A}_{\ell-1}^{m_j}\}_{j=1}^{k'_{\ell-1}}\subseteq \mathcal{O}_{\ell-1,1}$ of which each node  $\mathbb{A}_{\ell-1}^{m_j}$ must   contain at least two data points. Let $\mathcal{Q}_{\ell-1,2}\subseteq\{\S_\tau\}_{\tau=1}^{t_n-1}$ which corresponds to each partition of each node in $\mathcal{O}_{\ell-1,2}$. Then, we have
			\begin{equation}\label{fsjinflz}
				R_{\D_n,\ell}=  R_{\D_n,
					\ell-1}-\sum_{A\in \mathcal{O}_{\ell-1,2}}{w(A)\cdot \Delta_{A,q}(\hat{\theta}_{A},\hat{s}_{A}}),
			\end{equation}
			where  $w(A)= \frac{1}{n}\cdot Card(\{X_i\in A: i=1,\ldots, n\})$ as defined above. Given the tree $T^r_{\D_n,\ell-1}$, define the conditional expectation on $\mathcal{Q}_{\ell-1,2}$ by
	\begin{equation}
		\E_{\mathcal{Q}_{\ell-1,2}}\left(R_{\D_n,\ell}|T^r_{\D_n,\ell-1} \right) :=  \sum_{m_{n,\ell}^r\ \text{is from } T^r_{\D_n,\ell}\ \text{generated by } T^r_{\D_n,\ell-1}}{\P_\theta{\left(T_{\D_n,\ell}^r\right)}\cdot \|\mathbb{Y}-m_{n,\ell}^r(X)\|_n^2}.\label{rtyrfht}
	\end{equation}
	Then, based on \eqref{fsjinflz} we also have
	\begin{equation}\label{fjxisjflisdjflisj}
		\E_{\mathcal{Q}_{\ell-1,2}}\left(R_{\D_n,\ell}|T^r_{\D_n,\ell-1} \right) = R_{\D_n,
			\ell-1}-\sum_{A\in  \mathcal{O}_{\ell-1,2}}{w(A)\cdot \E_{\S^A}\left(
			\Delta_{A,q}(\hat{\theta}_{A},\hat{s}_{A})
			\right)},
	\end{equation}
	where random index $\S^A\in \{\S_\tau\}_{\tau=1}^{t_n-1}$ corresponds to the partition of $A$. Note that leaves $\mathcal{O}_{\ell-1,2}$ are not randomized once the tree $T^r_{\D_n,\ell-1}$ is given.
	
	Now, we are ready to prove \eqref{dhzuadhnilq}. By \eqref{pokpok} and \eqref{rtyrfht}, it is easy to check
	\begin{align}
		\E_{\ell}\left(R_{\D_n,\ell}\right)&=\sum_{T^r_{\D_n,\ell}} {\P_\theta{\left(T_{\D_n,\ell}^r\right)}}\cdot \E_{\mathcal{Q}_{\ell-1,2}}\left(R_{\D_n,\ell}|T^r_{\D_n,\ell-1} \right)\nonumber\\
		&= \E_{\Xi_{t_n}}\left(\E_{\mathcal{Q}_{\ell-1,2}}\left(R_{\D_n,\ell}|T^r_{\D_n,\ell-1} \right)\right),\label{pkoopkq}
	\end{align}
	which is similar to the law of iterated expectations. We highlight the fact that both  $T^r_{\D_n,\ell-1}$ and $\mathcal{Q}_{\ell-1,2}$ are random, so \eqref{pkoopkq} is not a trivial result from the classical law of iterated expectations. Then, from \eqref{fjxisjflisdjflisj} and Lemma \ref{LemmaODTrandom}, we have
	\begin{align}
		\E_{\mathcal{Q}_{\ell-1,2}}\left(R_{\D_n,\ell}|T^r_{\D_n,\ell-1} \right) &= R_{\D_n,
			\ell-1}-\sum_{A\in\O_{\ell-1,2}}{w(A)\cdot \E_{\S^A}\left(\Delta_{A,q}(\hat{\theta}_{A},\hat{s}_{A})\right)}
		\nonumber \\
		&\leq R_{\D_n,
			\ell-1}-\sum_{A\in\O_{\ell-1,2}}{\frac{w(A)}{p}\cdot \E_{\S^A}\left(\Delta_{A,q}(\hat{\theta}_{A},\hat{s}_{A})|q=p\right)}
		\nonumber \\
		&\leq R_{\D_n,
			\ell-1}-c(p)\cdot\sum_{A\in\O_{\ell-1,2}}{w(A)\cdot  \frac{R^2(A)}{\|g_p\|_{TV}^2}}\nonumber\\
		&\leq R_{\D_n,
			\ell-1}-\frac{c(p)}{\|g_p\|_{TV}^2}\cdot\sum_{A\in\O_{\ell-1,2}:R(A)> 0}{w(A)\cdot R^2(A)},\label{koadk}
	\end{align}
	where $R(A)=\|\mathbb{Y}-\bar{Y}_{A}\|_{A}^2-\|\mathbb{Y}-g_p\|_{A}^2$ and $c(p)>0$.  Decompose $R_{\D_n,\ell-1}$ into two parts
$$R_{\D_n,\ell-1}^+:=\sum_{A\in\O_{\ell-1,1}:R(A)> 0}{w(A)R(A)} \  \text{and} \   R_{\D_n,\ell-1}^-:=\sum_{A\in\O_{\ell-1,1}:R(A)\le 0}{w(A)R(A)}$$ 
 satisfying $R_{\D_n,\ell-1}=R_{\D_n,\ell-1}^++R_{\D_n,\ell-1}^-$. By Jensen's inequality and the fact that $R(A)\le 0$ for any leaf $A$ of $T_{\D_n,\ell-1}$ which contains only one data point, we have
	\begin{equation}\label{dgiasydfuip}
		\sum_{A\in\O_{\ell-1,2}:R(A)> 0}{w(A)\cdot R^2(A)} \ge \left(\sum_{A\in\O_{\ell-1,2}:R(A)> 0}{w(A)\cdot R(A)}\right)^2=(R_{\D_n,\ell-1}^+)^2.
	\end{equation}
	Therefore, if $R_{\D_n,\ell-1}\ge 0$ then the combination of \eqref{koadk} and \eqref{dgiasydfuip} implies that
	\begin{align}
		\E_{\mathcal{Q}_{\ell-1,2}}\left(R_{\D_n,\ell}|T^r_{\D_n,\ell-1} \right) &\le  R_{\D_n,\ell-1}- \frac{c(p)}{\|g_p\|_{TV}^2}\cdot(R_{\D_n,\ell-1}^+)^2\nonumber\\
		&\le R_{\D_n,\ell-1}- \frac{c(p)}{\|g_p\|_{TV}^2}\cdot R_{\D_n,\ell-1}^2, \label{dsfqwe23}
	\end{align}
	where \eqref{dsfqwe23} is from $R_{\D_n,\ell-1}^+>R_{\D_n,\ell-1}\ge 0$. In conclusion, \eqref{dsfqwe23} implies that
	\begin{equation}\label{ojdfiasjdop}
		\E_{\mathcal{Q}_{\ell-1,2}}\left(R_{\D_n,\ell}|T^r_{\D_n,\ell-1} \right) \le R_{\D_n,\ell-1}- \frac{c(p)}{\|g\|_{TV}^2}\cdot \max{\{R_{\D_n,\ell-1},0\}}^2
	\end{equation}
	because $\E_{\mathcal{Q}_{\ell-1,2}}\left(R_{\D_n,\ell}|T^r_{\D_n,\ell-1} \right) \le R_{\D_n,\ell-1}$  holds no matter what the sign of $R_{\D_n,\ell-1}$ takes. By \eqref{pkoopkq} and Jensen's inequality again, taking expectation over $\Xi_{t_n}$ on both sides of \eqref{ojdfiasjdop} yields
	\begin{align}
		\E_{\ell}(R_{\D_n,\ell})&\le \E_{\ell-1}(R_{\D_n,\ell})-\frac{c(p)}{\|g_p\|_{TV}^2}\cdot \E_{t_n}\left(\max{\{R_{\D_n,\ell-1},0\}}^2\right)\nonumber\\
		&\leq \E_{\ell-1}(R_{\D_n,\ell-1})-\frac{c(p)}{\|g_p\|_{TV}^2}\cdot [\E_{t_n}\left(\max{\{R_{\D_n,\ell-1},0\}}\right)]^2\label{jjoijkq}
	\end{align}
	for each $1\le\ell\le \L_0$. Next, we complete the proof of \eqref{dhzuadhnilq} by considering the sign of $ \E_{\ell-1}(R_{\D_n,\ell-1})$ in the following two cases.
	
	\textbf{Case 1:} There is an $ \ell_0 $, with $1\le\ell_0\le \L_0$,  such that $\E_{\ell_0-1}{(R_{\D_n,\ell_0-1})}\le 0$. By checking \eqref{rtyrfht}, it is easy to know that
	\begin{equation}\label{poqqasd}
		\E_{\mathcal{Q}_{\ell-1,2}}\left(R_{\D_n,\ell}|T^r_{\D_n,\ell-1} \right) \le R_{\D_n,\ell-1}.
	\end{equation}
	By using \eqref{pkoopkq}, taking expectation over $\Xi_{t_n}$ on both sides of \eqref{poqqasd} implies that
	\begin{equation*}
		\E_{\ell}(R_{\D_n,\ell})\leq   \E_{\ell-1}(R_{\D_n,\ell-1})
	\end{equation*}
	for each $1\le\ell\le \L_0$. Therefore, we have
	\begin{equation}\label{poqaqasd}
		\E_{\L_0}(R_{\D_n,\L_0})\le\E_{\ell_0-1}(R_{\D_n,\ell_0-1})\le c(p)\cdot\frac{\|g_p\|^2_{TV}}{\lfloor\log_2{t_n}\rfloor+4}
	\end{equation}
	by the assumption of this case.
	
	\textbf{Case 2:} For each $1\le\ell\le \L_0$, we have $\E_{\ell-1}(R_{\D_n,\ell-1})> 0$. In this case, \eqref{jjoijkq} implies that
	\begin{equation}\label{dgakdhkaa}
		\E_{\ell}(R_{\D_n,\ell})\le \E_{\ell-1}(R_{\D_n,\ell-1})-\frac{c(p)}{\|g_p\|_{TV}^2}\cdot [\E_{\ell-1}\left({R_{\D_n,\ell-1}}\right)]^2
	\end{equation}
	for all $1\le\ell\le \L_0$ because $\E_{t_n}\left(\max{\{R_{\D_n,\ell-1},0\}}\right)\ge \E_{\ell-1}(R_{\D_n,\ell-1})>0$. When $\ell=1$, by \eqref{dgakdhkaa} it is easy to know that $\E_{1}(R_{\D_n,1})\le c(p)\frac{\|g_p\|_{TV}^2}{4}$. Using the above initial condition and \eqref{dgakdhkaa} again, we also have, 	by mathematical induction,
	\begin{equation}\label{iaudfghbukajk122}
		\E_{\L_0}{(R_{\D_n,\L_0})} \le c(p)\cdot \frac{\|g_p\|^2_{TV}}{\lfloor\log_2{t_n}\rfloor+4},
	\end{equation}
	
	In conclusion, \eqref{dhzuadhnilq} follows \eqref{poqaqasd} and \eqref{iaudfghbukajk122}, which implies that inequality \eqref{regpart2dd} is also true. By  \eqref{Mainlemmaformula1} and following Parts I and II in the proof of Theorem \ref{Theoremhusidhuioh}, it is easy to know that $\hat{m}^r_{n,\L}(x)$ is mean squared consistent. Then following the same arguments in Part III of that proof, we can also prove
	\begin{equation}\label{ODTrandomfinal1beforeprun}
		\E_{\mathcal{D}_{n},\Xi_{t_n-1}} \int|m^r_{n,\L}(x)-m(x)|^2d\mu(x)\to 0 \ \text{as}\ n\to 0.
	\end{equation}
	Finally, by   Jensen's inequality, we have
	$$
	\E_{\mathcal{D}_{n},\Xi_{t_n-1}} \int|m_{ODRF,B, n}(x)-m(x)|^2d\mu(x)\le \E_{\mathcal{D}_{n},\Xi_{t_n}} \int|m^r_{n,\L}(x)-m(x)|^2d\mu(x).
	$$
	This shows that \eqref{ODTrandomfinal1beforeprun} also holds for  the estimator  $m_{ODRF,B,n}(x),x\in[0,1]^p$ obtained by ODRF.
	\hfill\(\Box\)

\section{Proof of Theorem \ref{full tree consistency}}
 As specified in paper, we only need to  bound $Part_{n,1},Part_{n,2}$ that are defined in the proof sketch. We will finish the corresponding arguments in four steps. During the first step, we show that $Part_{n,1}\to 0$ and in  next three steps we aim to prove $Part_{n,2}\to 0$.

\subsection{Step 1}
The term $Part_{n,1}$ is bounded as follows. Let $Z_{j,n}:= m^{r,b}_{n,a_n,a_n,\L}(X)$. Then, we have two observations below. Firstly, $Z_{1,n},\ldots, Z_{B_n, n}$ are i.i.d. given $\D_n$ and $X$. Secondly, $|Z_{1,n}|\le \max_{1\le i\le n}{|\varepsilon_i|}+\|m\|_\infty$, where $\|\cdot\|_\infty$ is the supremum norm. The above two pieces of observations lead that
\begin{align}
    Part_{n,1}&= \E \E[\left(m_{ODRF,B_n,n}(X)-\E_{\Theta}(m^{r}_{a_n}(X))\right)^2|\D_n, X]\nonumber\\
    &= \E\E\left[ \left(\frac{1}{B_n}\sum_{b=1}^{B_n} (Z_{b,n}-\E_{\Theta_b}(Z_{b,n}))\right)^2| \D_n, X \right] \nonumber\\
    &= \frac{1}{B_n}\cdot \E\E[(Z_{1,n}-\E_{\Theta_1}(Z_{1,n}))^2|\D_n, X]\nonumber\\
    &\le \frac{4}{B_n}\cdot \left[\E(\max_{1\le i\le n}{|\varepsilon_i^2|}) +\|m\|_\infty^2\right]. \label{shhbsJNber}
\end{align}

By $\E(\max_{1\le i\le n}{|\varepsilon_i^2|})\le c\cdot \ln n$ and \eqref{shhbsJNber}, we know
$$
Part_{n,1}\le \frac{4}{B_n}\cdot (\ln n+\|m\|_\infty^2),
$$
which goes to $0$ as $n\to\infty$ under our assumptions.

\subsection{Step 2}
The second step is to show Lemma \ref{key general function} is true. For completeness, we restate this result below.

\

\noindent \textbf{Lemma 4.4} Let $ \bm{A}_*\subseteq [0,1]^p$ be a  node satisfying $\Delta_{\bm{A}_*}(\theta,s)\equiv 0$ for any $\theta$ and $s$. Then, the regression
function $m(x)=\E(Y|X=x),x\in [0,1]^p$ is constant on $\bm{A}_*$.

\begin{proof}
Without loss of generality, we assume that $m(x)$ is always positive on $\bm{A}_*$. Otherwise, one can replace $m(x)$ by the function $m(x)+\sup_{x\in [0,1]^p}{|m(x)|}+1$. For any Lebesgue measurable set $A\subseteq \bm{A}_*$, we define a function
    $$
        G(A):=\frac{\int_A{m(x)dx}}{\int_{\bm{A}_*}{m(x)dx}}.
    $$
    Since $\int_{\bm{A}_*}{m(x)dx}>0$, $G(A)$ is well defined. It is easy to check that $G(A)$ is a probability measure on the measurable space $(\bm{A}_*, \mathcal{B}({\bm{A}_*}))$, where  $\mathcal{B}({\bm{A}_*})$ is the class of Lebesgue measurable sets contained in
    $\bm{A}_*$. Next we calculate $\int_A{m(x)dx}$ for node $A$ which takes the form
    $\{x\in \bm{A}_*: \theta^T x\le s\}$. Such calculation is based on the condition that $\Delta_{\bm{A}_*}(\theta,s)\equiv 0$ for any $\theta$ and $s$.
    
    Let $X_{\bm{A}_*}$ be the restriction of $X$ on $\bm{A}_*$. Namely, we have
    $$
    \P(X_{\bm{A}_*}\in A)=\frac{\P(X\in A)}{\P(X\in \bm{A}_*)}, \ \ \forall A\in \mathcal{B}({\bm{A}_*}).
    $$
    By some simple analysis, we can verify
    \begin{align*}
        Var(Y|X\in \bm{A}_*)&=Var(m(X)|X\in \bm{A}_*)= Var(X_{\bm{A}_*}),\\
        \P(\theta^TX\le s|X\in \bm{A}_*)&=\P(\theta^T X_{\bm{A}_*}\le s),\\
        Var(Y|X\in \bm{A}_*, \theta^T X_{\bm{A}_*}\le s)&=Var(m(X)|X\in \bm{A}_*, \theta^T X_{\bm{A}_*}\le s)\\
        &=Var(m(X_{\bm{A}_*})|\theta^T X_{\bm{A}_*}\le s).
    \end{align*}
    Using the above equations, the rule of population CART can be calculated in the following way.
    \begin{align*}
        \Delta_{\bm{A}_*}(\theta,s):&= Var(Y|X\in A)- \P(\theta^TX\le s|X\in \bm{A}_*)Var(Y|X\in A, \theta^TX\le s) \\
        &- \P(\theta^TX > s|X\in \bm{A}_*)Var(Y|X\in A, \theta^TX> s)\\
        &= Var(m(X_{\bm{A}_*}))-\P(\theta^T X_{\bm{A}_*}\le s)Var(m(X_{\bm{A}_*})|\theta^TX_{\bm{A}_*}\le s)\\
        &- \P(\theta^T X_{\bm{A}_*}> s)Var(m(X_{\bm{A}_*})|\theta^TX_{\bm{A}_*}> s)\\
        &= Var(m(X_{\bm{A}_*}))- \E(Var(m(X_{\bm{A}_*})|\theta^TX_{\bm{A}_*}\le s))\\
        &= Var(\E(m(X_{\bm{A}_*})|\theta^TX_{\bm{A}_*}\le s)),
    \end{align*}
where in the last line we use the law of total variance. Note that $\E(m(X_{\bm{A}_*})|\theta^TX_{\bm{A}_*}\le s)$ is a Bernoulli random variable, whose p.d.f is
\begin{equation*}
  \E(m(X_{\bm{A}_*})|\theta^TX_{\bm{A}_*}\le s) =
    \begin{cases}
      \frac{1}{S((\bm{A}_*)_{\theta,s}^+)}\int_{(\bm{A}_*)_{\theta,s}^+} {m(x)dx}, & \P(\theta^T X_{\bm{A}_*}\leq s )\\
    \frac{1}{S((\bm{A}_*)_{\theta,s}^-)}\int_{(\bm{A}_*)_{\theta,s}^-} {m(x)dx}, & \P(\theta^T X_{\bm{A}_*}> s ),
    \end{cases}
\end{equation*}
    where $S(A)$ denotes the Lebesgue measure of any $A\in \mathcal{B}({\bm{A}_*})$ and $(\bm{A}_*)_{\theta,s}^+, (\bm{A}_*)_{\theta,s}^-$ are two daughters of $\bm{A}_*$ (see definitions in Section \ref{sec:notations}). First, we consider the case where  $0<\P(\theta^T X_{\bm{A}_*}\leq s )<1$. Recall our condition that $\Delta_{\bm{A}_*}(\theta,s)\equiv 0$ for any $\theta$ and $s$. Such a condition implies that in this case, we have
$$
    \frac{1}{S((\bm{A}_*)_{\theta,s}^+)}\int_{(\bm{A}_*)_{\theta,s}^+} {m(x)dx}= \frac{1}{S((\bm{A}_*)_{\theta,s}^-)}\int_{(\bm{A}_*)_{\theta,s}^-} {m(x)dx},
$$
which implies
\begin{equation}\label{HKJHKJHberno}
    \int_{(\bm{A}_*)_{\theta,s}^+} {m(x)dx}= S((\bm{A}_*)_{\theta,s}^+)\bar{X}_{\bm{A}_*}, \ \
    \bar{X}_{\bm{A}_*}:= \frac{1}{S(\bm{A}_*)} \int_{\bm{A}_*}{m(x)dx},
\end{equation}
where $\bar{X}_{\bm{A}_*}$ denotes the average of $m(x)$ in $\bm{A}_*$. If $\P(\theta^T X_{\bm{A}_*}\leq s )=1$, then $(\bm{A}_*)_{\theta,s}^+=\bm{A}_*$ almost surely in Lebesgue measure. It is obvious that  \eqref{HKJHKJHberno} holds in this second case. With the similar argument, we know \eqref{HKJHKJHberno} is also true if $\P(\theta^T X_{\bm{A}_*}\leq s )=0$. In conclusion, \eqref{HKJHKJHberno} holds for whatever  value of $\P(\theta^T X_{\bm{A}_*}\leq s )$.

Suppose $Z$ is a random vector defined on $\bm{A}_*$ whose law is $G(A), A\in \mathcal{B}({\bm{A}_*})$.  For any $\theta\in\R^p$, $\theta^TZ$ is a random variable with the distribution function
$$
F_{\theta^TZ}(s)=\P(\{ Z\in \bm{A}_*: \theta^TZ\leq s\})=\frac{S((\bm{A}_*)_{\theta,s}^+)}{S(\bm{A}_*)}
$$
by using inequality  \eqref{HKJHKJHberno}. Suppose $U$ is another random vector defined in $\bm{A}_*$, which has a uniform distribution.  Then, $\theta^TU$ is a random variable with the distribution function
$$
F_{\theta^TU}(s)=\P(\{ U\in \bm{A}_*: \theta^TU\leq s\})=\frac{S((\bm{A}_*)_{\theta,s}^+)}{S(\bm{A}_*)}
$$
for any $\theta\in\R^p$ and $s\in\R$. Therefore, we have
$$\E(e^{i\theta^TU})=\E(e^{i\theta^TZ}), \forall \theta\in\R^p.$$
In other words, the characteristic functions of $U$ and $Z$ are the same, indicating that $Z$ has the same distribution with $U$, see for example Theorem 1.6 in \cite{shao2003mathematical}. Thus, for any  $A\in \mathcal{B}({\bm{A}_*})$,
$$
  G(A)=\P(U\in A)= \frac{S(A)}{S(\bm{A}_*)}=\frac{\int_A{m(x)dx}}{\int_{\bm{A}_*}{m(x)dx}}.
$$
The above inequality indicates that $\frac{1}{\int_{\bm{A}_*}{m(x)dx}}\cdot m(x), x\in \bm{A}_*$ is a density function of $U$. In conclusion,
\begin{equation}\label{dhbjkdkjasnjansbern}
    m(x)= \int_{\bm{A}_*}{m(x)dx}\cdot \frac{1}{S(\bm{A}_*)}, \ \ \forall x\in \bm{A}_*\ \ a.s..
\end{equation}
Since $m(x)$ is assumed to be continuous in $[0,1]^p$, \eqref{dhbjkdkjasnjansbern} also holds for any $x\in \bm{A}_*$. 	
\end{proof}

\subsection{Step 3}

The third step is to show that the cuts based on $\Delta_A^*(\theta,s)$ and $\Delta_A(\theta,s)$  are close to each other.  We refer to pages 12 \& 14 in \cite{scornet2015consistency} for the notations below. In the following analysis, $d=(\theta,s)$ is used to denote a cut corresponding with the hyperplane $\theta^Tx=s$, where $\theta\in \Theta^p, $ and $ s\in [-\sqrt{p},\sqrt{p}]$. For any $x\in [0,1]^p$, we call $\A_k(x)$ the class of all possible $k\ge 1$
consecutive cuts used to construct the convex polytope containing $x$. In other words, the above
mentioned polytope is obtained after  a sequence of cuts $\bm{d}_k=(d_1,\ldots,d_k)$. For any $\bm{d}_k\in \A_k(x)$, let $A(x,\bm{d}_k)$ be the polytope containing $x$ that is built by using $\bm{d}_k$. Then, the distance between two cuts $\bm{d}_k$ and $\bm{d}_k'$ is defined by
$$
\|\bm{d}_k-\bm{d}_k'\|_\infty:= \sup_{1\le j\le k}\max\{ \|\theta_j-\theta_j'\|_2,|s-s'|\}.
$$
For any $x\in [0,1]^p$ and $\bm{d}_k\in \A_k(x)$, we define rule
\begin{align*}
  &\Delta_{n,k}(x,\bm{d}_k): = \frac{1}{N(A(x,\bm{d}_{k-1}))}\sum_{i=1}^n{(Y_i-\bar{Y}_{A(x,\bm{d}_{k-1})})^2\mathbb{I}(X_i\in A(x,\bm{d}_{k-1}))}\\
  &- \frac{1}{N(A(x,\bm{d}_{k-1}))}\sum_{i=1}^n{(Y_i-\bar{Y}_{A_L(x,\bm{d}_{k-1})}\mathbb{I}(X_i\in A_L(x,\bm{d}_{k-1})))^2\mathbb{I}(X_i\in A_L(x,\bm{d}_{k-1}))}\\
  &- \frac{1}{N(A(x,\bm{d}_{k-1}))}\sum_{i=1}^n{(Y_i-\bar{Y}_{A_R(x,\bm{d}_{k-1})}\mathbb{I}(X_i\in A_L(x,\bm{d}_{k-1})))^2\mathbb{I}(X_i\in A_R(x,\bm{d}_{k-1}))},
\end{align*}
where  $A_L(x,\bm{d}_{k-1}):= A(x,\bm{d}_{k-1})\cap \{z:\theta_k^Tz\le s_k\}$ and  $A_R(x,\bm{d}_{k-1}):= A(x,\bm{d}_{k-1})\cap \{z:\theta_k^Tz > s_k\}$ are two daughters of $A(x,\bm{d}_{k-1})$. Actually, $ \Delta_{n,k}(x,\bm{d}_k)$ is the CART rule which will be used to find the best cut $\bm{d}_k$ in the polytope $A(x,\bm{d}_{k-1})$. Similar to \cite{scornet2015consistency}, $A^\xi(x,\bm{d}_{k-1})\subseteq \A_{k-1}(x)$ denotes the set of all $\bm{d}_{k-1}$ such that $A(x,\bm{d}_{k-1})$ contains a hypercube of edge length $\xi>0$.
Finally, define $\bar{A}^\xi(x):=\{\bm{d}_k:\bm{d}_{k-1}\in A^\xi(x,\bm{d}_{k-1})$ that is equipped with the norm $\|\bm{d}_k\|_\infty$.

\begin{lemma}\label{equicontinuous lemma}
    Fix $x\in [0,1]^p$, $k\in \mathbb{Z}_+$ and suppose $\xi>0$. Then $\Delta_{n,k}(x,\cdot)$ is stochastically equicontinuous on $\bar{\A}_k^\xi(x)$; that is,
    for all $\alpha,\rho>0$, there exists $\delta>0$ such that
    \begin{equation}\label{aGBJHKkjajdnadjknBernn}
     \lim_{n\to\infty} \P\Big( \sup_{\substack{\|\bm{d}_k-\bm{d}_k'\|_\infty\le\delta \\ \bm{d}_k,\bm{d}_k'\in\bar{\A}_k^\xi(x)}} |\Delta_{n,k}(x,\bm{d}_k)-\Delta_{n,k}(x,\bm{d}_k')|>\alpha  \Big)\le \rho.
    \end{equation}
\end{lemma}

\begin{proof}
   The proof is long and technical. We defer it to Section \ref{red}.
\end{proof}

\subsection{Step 4}

Now we give the proof of $L^2$ consistency for $\E_{\Theta}(m^{r}_{a_n}(X))$. Suppose $A_n(X,\Theta)$ is one of terminal nodes of fully grown random ODT $T_{\D_n(\mathfrak{G}),a_n,a_n,\L}^r$ that contains the data point $X$. By Lemma \ref{key general function}, Lemma \ref{equicontinuous lemma},  Lemma 3 in \cite{scornet2015consistency} which also works for ODRF and the proof of Proposition 2 in \cite{scornet2015consistency}, we know for all $\rho,\xi>0$, there exists $N\in\mathbb{Z}_+$ such that $\forall n>N$,
\begin{equation}\label{final equantion}
  \P\left( \sup_{x,x'\in A_n(X,\Theta)}{|m(x)-m(x')|\le \xi}\right)\ge 1-\rho.
\end{equation}
We will use inequality \eqref{final equantion}  to bound the approximation error of $\E_{\Theta}(m^{r}_{a_n}(X))$. Following arguments about the consistency of fully grown RF, which is given  in Section 5.4 in  \cite{scornet2015consistency}, all their results can be adopted here except bounding 
$$
I_n':= \E\Big[  \sum_{\substack{i,j\\ i\neq j}}{\mathbb{I}_{X \Theta X_i}\mathbb{I}_{X \Theta' X_j}(Y_i-m(X_i))(Y_j-m(X_j)}\Big],
$$
which occurs in the estimation error  of $\E_{\Theta}(m^{r}_{a_n}(X))$. If assumption $A.2$ holds, there is nothing new. Here, we only complete the proof if assumption $A.1$ is true. In the case of ODRF, the technique of \textit{layered nearest neighbors} in \cite{scornet2015consistency} can not be applied. For example, consider a simple case where both $X_i=(0,0,\ldots,0)$ and $X=(1,1,\ldots,1)$ locate in the same partition. When $n\ge 3$, $X_i$ can never be the layered nearest neighbor of $X$ with probability $1$. Therefore, it is possible that for any $i=1,\ldots,n$, $X$ is located in the partition that contains point $X_i$. This indicates summands in $I_n'$ for ODRF may not be sparse anymore. Meanwhile, more careful calculations are required in order to bound $I_n'$. Let $\D_n(\Theta)\subseteq\{X_1,X_2,\ldots,X_n\}$ be the sampled data given $\Theta$ and $\varepsilon_i=Y_i-m(X_i),$  $i=1,\ldots,n$. Then,
\begin{align*}
    &I_n'=  \E\Big[  \sum_{\substack{i,j\\ i\neq j}}{\mathbb{I}_{X \Theta X_i}\mathbb{I}_{X \Theta' X_j}\mathbb{I}_{X_i\in \D_n(\Theta)}\mathbb{I}_{X_j\in \D_n(\Theta')}\varepsilon_i\varepsilon_j}\Big]\\
    &= \E\Big[  \sum_{\substack{i,j\\ i\neq j}}{\mathbb{I}_{X_i\in \D_n(\Theta)}\mathbb{I}_{X_j\in \D_n(\Theta')}\varepsilon_i\varepsilon_j\E(\mathbb{I}_{X \Theta X_i}\mathbb{I}_{X \Theta' X_j}|X,\Theta,\Theta',X_1,\ldots,X_n,Y_i,Y_j)}\Big]\\
    &=\E\Big[  \sum_{\substack{i,j\\ i\neq j}}{\mathbb{I}_{X_i\in \D_n(\Theta)}\mathbb{I}_{X_j\in \D_n(\Theta')}\varepsilon_i\varepsilon_j\psi_{i,j}(Y_i,Y_j)}\Big]\\
    &= \E\Big[  \sum_{\substack{i,j\\ i\neq j}}{\mathbb{I}_{X_i\in \D_n(\Theta)}\mathbb{I}_{X_j\in \D_n(\Theta')}\varepsilon_i\varepsilon_j\psi_{i,j}}\Big]+ \E\Big[  \sum_{\substack{i,j\\ i\neq j}}{\mathbb{I}_{X_i\in \D_n(\Theta)}\mathbb{I}_{X_j\in \D_n(\Theta')}\varepsilon_i\varepsilon_j(\psi_{i,j}(i,j)-\psi_{i,j})}\Big]\\
    &=I_{n,1}'+I_{n,2}'.
\end{align*}

Conditional on $X,\Theta, \Theta',X_1,\ldots X_n$, we know $I_{n,1}'=0$ because $\varepsilon_i$ and $\varepsilon_j$ are independent when these  variables are given. Let $w_{i,j}:=\psi_{i,j}(i,j)-\psi_{i,j}$. By the law of total expectation and Jensen's inequality, 
\begin{align}
    |I_{n,2}'|^2 &\le \E\Big[| \E( \sum_{\substack{i,j\\ i\neq j}}{\varepsilon_i\varepsilon_j\mathbb{I}_{X_i\in \D_n(\Theta)}\mathbb{I}_{X_j\in \D_n(\Theta')}\cdot   w_{i,j} }\mathbb{I}_{X_i\in \D_n(\Theta)}\mathbb{I}_{X_j\in \D_n(\Theta')}|\Theta,\Theta')|^2\Big]\nonumber\\
    &\le  \E\Big[ \E( \sum_{\substack{X_i\in \D(\Theta),X_j\in\D(\Theta')\\ i\neq j}}{\varepsilon_i^2\varepsilon_j^2| \Theta,\Theta')\cdot \E( \sum_{\substack{i\in \D(\Theta),j\in\D(\Theta')\\ i\neq j}} w_{i,j}^2}|\Theta,\Theta')\Big]. \label{dhjhjBer}
\end{align}
Then, we focus on bounding the two conditional expectations above. Given realizations $\Theta=\vartheta$ and $ \Theta'=\vartheta'$, it is known that both $\D(\vartheta) $ and $ \D(\vartheta')$ are determined. Without loss of generality, we can assume  $\D(\vartheta)=\D(\vartheta')=\{X_1,\ldots, X_{a_n}\}$ below. Note that
\begin{equation}\label{bdhbfbjJNKNbnfnmBer}
    \E( \sum_{\substack{X_i\in \D(\vartheta),X_j\in\D(\vartheta')\\ i\neq j}}{\varepsilon_i^2\varepsilon_j^2| \Theta=\vartheta,\Theta'=\vartheta') }= \E( \sum_{\substack{i,j=1\\
    i\neq j}}^{a_n}{\varepsilon_i^2\varepsilon_j^2}) 
    \le a_n^2.
\end{equation}
By \eqref{dhjhjBer} and \eqref{bdhbfbjJNKNbnfnmBer}, 
\begin{equation}\label{hbsbfkjJKJber}
    |I_{n,2}'|^2\le a_n^2\cdot \E_{\Theta,\Theta'} \E( \sum_{\substack{i\in \D(\Theta),j\in\D(\Theta')\\ i\neq j}} w_{i,j}^2|\Theta,\Theta')= a_n^2\cdot \E( \sum_{\substack{i\in \D(\Theta),j\in\D(\Theta')\\ i\neq j}} w_{i,j}^2).
\end{equation}
For any realizations $\vartheta $ and $ \vartheta'$, the number of pairs in $\{(i,j): i\neq j, i\in\D(\vartheta), j\in\D(\vartheta')\}$ does not exceed $a_n^2$. According to this fact and assumption A.1, \eqref{hbsbfkjJKJber} implies 
$$
 |I_{n,2}'|^2\le a_n^4\cdot \E(\max_{\substack{i,j\\i\neq j}}{w_{i,j}^2})\le \frac{c^2}{a_n^{2\delta}},
$$
which converges to $0$ as $a_n\to\infty$. This completes the proof. \hfill\(\Box\)


\section{Proof of Lemma  \ref{equicontinuous lemma}}\label{red}

 We will first consider a simple case with $k=1 $ and $ p=2$. Then, we will find that the arguments for the other cases are similar to this simple case. Our goal is to choose a $\delta>0$ such that \eqref{aGBJHKkjajdnadjknBernn} holds. Now we have two cuts denoted by $\bm{d}_1=(\theta_1,s_1)$ and $\bm{d}_1'=(\theta_2,s_2)$ satisfying $\max\{ \|\theta_1-\theta_2\|_2,|s_1-s_2|\}\le \delta$. Denote by $\mathfrak{R}(\bm{d}_1,\bm{d}_1',\delta)$ the rectangle with the smallest area which contains points lying on $\theta_1^Tz=s_1, z\in [0,1]$ or $\theta_2^Tz=s_2, z\in [0,1]$. Then, it is not difficult to see that
    \begin{equation}\label{Areadelta}
        Area(\delta):=\sup\limits_{{ \bm{d}_1, \bm{d}_1',\|\bm{d}_1-\bm{d}_1'\|_\infty \le \delta}}{S(\mathfrak{R}(\bm{d}_1,\bm{d}_1',\delta))}\to 0
    \end{equation}
     as $\delta\to 0$.  In this simple case, $\bar{\A}_1^\xi(x)=[0,1]^p$ for any $x\in \bar{\A}_1^\xi(x)$. To prepare our arguments, we need the following three preliminary results.
    
    First, there exists $N_1\in\mathbb{Z}_+$ and $ c(\rho)>0$ such that for all $n>N_1$,
    \begin{equation}\label{eq1}
        \max_{1\le i\le n}{|\varepsilon_i|}\leq c(\rho)\sqrt{\ln n}
    \end{equation}
    holds with probability at least $1-\rho$.
    Second, denote  $\mathscr{F}$ by the class of all subsets of $[0,1]^p$. Note that there are at most $n^2$ sets taking the form $\{i:X_i\in F\}$ for $F\in \mathscr{F}$. Therefore, for any $\delta>0$ there exists $N_2(\delta)\in\mathbb{Z}_+$ such that for all $n>N_2(\delta)$ and $F\in\mathscr{F}$ satisfying $N(F)>\sqrt{n}$ such that
    \begin{equation}\label{eq2}
        \left|\frac{1}{N(F)}\sum_{i:X_i\in F}{\varepsilon_i}\right|\leq \frac{\alpha}{4}\sqrt{Area(\delta)}
    \end{equation}
    holds with probability at least $1-\rho$.
    Third, we prove a uniform error bound for approximating $\P(X\in A(x,\bm{d}_k))$ through the  empirical process method. Define a class of sets
    $$
       \mathscr{F}_{k_1,k_2}:=\Bigg\{[0,1]^p\bigcap_{j=1}^{k_1}\{z: \theta_{j,1}^Tz\le s_{j,1} \}\bigcap_{j=1}^{k_2}\{z:\theta_{j,2}^Tz>s_{j,2}\}: \theta_{j,1},\theta_{j,2}\in \R^p, s_{j,1},s_{j,2}\in \R \Bigg\},
    $$
    where $k_1,k_2\in \mathbb{Z}_+\cup \{0\}$. Then, we know
    $$
       A(x,\bm{d}_k)\in \mathscr{F}_{k}:= \bigcup_{\substack{k_1,k_2\in\mathbb{Z}_+\cup\{0\}\\k_1+k_2=k}} \mathscr{F}_{k_1,k_2}
    $$
    for any $x\in [0,1]^p$ and any cut $\bm{d}_k$. Next we bound the uniform error of approximating expectation of the indicator functions in $\mathscr{G}_k:= \{\mathbb{I}(z\in F): F\in \mathscr{F}_{k}\}.$ In the first step, we show $\mathscr{G}_k$ is a VC class. By Lemma 9.12 $(i)$ in \cite{kosorok2008introduction}, we  thus know that either $\mathscr{F}_{1,0}$ or $\mathscr{F}_{0,1}$ has VC dimension $p+1$. According to Lemma 9.7 in \cite{kosorok2008introduction}, we know $\mathscr{F}_{k}$ is a VC class with dimension no larger than $k(k+1)(p+1)$. Then,
    by following the standard arguments in Example 4.8 in \cite{sen2018gentle}, we know
    \begin{equation}\label{eq3.1}
        \E\left( \sup_{g\in \mathscr{G}_k}{|(\mathbb{P}_n-P)g|}\right)\leq c\sqrt{\frac{k(k+1)(p+1)}{n}},
    \end{equation}
    where $\mathbb{P}_n(g)=\frac{1}{n}\sum_{j=1}^n{g(X_j)}$ and $P(g)=\E(g(X))$ stand for the operators of empirical and population expectation respectively. On the other hand, it is easy to check that $\sup_{g\in \mathscr{G}_k}{|(\mathbb{P}_n-P)g|}$
    is a  function of $(X_1,\ldots, X_n)$ and has the bounded differences property for constants $1/n$'s (see page 56 in \cite{boucheron2013concentration}). Therefore, the application of McDiarmid’s inequality leads that for any $t>0$,
    \begin{equation}\label{eq3.2}
        \P\left(  \sup_{g\in \mathscr{G}_k}{|(\mathbb{P}_n-P)g|}- \E\left( \sup_{g\in \mathscr{G}_k}{|(\mathbb{P}_n-P)g|}\right)>t\right)\le e^{-2nt^2}.
    \end{equation}
   Finally, the combination of \eqref{eq3.1} and \eqref{eq3.2} implies that with probability larger than $1-\rho$,
   $$
      \sup_{g\in \mathscr{G}_k}{|(\mathbb{P}_n-P)g|}\leq c_1\sqrt{\frac{k(k+1)(p+1)}{n}}+
      c_2\sqrt{\frac{1}{2n}\log\frac{1}{\rho}}
   $$
   for some $c_1,c_2>0$. In conclusion, for any $\delta>0$ there exists $N_3(\delta)\in\mathbb{Z}_+$ such that for all
   $n>N_3(\delta)$ and all $A(x,\bm{d}_k)$,
   \begin{equation}\label{eq3}
       (S(A(x,\bm{d}_k))-Area^2(\delta))n\le N(A(x,\bm{d}_k))\le (S(A(x,\bm{d}_k))+Area^2(\delta))n
   \end{equation}
   holds with probability at least $1-\rho$.

   The following analysis of the simple case is carried out on the event where equations \eqref{eq1}, \eqref{eq2} and \eqref{eq3}  all hold. We divide all cases of $\bm{d}_1$ (or $\bm{d}_1'$) into two groups:
   \begin{align*}
       Area_1([0,1]^p): &= \{(\theta,s): S(A_{\theta,s}^+)\le Area(\delta)\ \text{or}\ S(A_{\theta,s}^-)\le Area(\delta)\} \\
       Area_2([0,1]^p): &= \{(\theta,s): S(A_{\theta,s}^+)> Area(\delta)\ \text{and}\ S(A_{\theta,s}^-)> Area(\delta)\},
   \end{align*}
 where $ Area(\delta)$ is defined in \eqref{Areadelta}.
Let $\bm{d}_1=(\theta_1,s_1)$ and $\bm{d}'=(\theta_2,s_2)$. We have three cases of locations of $\bm{d}_1, \bm{d}_1'$:  (i) $\bm{d}_1, \bm{d}_1'\in Area_2([0,1]^p)$; (ii) $\bm{d}_1, \bm{d}_1'\in Area_1([0,1]^p)$; (iii) One of them is in $Area_1([0,1]^p)$ and the other is in $Area_2([0,1]^p)$. We only study the first case in the proof and the arguments of the other two cases are similar to the first one. In fact, there are two different situations in the case 1: (a). $\bm{d}_1$ does not intersect with $\bm{d}_1'$ in $[0,1]^p$; (b). $\bm{d}_1$ intersects with $\bm{d}_1'$ in $[0,1]^p$. The situation (a) is similar to the {first case} in the {proof of Lemma 2} in \cite{scornet2015consistency}. So we only give the proof for situation (b).
   \begin{figure}[ht]
       \centering
       \includegraphics[width=0.4\linewidth]{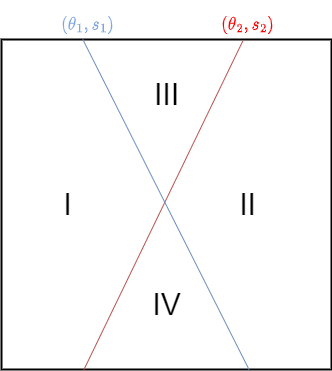}
       \caption{This is an example of the situation (b). Here, $\bm{d}_1=(\theta_1,s_1)$ divides $[0,1]^p$ into two parts where the left part $A_{L,1}= I\cup IV$ and the right part $A_{R,1}= II\cup III$, while cut $\bm{d}_1'=(\theta_2,s_2)$ divides $[0,1]^p$ into another two parts where the left one $A_{L,2}= I\cup III$ and the right one $A_{R,2}= II\cup IV$.}
       \label{fig:2}
   \end{figure}

    As illustrated in  Figure \ref{fig:2}, we can rewrite $\Delta_{n,1}(\theta_1,s_1)-\Delta_{n,1}(\theta_2,s_2)$ in the following way:
   \begin{align*}
      \Delta_{n,1}(\theta_1,s_1)-\Delta_{n,1}(\theta_2,s_2)&= \frac{1}{n}\sum_{i:X_i\in A_{L,1}}{(Y_i-\bar{Y}_{A_{L,1}})^2}+\frac{1}{n}\sum_{i:X_i\in A_{R,1}}{(Y_i-\bar{Y}_{A_{R,1}})^2}\\
        &-\frac{1}{n}\sum_{i:X_i\in A_{L,2}}{(Y_i-\bar{Y}_{A_{L,2}})^2}-\frac{1}{n}\sum_{i:X_i\in A_{R,2}}{(Y_i-\bar{Y}_{A_{R,2}})^2}\\
        &=\left[\frac{1}{n}\sum_{i:X_i\in I}{(Y_i-\bar{Y}_{A_{L,1}})^2}-\frac{1}{n}\sum_{i:X_i\in I}{(Y_i-\bar{Y}_{A_{L,2}})^2}\right]\\
        &+ \left[\frac{1}{n}\sum_{i:X_i\in II}{(Y_i-\bar{Y}_{A_{R,1}})^2}-\frac{1}{n}\sum_{i:X_i\in II}{(Y_i-\bar{Y}_{A_{R,2}})^2}\right]\\
        &+ \Bigg[\frac{1}{n}\sum_{i:X_i\in IV}{(Y_i-\bar{Y}_{A_{L,1}})^2}+\frac{1}{n}\sum_{i:X_i\in III}{(Y_i-\bar{Y}_{A_{R,1}})^2}\\
        &-\frac{1}{n}\sum_{i:X_i\in III}{(Y_i-\bar{Y}_{A_{L,2}})^2}-\frac{1}{n}\sum_{i:X_i\in IV}{(Y_i-\bar{Y}_{A_{R,2}})^2}\Bigg]\\
        &:=J_1+J_2+J_3.
   \end{align*}
   First, we bound $|J_1|$ in the following way.
   \begin{align*}
       |J_1|:&=\left| \frac{1}{n}\sum_{i:X_i\in I}{(Y_i-\bar{Y}_{A_{L,1}})^2}-\frac{1}{n}\sum_{i:X_i\in I}{(Y_i-\bar{Y}_{A_{L,2}})^2}\right|\\
       &=\left| (\bar{Y}_{A_{L,1}}-\bar{Y}_{A_{L,2}})\cdot \frac{2}{n}\sum_{i:X_i\in I}{\left(Y_i-\frac{\bar{Y}_{A_{L,1}}+\bar{Y}_{A_{L,2}}}{2}\right)}\right|\\
       &= |\bar{Y}_{A_{L,1}}-\bar{Y}_{A_{L,2}}| \cdot \left| \frac{2}{n}\sum_{i:X_i\in I}{\left(Y_i-\frac{\bar{Y}_{A_{L,1}}+\bar{Y}_{A_{L,2}}}{2}\right)} \right|\\
       &:= J_4\times J_5.
   \end{align*}
   Let us find an upper bound for $J_4$. Note that
   \begin{align*}
       |J_4|&= \left| \frac{1}{N(A_{L,1})}\sum_{i:X_i\in A_{L,1}}{Y_i}- \frac{1}{N(A_{L,2})}\sum_{i:X_i\in A_{L,2}}{Y_i}\right|\\
       &= \left| \frac{1}{N(A_{L,1})}\sum_{i:X_i\in I}{Y_i}+\frac{1}{N(A_{L,1})}\sum_{i:X_i\in IV}{Y_i} - \frac{1}{N(A_{L,2})}\sum_{i:X_i\in I}{Y_i}-\frac{1}{N(A_{L,2})}\sum_{i:X_i\in III}{Y_i}\right|\\
       &\leq \left| \frac{1}{N(A_{L,1})}\sum_{i:X_i\in I}{Y_i}-\frac{1}{N(A_{L,2})}\sum_{i:X_i\in I}{Y_i}\right|+\left| \frac{1}{N(A_{L,1})}\sum_{i:X_i\in IV}{Y_i}- \frac{1}{N(A_{L,2})}\sum_{i:X_i\in III}{Y_i}\right|\\
       &\le \left|1-\frac{N(A_{L,1})}{N(A_{L,2})} \right|\frac{1}{N(A_{L,1})}\left|\sum_{i:X_i\in I}{Y_i}\right|+\left(\frac{1}{N(A_{L,1})}+\frac{1}{N(A_{L,2})}\right)\left|\sum_{i:X_i\in III\cup IV}{Y_i}\right|\\
       &:=J_6+J_7.
   \end{align*}
   We bound $J_6$ by observing that $III\cup IV$ is contained in the rectangle $\mathfrak{R}(\bm{d}_1,\bm{d}_1',\delta)$, whose area is no larger than $Area(\delta):=\sup\limits_{{ \bm{d}_1, \bm{d}_1',\|\bm{d}_1-\bm{d}_1'\|_\infty \le \delta}}{S(\mathfrak{R}(\bm{d}_1,\bm{d}_1',\delta))}$. Meanwhile, we can always have $0<Area(\delta)<1$ by choosing $\delta$ small enough. Applying
   \eqref{eq3} for $\mathfrak{R}(\bm{d}_1,\bm{d}_1',\delta)$, the above argument implies that
   \begin{align}
       \left|1-\frac{N(A_{L,1})}{N(A_{L,2})} \right|&\le \left|\frac{N(III\cup IV)}{N(A_{L,2})}\right|\le \frac{Area(\delta)+Area^2(\delta)}{\sqrt{Area(\delta)}-Area^2(\delta)}\nonumber\\
       &\le 4\sqrt{Area(\delta)}. \label{dhjxahkdjlbern}
   \end{align}
   On the other hand, we also have
   \begin{align}
       N(I)&=N(A_{L,1})-N(IV) \nonumber\\
       &\ge n\sqrt{Area(\delta)}-n(Area(\delta)+Area^2(\delta))\nonumber\\
       &=n(\sqrt{Area(\delta)}-Area^2(\delta))\nonumber\\
       &\ge \sqrt{n},\label{fbcziofjwAPRDIPIBERN}
   \end{align}
   where the last line holds if we choose $\delta>0$ small enough. Therefore, the combination of \eqref{dhbjkdkjasnjansbern} and \eqref{fbcziofjwAPRDIPIBERN} implies that
   \begin{align}
       J_6&=\left|1-\frac{N(A_{L,1})}{N(A_{L,2})} \right|\frac{1}{N(A_{L,1})}\left|\sum_{i:X_i\in I}{Y_i}\right| = \left|1-\frac{N(A_{L,1})}{N(A_{L,2})} \right|\frac{N(I)}{N(A_{L,1})}\frac{1}{N(I)}\left|\sum_{i:X_i\in I}{Y_i}\right|\nonumber\\
       &\leq 4\sqrt{Area(\delta)}\cdot\left( \left|\frac{1}{N(I)}\sum_{i:X_i\in I}{\varepsilon_i}\right|+\|m\|_\infty\right)\nonumber\\
       &\leq 4\sqrt{Area(\delta)} (\alpha+\|m\|_\infty),\label{egfjhsidjiolberno}
   \end{align}
   where the last line follows from \eqref{eq2}.

   Next, we consider $J_7$. Since $N(A_{L,1}),N(A_{L,2})\ge \sqrt{n}$,  we only consider its first term without loss of generality:
   $$
   J_{7,1}:=\frac{1}{N(A_{L,1})}\left|\sum_{i:X_i\in III\cup IV}{Y_i}\right|.
   $$
   By $N(A_{L,1})\ge \sqrt{Area(\delta)}\cdot n$ and $N(IV)\le (Area(\delta)+Area^2(\delta))\cdot n$, we have 
   \begin{align}
       J_{7,1}&\le \frac{1}{N(A_{L,1})}\left| \sum_{i:X_i\in IV}{m(X_i)}\right|+\frac{1}{N(A_{L,1})} \left|\sum_{i:X_i\in IV}{\varepsilon_i}\right| \nonumber\\
       &\le \frac{N(IV)}{N(A_{L,1})}\|m\|_\infty+\frac{1}{N(A_{L,1})} \left|\sum_{i:X_i\in IV}{\varepsilon_i}\right| \nonumber\\
       &\le 2\sqrt{Area(\delta)}\cdot \|m\|_\infty+ \frac{1}{\sqrt{Area(\delta)}}\cdot \frac{1}{n} \left|\sum_{i:X_i\in IV}{\varepsilon_i}\right|.\label{dbjhsgfdiaObbBernou}
   \end{align}
   If $N(IV)\ge \sqrt{n}$, by \eqref{eq2} we have
   \begin{equation}\label{VjdiojalidjBernno}
       \frac{1}{n} \left|\sum_{i:X_i\in IV}{\varepsilon_i}\right| \le \frac{1}{N(IV)} \left|\sum_{i:X_i\in IV}{\varepsilon_i}\right|\le \frac{\alpha}{4}\sqrt{Area(\delta)}.
   \end{equation}
   If $N(IV)< \sqrt{n}$, by \eqref{eq1} we have
   \begin{equation}\label{GHbdjhsbJKHSNABBer}
       \frac{1}{n} \left|\sum_{i:X_i\in IV}{\varepsilon_i}\right| \le \frac{c(\rho)\sqrt{\log n}}{\sqrt{n}}.
   \end{equation}
   Therefore, by \eqref{dbjhsgfdiaObbBernou}, \eqref{VjdiojalidjBernno} and \eqref{GHbdjhsbJKHSNABBer}, there exits $\delta(\alpha)>0 $ and $ N_4(\delta(\alpha))\in\mathbb{Z}_+$ such that for all $n>N_4(\delta(\alpha))$, we have
   \begin{equation}\label{Bygbhjkgdber}
       J_7\le 4\sqrt{Area(\delta(\alpha))}\cdot \|m\|_\infty+ \frac{2}{\sqrt{Area(\delta(\alpha))}}\cdot \frac{1}{n} \left|\sum_{i:X_i\in IV}{\varepsilon_i}\right|\le \alpha.
   \end{equation}
Next,  we bound $J_5$. Note that
   \begin{align*}
       J_5 &:= \left| \frac{2}{n}\sum_{i:X_i\in I}{\left(Y_i-\frac{\bar{Y}_{A_{L,1}}+\bar{Y}_{A_{L,2}}}{2}\right)} \right|\\
       &\le  \frac{2}{n}\left| \sum_{i:X_i\in I}{(Y_i-\bar{Y}_{A_{L,1}})}\right|+ \frac{2}{n}\left| \sum_{i:X_i\in I}{(Y_i-\bar{Y}_{A_{L,2}})}\right|\\
           &:= J_{5,1}+J_{5,2}.
   \end{align*}
   Since $J_{5,1}$ is similar to $J_{5,2}$, we only need to analyze $J_{5,1}$. By some calculations, we have
   \begin{align*}
       J_{5,1}&= \frac{2}{n}\left| \sum_{i:X_i\in I}{(Y_i-\bar{Y}_{A_{L,1}})}+\sum_{i:X_i\in IV}{(Y_i-\bar{Y}_{A_{L,1}})}-\sum_{i:X_i\in IV}{(Y_i-\bar{Y}_{A_{L,1}})}\right|\\
       &= \frac{2}{n}\left| \sum_{i:X_i\in {A_{L,1}}}{(Y_i-\bar{Y}_{A_{L,1}})}-\sum_{i:X_i\in IV}{(Y_i-\bar{Y}_{A_{L,1}})}\right|\\
       &\le \frac{2}{n}\left| \sum_{i:X_i\in {A_{L,1}}}{(Y_i-\bar{Y}_{A_{L,1}})}\right| +\frac{2}{n}\left|\sum_{i:X_i\in IV}{(Y_i-\bar{Y}_{A_{L,1}})}\right|\\
       &=\frac{2}{n}\left|\sum_{i:X_i\in IV}{(Y_i-\bar{Y}_{A_{L,1}})}\right|\\
       &\leq \frac{2}{n}\left|\sum_{i:X_i\in IV}{Y_i}\right|+ \frac{2}{n}\left|\sum_{i:X_i\in IV}{\bar{Y}_{A_{L,1}}}\right|:=J_{5,1,1}+J_{5,1,2}.
   \end{align*}
   According to \eqref{VjdiojalidjBernno} and \eqref{GHbdjhsbJKHSNABBer}, there exits $\delta(\alpha)>0 $ and $ N_5(\delta(\alpha))\in\mathbb{Z}_+$ such that for all $n>N_5(\delta(\alpha))$,
   \begin{equation}\label{VHJBBjdsankjHBJbern}
       J_{5,1,1}\le \frac{\alpha}{4}.
   \end{equation}
   Next, we bound $J_{5,1,2}$. Since $S(A_{L,1})>Area(\delta)$, $N(A_{L,1})\ge (Area(\delta)-Area^2(\delta))n\ge \sqrt{n}$ whenever $n$ is larger than some $N_6(\delta)\in\mathbb{Z}_+$. Therefore, when $n>N_6(\delta)$, we have $|\bar{Y}_{A_{L,1}}|\le \|m\|_\infty+\alpha$. By using this result, if $n>N_6(\delta)$,
   \begin{equation}\label{GVYHBkxfvsBern}
       J_{5,1,2}=\frac{N(IV)}{n}|\bar{Y}_{A_{L,1}}|\le \frac{N(IV)}{n}(\|m\|_\infty+\alpha)\le Area(\delta)\cdot (\|m\|_\infty+\alpha).
   \end{equation}
   By \eqref{VHJBBjdsankjHBJbern} and \eqref{GVYHBkxfvsBern}, there exits $ \delta(\alpha)>0 $ with $ N_7(\delta(\alpha))\in\mathbb{Z}_+$ such that for all $n>N_7(\delta(\alpha))$, we have
   \begin{equation}\label{GHBJBvghdsvdGVHHVBer}
       J_5\le \alpha.
   \end{equation}

    Finally, the combination of  \eqref{egfjhsidjiolberno}, \eqref{Bygbhjkgdber} and \eqref{GHBJBvghdsvdGVHHVBer} leads that there exits $\delta(\alpha)>0 $ with $ N_8(\delta)\in\mathbb{Z}_+$ such that for all $n>N_8(\delta(\alpha))$,
    \begin{equation}\label{GHJBbdsjdbHJHBernou}
        |J_1|\le (4\sqrt{Area(\delta(\alpha))} (\alpha+\|m\|_\infty)+\alpha)\alpha\le \frac{\alpha}{3}.
    \end{equation}
    With the similar argument, we also know there exits $\delta>0 $ with $ N_9(\delta(\alpha))\in\mathbb{Z}_+$ such that for all $n>N_9(\delta(\alpha))$, we have
    \begin{equation}\label{GHJBbdsjdbHJHBernou2}
        |J_2|\le \frac{\alpha}{3}.
    \end{equation}

    Now we bound the last term $J_3$ by decomposing it into two parts:
    \begin{align*}
        J_3&= \Bigg[\frac{1}{n}\sum_{i:X_i\in IV}{(Y_i-\bar{Y}_{A_{L,1}})^2}-\frac{1}{n}\sum_{i:X_i\in IV}{(Y_i-\bar{Y}_{A_{R,2}})^2}\Bigg]\\
          &+\Bigg[\frac{1}{n}\sum_{i:X_i\in III}{(Y_i-\bar{Y}_{A_{R,1}})^2}
        -\frac{1}{n}\sum_{i:X_i\in III}{(Y_i-\bar{Y}_{A_{L,2}})^2}\Bigg]\\
        &:= J_{3,1}+J_{3,2}.
    \end{align*}
    By symmetry of $J_{3,1}$ and $J_{3,2}$, we only need to bound $J_{3,1}$. After some calculations, we know
    \begin{align*}
        |J_{3,1}|&:= \left|\frac{1}{n}\sum_{i:X_i\in IV}{(Y_i-\bar{Y}_{A_{L,1}})^2}-\frac{1}{n}\sum_{i:X_i\in IV}{(Y_i-\bar{Y}_{A_{R,2}})^2}\right|\\
        &= \frac{1}{n}\sum_{i:X_i\in IV}{ \left(|\bar{Y}_{A_{R,2}}-\bar{Y}_{A_{L,1}}| \cdot \left| Y_i-\frac{\bar{Y}_{A_{L,1}}+\bar{Y}_{A_{R,2}}}{2}\right|\right)}\\
        &\le |\bar{Y}_{A_{R,2}}-\bar{Y}_{A_{L,1}}|\cdot\left( \frac{1}{n}\left|\sum_{i:X_i\in IV}{Y_i}\right|+ \frac{N(IV)}{n}\left|\frac{\bar{Y}_{A_{L,1}}+\bar{Y}_{A_{R,2}}}{2}\right|\right).
    \end{align*}
   Recall that $|\bar{Y}_{A_{L,1}}|\le \|m\|_\infty+\alpha$ if $n>N_6(\delta)$ and $\frac{1}{n}\left|\sum_{i:X_i\in IV}{Y_i}\right|\le\frac{\alpha}{4}$ and $\frac{N(IV)}{n}\le Area(\delta)$. Therefore, there exits $\delta(\alpha)>0 $ with $ N_{10}(\delta(\alpha))\in\mathbb{Z}_+$ such that for all $n>N_{10}(\delta(\alpha))$, $|J_{3,1}|\le \frac{\alpha}{6}$ and
   \begin{equation}\label{GHJgbGBHBBer}
       |J_3|\le \frac{\alpha}{3}.
   \end{equation}

   Finally, the combination of \eqref{GHJBbdsjdbHJHBernou}, \eqref{GHJBbdsjdbHJHBernou2} and \eqref{GHJgbGBHBBer} implies that there exits $\delta(\alpha)>0, N_{11}\in\mathbb{Z}_+$ such that if $n>N_{11}$,
$$
|\Delta_{n,1}(\theta_1,s_1)-\Delta_{n,1}(\theta_2,s_2)|\le |J_1|+|J_2|+|J_3|\le \alpha
$$
with probability larger than $1-3\rho$.  This finishes the proof for the simple case $k=1 $and $ p=2$.


Next, we consider the case where $k>1$. Without loss of generality, we prove the equicontinuous of $\Delta_{n,k}(\cdot,\cdot)$ only for $k=4, p=2$.
Consider the decomposition of $\Delta_{n,4}(\bm{d}_4)-\Delta_{n,4}(\bm{d}_4')$:
\begin{align*}
    \Delta_{n,4}(\bm{d}_4)-\Delta_{n,4}(\bm{d}_4')&=\Delta_{n,4}(d_1,d_2,d_3,d_4)-\Delta_{n,4}(d_1,d_2,d_3,d_4')\\
    &+\Delta_{n,4}(d_1,d_2,d_3,d_4')-\Delta_{n,4}(d_1,d_2,d_3',d_4')\\
    &+\Delta_{n,4}(d_1,d_2,d_3',d_4')-\Delta_{n,4}(d_1,d_2',d_3',d_4')\\
    &+\Delta_{n,4}(d_1,d_2',d_3',d_4')-\Delta_{n,4}(d_1',d_2',d_3',d_4')\\
    &:=\sum_{j=1}^4{Part_j}.
\end{align*}
Then, $Part_j, j=1,\ldots,4$ can be divided into two groups depending on whether the fourth coordinates are the same. In $Part_1$ all corresponding cuts are same except $d_4\neq d_4'$, while $d_4=d_4'$ in each $Part_j, j=2,3,4$. Then, we can use the same method to bound $Part_1$, which was used in the case $k=1, p=2$. Meanwhile, by the similarity of $Part_j, j=2,3,4$, we only  analyze $Part_2$. Without loss of generality, we just consider a case of $Part_2$ shown in Fig \ref{fig:3}. To simplify notations, let $A_b:=\triangle ABC$ and $A_a:= \triangle ADE$. Therefore, we have $\triangle ABC_L=A_{b,L}, \triangle ABC_R=A_{b,R}$ and $\triangle ADE_L=A_{a,L}, \triangle ADE_R=A_{a,R}$. 
    \begin{figure}[ht]
       \centering
       \includegraphics[width=0.5\linewidth]{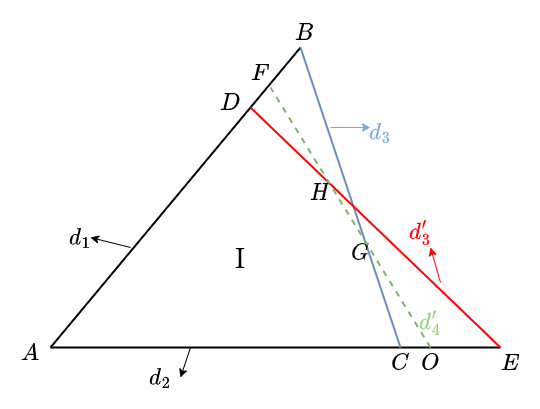}
       \caption{In this case, $A(x,(d_1,d_2,d_3))= \triangle ABC$ and $A(x,(d_1,d_2,d_3'))= \triangle ADE$. The cut $d_4'$ divides $\triangle ABC$ into two daughters, where $\triangle ABC_L=\Box AFGC$ and $\triangle ABC_R=\triangle GFB$. Meanwhile, the cut $d_4'$ divides $\triangle ADE$ into two daughters, where $\triangle ABC_L=\Box ADHO$ and $\triangle ABC_R=\triangle HOE$.}
       \label{fig:3}
   \end{figure}
   
Using the above notations, we have
   \begin{align}
       &\Delta_{n,4}(d_1,d_2,d_3,d_4')-\Delta_{n,4}(d_1,d_2,d_3',d_4')\nonumber\\
       &= \frac{1}{N(A_b)}\sum_{i:X_i\in A_b}{(Y_i-\bar{Y}_{A_b})^2}- \frac{1}{N(A_a)}\sum_{i:X_i\in A_a}{(Y_i-\bar{Y}_{A_a})^2}\nonumber\\
       &+\frac{1}{N(A_a)}\sum_{i:X_i\in A_{a,R}}{(Y_i-\bar{Y}_{A_{a,R}})^2}- \frac{1}{N(A_b)}\sum_{i:X_i\in A_{b,R}}{(Y_i-\bar{Y}_{A_{b,R}})^2}\nonumber\\
       &+\frac{1}{N(A_a)}\sum_{i:X_i\in A_{a,L}}{(Y_i-\bar{Y}_{A_{a,L}})^2}- \frac{1}{N(A_b)}\sum_{i:X_i\in A_{b,L}}{(Y_i-\bar{Y}_{A_{b,L}})^2}\nonumber\\
       &:=T_1+T_2+T_3. \label{hdasbkJHBNJKbnkjdfjklqwBernouli}
   \end{align}

Here, we assume $S(A_{b,L})>2 Area(\delta)$.  Otherwise, it is easy to prove the trivial case $S(A_{b,L})\le 2 Area(\delta)$. First, we bound $T_1$. By the assumption $\bm{d}_4\in \bar{A}^\xi_k(x)$, we know from \eqref{eq3} that $N(A_b)\ge 0.5\xi^2\cdot n$ with probability larger than $1-\rho$ when $n$ is large enough.  This means we can write $N(A_b)=n$ in order to simplify notation. With the same argument, we can also write $N(A_a)=n$. By symmetry of $ \frac{1}{N(A_b)}\sum_{i:X_i\in A_b}{(Y_i-\bar{Y}_{A_b})^2}$ and $\frac{1}{N(A_a)}\sum_{i:X_i\in A_a}{(Y_i-\bar{Y}_{A_a})^2}$, we only need to consider the case $T_1\ge 0$ and further assume $N(A_b)\ge N(A_a)$.  Therefore,
  \begin{align*}
    T_1 &\le \frac{1}{N(A_a)}\left[ \sum_{i:X_i\in I}{(Y_i-\bar{Y}_{A_b})^2}- \sum_{i:X_i\in I}{(Y_i-\bar{Y}_{A_a})^2}\right]
     \\
    &+ \frac{1}{N(A_a)}\left[ \sum_{i:X_i\in A_b\setminus I}{(Y_i-\bar{Y}_{A_b})^2}- \sum_{i:X_i\in A_a\setminus I}{(Y_i-\bar{Y}_{A_a})^2}\right]\\
     &:= T_{1,1}+T_{1,2},
  \end{align*}
  where $I$ denotes the pentagon $ADHGC$. Since $BC$ and $DE$ are contained in $\mathfrak{R}(d_3,d_3',\delta)$, we have $S(\triangle ADE)> Area(\delta)$. Note that
  $$
   T_{1,1}= \frac{1}{n}\sum_{i:X_i\in I}{(\bar{Y}_{A_a}-\bar{Y}_{A_b})\left(Y_i-\frac{\bar{Y}_{A_a}+\bar{Y}_{A_b}}{2}\right)}
  $$
  and $S(I)> Area(\delta)$. Thus, we can use the method which bounds $|J_1|$ above to find an upper bound of $T_{1,1}$. Next, we focus on $T_{1,2}$. By some calculations,
  \begin{align}
    T_{1,2}&\le \frac{2}{n}\sum_{i:X_i\in A_b\setminus I}{Y_i^2}+ \frac{2}{n}\sum_{i:X_i\in A_b\ I}{\bar{Y}_{A_b}^2} \nonumber\\
    &\le \frac{2}{n}\sum_{i:X_i\in A_b\ I}{(m(X_i)+\varepsilon_i)^2}+ 2\frac{N(A_b\setminus I)}{n}(\|m\|_\infty+\alpha)^2 \nonumber\\
    &\le 4\frac{N(A_b\setminus I)}{n}\|m\|^2_\infty+ \frac{4}{n}\cdot \sum_{i:X_i\in A_b\setminus I}{\varepsilon_i^2}+2\frac{N(A_b\setminus I)}{n}(\|m\|_\infty+\alpha)^2. \label{ghbhbBNKJNJvgvBer}
  \end{align}
At this point, we need three observations. By $\eqref{eq3}$, the first one is
 \begin{equation}\label{hbjsbfdcjksdBJKNKber}
    N(A_b\setminus I)/n\le 2Area(\delta)+Area^2(\delta)
 \end{equation}
 with probability larger than $1-\rho$ when $n$ is large. The second one relates to the chi-squared distribution. Recall that  $\mathscr{F}$ is the class of all subsets of $[0,1]^p$ and there are at most $n^2$ sets taking the form $\{i:X_i\in F\}$ for $F\in \mathscr{F}$. Since $\P(\chi^2(n)\ge 5n)\le \exp{(-n)}$, there exists $N_{12}\in\mathbb{Z}_+$ such that for any $F\in \mathscr{F}$ satisfying $N(F)\ge \sqrt{n}$ and $n>N_{12}$,
$$
  \frac{1}{N(F)}\sum_{i:X_i\in F}{\varepsilon_i^2}\le 5.
$$
with probability larger than $1-\rho$. The third observation is like \eqref{eq1}. There exists $N_1\in\mathbb{Z}_+, c(\rho)>0$ such that for all $n>N_1$,
 \begin{equation*}
        \max_{1\le i\le n}{|\varepsilon_i|^2}\leq c(\rho)\ln n
 \end{equation*}
with probability at least $1-\rho$. With the second and third observations, there exists $N_{13}\in\mathbb{Z}_+$ such that for all $n>N_{13}$,
\begin{equation}\label{GHBJBJBansjdnBJKbernnpu}
  \frac{4}{n}\cdot \sum_{i:X_i\in A_b\setminus I}{\varepsilon_i^2}\le \frac{\alpha}{6}
\end{equation}
with probability larger than $1-\rho$. By \eqref{ghbhbBNKJNJvgvBer}, \eqref{hbjsbfdcjksdBJKNKber} and \eqref{GHBJBJBansjdnBJKbernnpu}, there exits $\delta(\alpha)>0, N_{14}\in\mathbb{Z}_+$ such that if $n>N_{14}$,
\begin{equation}\label{babdJKkdjskajdsHJNNber}
  T_1\le \alpha
\end{equation}
with probability larger than $1-\rho$. With the same argument, there also exits $\delta(\alpha)>0, N_{15}\in\mathbb{Z}_+$ such that if $n>N_{15}$,
\begin{equation}\label{babdJKkdj2skajds2HJNNber}
  T_2\le \alpha
\end{equation}
with probability larger than $1-\rho$. Finally, consider $T_3$ by using the decomposition below.
\begin{align}
  T_3&:= \frac{1}{N(A_a)}\sum_{i:X_i\in A_{a,L}}{(Y_i-\bar{Y}_{A_{a,L}})^2}- \frac{1}{N(A_b)}\sum_{i:X_i\in A_{b,L}}{(Y_i-\bar{Y}_{A_{b,L}})^2} \nonumber\\
     &=  \frac{1}{N(A_a)}\left[ \sum_{i:X_i\in I}{(Y_i-\bar{Y}_{A_{a,L}})^2}+\sum_{i:X_i\in A_{a,L}\setminus I}{(Y_i-\bar{Y}_{A_{a,L}})^2}\right] \nonumber\\
     &-  \frac{1}{N(A_b)}\left[ \sum_{i:X_i\in I}{(Y_i-\bar{Y}_{A_{b,L}})^2}+\sum_{i:X_i\in A_{b,L}\setminus I}{(Y_i-\bar{Y}_{A_{b,L}})^2}\right]\nonumber\\
     &=  \left[ \frac{1}{N(A_a)}\sum_{i:X_i\in I}{(Y_i-\bar{Y}_{A_{a,L}})^2} -\frac{1}{N(A_b)}\sum_{i:X_i\in I}{(Y_i-\bar{Y}_{A_{b,L}})^2}\right]\nonumber\\
     &+ \left[ \frac{1}{N(A_a)}\sum_{i:X_i\in A_{a,L}\setminus I}{(Y_i-\bar{Y}_{A_{a,L}})^2} -\frac{1}{N(A_b)}\sum_{i:X_i\in A_{b,L}\setminus I}{(Y_i-\bar{Y}_{A_{b,L}})^2}\right] \nonumber\\
     &:= T_{3,1}+T_{3,2}. \nonumber
\end{align}
Note that $T_{3,1} $ and $ T_{3,2}$ are similar to $T_{1,1}$ and $ T_{1,2}$ respectively.  Therefore,  methodologies that are used to deal with $T_{1,1}, T_{1,2}$  can be employed again to bound terms $T_{3,1}, T_{3,2}$. In conclusion, by \eqref{hdasbkJHBNJKbnkjdfjklqwBernouli}, \eqref{babdJKkdjskajdsHJNNber} and \eqref{babdJKkdj2skajds2HJNNber} there exits $\delta(\alpha)>0$ with $ N_{15}(\delta(\alpha))\in\mathbb{Z}_+$ such that if $n>N_{15}(\delta(\alpha))$,
$$
 |\Delta_{n,4}(d_1,d_2,d_3,d_4')-\Delta_{n,4}(d_1,d_2,d_3',d_4')|\le 3\alpha
$$
with probability larger than $1-\rho$. This completes the proof. 	\hfill\(\Box\)

\section{Proof of Proposition \ref{proposition1}}
 At the beginning of the proof, we analyze the first tree $T_{\D_n,\L}$ (or its random version $T_{\D_n,\L}^r$) in the  boosting process. Let $\mathbb{A}_1,\mathbb{A}_2,\ldots,\mathbb{A}_t$ be $t$ leaves of $T_{\D_n,\L}$ (or its random version $T_{\D_n,\L}^r$). Then, we know each $\mathbb{A}_j$ is generated after performing $\mathcal{C}_j$ cuts in $[0,1]^p$ with $\mathcal{C}_j\le t$. Since each tree partition corresponds with a direction $\theta\in\R^p$ and a threshold $s\in\R$, we can denote each $\mathbb{A}_j$ by 
        $$
          \mathbb{A}_j=\tilde  A_{j.1}\cap\cdots\cap  \tilde A_{j.\mathcal{C}_j},
        $$        
        where $\tilde A_{j.\ell} = \{x\in [0,1]^p: \theta_{j,\ell}^Tx > s_\ell\} $  or $\tilde A_{j.\ell} = \{x\in [0,1]^p: \theta_{j,\ell}^Tx \le s_\ell\} $ for each $\ell=1,2,\ldots,\mathcal{C}_j$
and $\theta_{j,\ell}\in\R^p, s_\ell\in\R$. In Figure \ref{ODT11fig2}, We give an example of such representation of tree leaves.

    \begin{figure}[ht]
    \includegraphics[height=0.25\textheight]{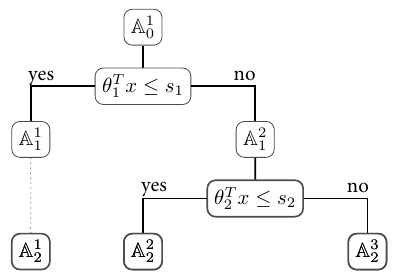}
    \caption{This ODT  has two layers and three leaves denoted by $\mathbb{A}_2^1, \mathbb{A}_2^2, \mathbb{A}_2^3$. Note that $\mathbb{A}_1^1$ is not partitioned anymore and thus $\mathbb{A}_1^1=\mathbb{A}_2^1$. Meanwhile, it can be seen that $\mathbb{A}_2^1=\{x:\theta_1^Tx\le s_1\}$, $\mathbb{A}_2^2=\{x:\theta_1^Tx> s_1\}\cap\{x:\theta_2^Tx\le s_2\}$ and $\mathbb{A}_2^3=\{x:\theta_1^Tx> s_1\}\cap\{x:\theta_2^Tx> s_2\}$.}
    \label{ODT11fig2}
    \end{figure}     
        
    Meanwhile, note that the following equation holds 
        \begin{equation}\label{asbdBerr}
         \mathbb{I}(x\in  \mathbb{A}_j)= \sigma_0\left(\sum_{\ell=1}^{\L_j}\sigma_0(s_\ell-\theta_{j,\ell}^Tx)-\mathcal{C}_j\right)
       \end{equation}
        if 
        \begin{equation}\label{asbdBerr2}
            \mathbb{A}_j=\{x\in [0,1]^p: \theta_{j,1}^Tx\le s_1\}\cap\cdots\cap  \{x\in [0,1]^p:\theta_{j,\mathcal{C}_j}^Tx\le s_{\mathcal{C}_j}\}.
        \end{equation}

        Since $\mathbb{I}(\{x\in[0,1]^p: \theta^\top x>s\})=\sigma_0(0)-\sigma_0(s-\theta^\top x)$, we can assume \eqref{asbdBerr} holds without loss of generality. This is because that if $\theta_{j,\ell}^\top x>s$ we only need to replace $\sigma_0(s_\ell-\theta_{j,\ell}^Tx)$ by $\sigma_0(0)-\sigma_0(s_\ell-\theta_{j,\ell}^\top x)$ in \eqref{asbdBerr}. Recall that $\bar{Y}_{\mathbb{A}_j}$ is the constant estimator in the region
         $\mathbb{A}_j$. Therefore, the first tree in the boosting process is equal to
        $$
            \sum_{j=1}^{t}{\bar{Y}_{\mathbb{A}_j} \sigma_0\left(\sum_{\ell=1}^{\mathcal{C}_j}\sigma_0(s_\ell-\theta_{j,\ell}^Tx)-\mathcal{C}_j\right)},
        $$
        which is a neural network with three layers. Therefore, $T_{\D_n,\L}$ (or its random version $T_{\D_n,\L}^r$) can be regarded as a neural network with  $\sum_{j=1}^t{\mathcal{C}_j}$ neurons in the first hidden layer and  $t$ neurons in the second hidden layer. 
        
        Since feed-forward neural networks have additive structures, we know 
        any boosting tree with $k$ ODTs is  in the following  neural network class
        $$
         \left\{ \sum_{i=1}^k\sum_{j=1}^ta_{i,j}\sigma_0\left(  \sum_{\ell=1}^t\sigma_0(\theta_{i,j,\ell}^Tx+s_{i,j,\ell})b_{i,j,\ell}+v_{i,j}\right): a_{i,j},b_{i,j,\ell},s_{i,j,\ell},v_{i,j}\in\R,\theta_{i,j,\ell}\in\R^p\right\},
        $$
        which has $kt^2(p+1)$ parameters ($\theta_{i,j,\ell},s_{i,j,\ell}$) in the first  hidden layer and  $kt(t+1)$ parameters ($b_{i,j,\ell},v_{i,j}$) in the second  hidden layer  and  $kt$ parameters ($a_{i,j}$) in the final  hidden layer. This completes the proof.   \hfill\(\Box\)

\section{Proof of Lemma \ref{Oracle inequality for boosting tree}}\label{S.8.}
First, we fix the data $\D_n$ and omit it when writing any conditional expectation during this proof. The boosting tree with random ODT is shown as follows.

	\begin{algorithm}[H]\label{Algorithm.rBODT}
		\caption{random ODT-based boosting tree}
		\KwIn{Data $\D_n=\{(X_i,Y_i)\}_{i=1}^n$, pre-specified number of leaves $t$, and maximum number of iterations $k_n$.}
		
		Initialize the number of iteration $j=1$, and data $ \D_n^1=\D_n$.
		
		\For{$j \in  \{1, 2,\cdots, k_n\}$}{

            Obtain the tree estimator $ m^r_{n, n, t,\L_j} $ by using Algorithm r1 (check it at the beginning of Section \ref{sec.ODRF}) with data $\D_n^j$;
            
			Estimate  $m(x)$ in the step $j$  by
			\begin{equation}\label{jhajkhdjkDSTEP323}
				m_{j,boost}(x):= \sum_{\ell=1}^j{a_{j,\ell}^*m^r_{n, n,t,\L_\ell}(x)},
			\end{equation}
			where
			\begin{equation}\label{jhajkhdjkDSTEP3}
				(a_{j,1}^*,a_{j,2}^*,\ldots,a_{j,j}^*):=arg\min_{\substack{a_{j,\ell}\in\R\\\ell=1,\ldots,j}}{\sum_{i=1}^n{\left(Y_i- \sum_{\ell=1}^j{a_{j,\ell}m^r_{n, n, t,\L_\ell}(X_i)}\right)^2}};
			\end{equation}


           Calculate residual $r_{j,i}=Y_i-m_{j,boost}(X_i)$ and update data $\D_n^{j+1}=\{(X_i,r_{j,i})\}_{i=1}^n$;

           Update $j\leftarrow j+1$;
		}
		\KwOut{The boosting estimator  $m_{k_n,boost}(x), x\in [0,1]^p$.}
	\end{algorithm} 


 For the activation function $ \sigma_0 $, we can assume $\theta_j\in\Theta^p$ since $\sigma_0(\theta_j^\top x+s_j)=\sigma_0(\theta_j^\top x/\|\theta_j\|_2+s_j/\|\theta_j\|_2)$ for any $x\in [0,1]^p$. Rewrite the function $h(x)$ as
    $$
        h(x)=\sum_{j=1}^\infty a_{j}\|\sigma_0(\theta_j^\top x+s_j)\|_n\cdot \frac{\sigma_0(\theta_j^\top x+s_j)}{\|\sigma_0(\theta_j^\top x+s_j)\|_n}\in \mathcal{G}
    $$
    and define 
    $$
       \|h\|_{EL^1}:= \sum_{j=1}^\infty |a_{j}|\|\sigma_0(\theta_j^\top x+s_j)\|_n\le \sum_{j=1}^\infty{|a_j|}<\infty
    $$
    since $\|\sigma_0(\theta_j^\top x+s_j)\|_n\le 1$ and  $h(x)\in\mathcal{G}$. Then, define the dictionary
$$
\mathcal{D}ic_n=\left\{\frac{\sigma_0(\theta^\top x+s)}{\|\sigma_0(\theta^\top x+s)\|_n}: \theta\in \Theta ^p,s\in\R\right\}.
$$

Denote by $r_{k_n}:=\mathbb{Y}-(m_{k_n,boost}(X_1),\ldots, m_{k_n,boost}(X_n))^\top\in\R^n$ the vector of training errors for each $k_n\geq 1$. Specially, we define $r_0=\mathbb{Y}$. For each $k_n\in\mathbb{Z}_+$, we have
 \begin{equation}\label{Math21}
     \|r_{k_n-1}\|_n^2=\langle r_{k_n-1},h+\mathbb{Y}-h\rangle_n \leq \|h\|_{EL^1}\cdot\sup_{g\in \mathcal{D}ic_{n}}|\langle r_{k_n-1},g\rangle|_n+\|r_{k_n-1}\|_n\|\mathbb{Y}-h\|_n,
 \end{equation}
 where in the first equation, we use the fact that the vector $r_{k_n-1}$ is orthogonal to $m_{k_n-1,boost}\in\R^n$ (recall the projection step \eqref{jhajkhdjkDSTEP3} in our boosting process); and in the last inequality, we use Cauchy-Schwarz inequality in $\R^n$. Note that $\|r_{k_n-1}\|_n \|\mathbb{Y}-h\|_n\leq \frac{1}{2}(\|r_{k_n-1}\|_n^2+\|\mathbb{Y}-h\|_n^2)$. Therefore, \eqref{Math21} implies that
 \begin{equation}\label{Math22}
    \sup_{g\in \mathcal{D}ic_n}|\langle r_{k_n-1},g\rangle|_n\geq \frac{\|r_{k_n-1}\|_n^2-\|\mathbb{Y}-h\|_n^2}{2\cdot\|h\|_{EL^1}}.
 \end{equation}

Before continuing our arguments, we need more notations on random seeds.    For any $1\le j\le k_n$, let $$\mathcal{S}_{j,1},\mathcal{S}_{j,2},\ldots,\mathcal{S}_{j,t-1}\subseteq\{1,2,\ldots,p\}$$ be a sequence of random indexes chosen for the CART splitting in the $j$-th random ODT (see the beginning of Section \ref{sec.ODRF}). For example, $\mathcal{S}_{j,1}$ is used in the   root node splitting of the $j$-th tree. Let $$\Phi_{k_n}:=(\mathcal{S}_{1,1},\mathcal{S}_{1,2},\ldots,\mathcal{S}_{1,t-1},\ldots,\mathcal{S}_{k_n,1},\mathcal{S}_{k_n,2},\ldots,\mathcal{S}_{k_n,t-1})$$ be the collection of all random indexes. Therefore, $\Phi_{k_n}$ is used in the construction of $m_{k_n,boost}$ in Algorithm \thealgocf.

Now, we study the relationship between$\|r_{k_n-1}\|_n^2$ and $\E_{\Phi_{k_n}|\Phi_{k_n-1}}\|r_{k_n}\|^2$. Recall the estimator corresponding with the boosting tree 
is $m_{k_n,boost}(x):= \sum_{\ell=1}^{k_n}{a_{k_n,\ell}^*m^r_{n, n,t,\L_\ell}(x)}$. From \eqref{jhajkhdjkDSTEP3}, we see $r_{k_n}\in\R^n$ is the residual vector obtained by projecting  $\mathbb{Y}$ onto the linear space $span\{v_\ell, \ell=1,\ldots,k_n\}$ and the vector $v_\ell:=(m^r_{n, n,t,\L_\ell}(X_1),m^r_{n, n,t,\L_\ell}(X_2),\ldots,m^r_{n, n,t,\L_\ell}(X_n))^\top\in\R^n$. Therefore, for any $a,b,s\in\R$ and $\theta\in\Theta^p$, we have
\begin{align}
    \E_{\Phi_{k_n}|\Phi_{k_n-1}}\|r_{k_n}\|_n^2&\le \E_{\Phi_{k_n}|\Phi_{k_n-1}}\|r_{k_n-1}- m^{r}_{n,n,t,\L_{k_n}}\|_n^2 \nonumber\\
    &\le \E_{\Phi_{k_n}|\Phi_{k_n-1}}\|r_{k_n-1}- a\cdot \mathbb{I}(-\theta^\top x_{\mathcal{S}_{k_n,1}}\le s)-b\cdot \mathbb{I}(-\theta^\top x_{\mathcal{S}_{k_n,1}}> s)\|_n^2 \label{dgshjfbbnqqw}\\
    &\le  \E_{\Phi_{k_n}|\Phi_{k_n-1}}\| r_{k_n-1}- a\cdot \sigma_0(\theta^\top x_{\mathcal{S}_{k_n,1}}+ s)\|_n^2 \nonumber\\
    &\le \E_{\Phi_{k_n}|\Phi_{k_n-1}}\left( \| r_{k_n-1}\|_n^2 -\left|\langle r_{k_n-1}, \frac{\sigma_0(\theta^\top x_{\mathcal{S}_{k_n,1}}+s)}{\|\sigma_0(\theta^\top x_{\mathcal{S}_{k_n,1}}+s)\|_n} \rangle_n\right|^2 \right) \nonumber\\
    &\le \|r_{k_n-1}\|_n^2- c(p)\cdot \left|\langle r_{k_n-1}, \frac{\sigma_0(\theta^\top x+s)}{\|\sigma_0(\theta^\top x+s)\|_n} \rangle_n\right|^2, \label{hadsb1231}
\end{align}
       where $c(p)>0$; and \eqref{dgshjfbbnqqw} holds since the training error  corresponding with the  layer $\L_{k_n}$ is no larger than the training error of the first layer; and \eqref{hadsb1231} holds because we have a positive probability to choose all features in the cut of root node of the $k_n$-th random ODT. Since \eqref{hadsb1231} is true for arbitrary $\theta\in \Theta^p$ and $s\in\R$, thus 
 \begin{equation}\label{Math23}
     \|r_{k-1}\|_n^2-\E_{\Phi_{k_n}|\Phi_{k_n-1}}\|r_{k_n}\|_n^2\geq c(p)\cdot\sup_{g\in \mathcal{D}ic_{n}}{|\langle r_{k_n-1},g\rangle|_n^2}.
 \end{equation}
  
  In conclusion, the combination of \eqref{Math22} and \eqref{Math23} implies that
  \begin{equation}\label{Math24}
     \|r_{k_n-1}\|_n^2-\E_{\Phi_{k_n}|\Phi_{k_n-1}}\|r_{k_n}\|_n^2\geq \frac{\max{(\|r_{k_n-1}\|_n^2-\|\mathbb{Y}-h\|_n^2,0})^2}{4\cdot\left(\|h\|_{EL^1}/ \sqrt{c(p)}\right)^2}.
 \end{equation}
Taking expectation w.r.t. $\Xi_{k_n-1}$ on both sides of \eqref{Math24} yields 
\begin{equation}\label{Math25}
    \E_{\Phi_{k_n}}\|r_{k_n}\|_n^2\leq  \E_{\Phi_{k_n-1}}\|r_{k_n-1}\|_n^2-\frac{ \left(\E_{\Phi_{k_n-1}}\max{(\|r_{k_n-1}\|_n^2-\|\mathbb{Y}-h\|_n^2,0})\right)^2}{4\cdot\left(\|h\|_{EL^1}/ \sqrt{c(p)}\right)^2},
\end{equation}
where we use the law of iterated expectation and Jensen's inequality.

If $\E_{\Phi_{k_n-1}} \left( \|r_{k_n-1}\|_n^2-\|\mathbb{Y}-h\|_n^2 \right)\leq 0$, then the inequality $\E_{\Phi_{k_n}} \left( \|r_{k_n}\|_n^2-\|\mathbb{Y}-h\|_n^2 \right)\leq 0$ also holds; and the proof is already completed. This argument is verified because 
$\E_{\Phi_{k_n}|\Phi_{k_n-1}}\|r_{k_n}\|_n^2\leq \|r_{k_n-1}\|_n^2$ almost surely and $\E_{\Phi_{k_n}}(\|r_{k_n}\|_n^2)=\E_{\Phi_{k_n-1}}\E_{\Phi_{k_n}|\Xi_{k_n-1}}\|r_{k_n}\|_n^2$. Therefore, without loss of generality, we can assume that  for each $1\leq j\leq k_n$,  $$
\E_{\Phi_{j-1}} \left( \|r_{j-1}\|_n^2-\|\mathbb{Y}-h\|_n^2 \right)> 0.$$ With similar arguments, \eqref{Math25} implies we have recurrences
\begin{equation}\label{mathematical induction method}
      a_j\leq  a_{j-1}\left(1-\frac{a_{j-1}}{M}\right)
\end{equation}
for each $1 \leq j\leq k_n$, where $a_j:=\E_{\Xi_{j}} \left( \|r_{j}\|_n^2-\|\mathbb{Y}-h\|_n^2 \right)\geq 0$ and $M=4\left(\|h\|_{EL^1}/ \sqrt{c(p)}\right)^2$.
If $a_0\leq M$, it is not difficult to know  $a_{k_n}\leq \frac{M}{k_n+1}$ by  mathematical induction and \eqref{mathematical induction method}. Otherwise $a_1<0$ by \eqref{mathematical induction method}, which implies that $a_{k_n}<0$ for all $k_n\geq 1$ because $\{a_{j}\}_{j=0}^\infty$ is  non-increasing. Therefore, this lemma is always true, regardless of the sign of $a_0$.  \hfill\(\Box\)

\section{Proof of Theorem \ref{Boosting tree consi}}

Recall Lemma \ref{agirov2009estimation} shows  that
\begin{equation}\label{Mainlemmaformula2}
    \begin{aligned}
\E\left(\int{|\hat{m}^1_{k_n,boost}(x)-m(x)|^2d\mu(x)}\right) &\leq 2 \E_{\mathcal{D}_{n},\Xi_{k_n}}\left( \| m^1_{k_n,boost}(X)-\mathbb{Y}\|_n^2-\|m(X)-\mathbb{Y}\|_n^2\right)\\
& \ \ \ +\frac{c\ln^2 n}{n}\cdot\sup_{z_1^n}\ln\left(\mathcal{N}_1(1/(80nt_n),\mathcal{NN}_3^{\beta_n},z_1^n)\right),
\end{aligned}
\end{equation}
where $\mathbb{Y}=(Y_1,\ldots,Y_n)^\top\in\R^n$  and the above $\mathcal{N}_1(\cdot,\cdot,\cdot)$ is related to the  covering number of $\mathcal{NN}_3^{\beta_n}$; see Definition \ref{coveringnumber}. To give an upper bound for the first part on the RHS of \eqref{Mainlemmaformula2}, let
\begin{align*}
  \text{I} &=\|m^1_{k_n,boost}(X)-\mathbb{Y}\|_n^2-\|m(X)-\mathbb{Y}\|_n^2 \\
   &= (\|\mathbb{Y}-m^1_{k_n,boost}\|_{n}^2-\|\mathbb{Y}-h_n\|_{n}^2) + (\|\mathbb{Y}-h\|_{n}^2-\|\mathbb{Y}-m\|_{n}^2) := \text{II} + \text{III},
\end{align*}
where $h= \sum_{j=1}^\infty a_{j}\sigma_0(\theta_{n,j}^\top x+s_{n,j})\in\mathcal{G}$. For $\text{II}$,
by Lemma \ref{Oracle inequality for boosting tree} we have
\begin{equation}\label{Sta1}
 \E_{\Phi_{k_n}|\mathcal{D}_{n}}(\text{II})\leq c(p) \left(\sum_{j=1}^\infty{|a_{j}|}\right)^2\cdot\frac{1}{k_n+1},
\end{equation}
where $\Phi_{k_n}$ is defined in Section \ref{S.8.}. Taking expectation w.r.t. $\mathcal{D}_n$ on both sides of \eqref{Sta1} implies that
\begin{equation}\label{Sta2}
     \E_{\Phi_{k_n}, \mathcal{D}_{n}}(\text{II})\leq c(p) \left(\sum_{j=1}^\infty{|a_{j}|}\right)^2\cdot\frac{1}{k_n+1}.
\end{equation}
Since $\text{III}$ is independent to the random seed $\Phi_{k_n}$, thus we further have
\begin{equation}\label{Sta3}
     \E_{\Phi_{k_n}, \mathcal{D}_{n}}(\text{III})=\E\left((Y-h_n(X))^2- (Y-m(X))^2\right)=\E\left(m(X)-h_n(X)\right)^2.
\end{equation}
The combination of \eqref{Sta2} and \eqref{Sta3} yields
\begin{equation}\label{Sta4}
     \E_{\Phi_{k_n}, \mathcal{D}_{n}}(\text{I})\leq
     c(p) \left(\sum_{j=1}^\infty{|a_{j}|}\right)^2\cdot\frac{1}{k_n+1} + \E\left(m(X)-h(X)\right)^2.
\end{equation}
For the second part on the RHS of \eqref{Mainlemmaformula2},  we can use \eqref{aihkdjbmaBGKOLqqb}. Therefore, it follows from \eqref{Mainlemmaformula2}, \eqref{Sta4} and \eqref{aihkdjbmaBGKOLqqb} that
    \begin{align}
      \E\left(\int{|\hat{m}^1_{k_n,boost}(x)-m(x)|^2d\mu(x)}\right)  &\leq  c(p,t)\frac{\ln^2 n}{n} k_n\ln(k_n) \ln\left(cn \beta^2_n \right) \nonumber \\
      &+c(p) \left(\sum_{j=1}^\infty{|a_{j}|}\right)^2\cdot\frac{1}{k_n+1}+\E\left(m(X)-h(X)\right)^2.\label{Sta5}
    \end{align}
    Next, we finish our arguments by showing the RHS of \eqref{Sta5} goes to $0$ as $n\to\infty$. Let $\varepsilon>0$ be any given number. Since $\sigma_0(v),v\in\R$ is a squashing function, by Lemma  16.1 in \cite{gyorfi2006distribution} there is $h_\varepsilon=\sum_{j=1}^\infty a_{\varepsilon,j}\sigma_0(\theta_{\varepsilon,j}^\top x+s_{\varepsilon,j}) \in\mathcal{G}$ such that
    \begin{equation}\label{dbkjfcjnbNBsVHBJHKMa}
        \E\left(m(X)-h_\varepsilon(X)\right)^2\le \frac{\varepsilon}{2}.
    \end{equation}
   Replace $h$ and $ \sum_{j=1}^\infty{|a_j|}$ in \eqref{Sta5} by $h_\varepsilon$ and $ \sum_{j=1}^\infty{|a_{\varepsilon,j}|}$  respectively.  By the assumptions in Theorem \ref{Boosting tree consi}, there is $N \in\mathbb{Z}_+$ such that
   \begin{equation}\label{f}
    c(p,t)\frac{\ln^2 n}{n} k_n\ln(k_n) \ln\left(c n \beta^2_n \right)   +c(p) \left(\sum_{j=1}^\infty{|a_{\varepsilon,j}|}\right)^2\cdot\frac{1}{k_n+1}\le \frac{\varepsilon}{2}
   \end{equation}
   for all $n\ge N$. Thus, the combination of \eqref{dbkjfcjnbNBsVHBJHKMa} and \eqref{f} guarantees the consistency of $\hat{m}^1_{k_n,boost}(x)$. Finally,  the consistency of $\hat{m}^{ens}_{k_n,boost}(x)$ follows from the Jensen's inequality.  \hfill\(\Box\)

\section{Proof of Theorem \ref{theorem_lowerbound}}\label{sec::last}

Without loss of generality, we assume $\theta_j\in\Theta^p$ since $\sigma_0(\theta_j^\top x+s_j)=\sigma_0(\theta_j^\top x/\|\theta_j\|_2+s_j/\|\theta_j\|_2)$ for any $x\in [0,1]^p$. Because $x$ is restricted in $[0,1]^p$, we can also assume $s_j\in [-M,M]$ for some large $M>0$.  Therefore,  $\mathcal{G}_1$ can be rewritten as
  $$
\mathcal{G}_1:=\left\{  \sum_{j=1}^\infty a_{j}\sigma_0(\theta_j^\top x+s_j): x\in [0,1]^p, \sum_{j=1}^\infty{|a_j|}\le 1, \theta_j\in\Theta^p, s_j\in [-M,M] \right\}.
$$
 The proof of this theorem is based on Theorem 1 in \cite{yang1999information}. For the completeness of this supplement, we list it below. 

 \begin{lemma}[\cite{yang1999information}]
     Consider the regression model
     $$
        Y_i = m(X_i)+ \varepsilon_i,\ \ i = 1,\ldots, n,
    $$
where $X_i\sim U[0,1]^p$ and $\varepsilon_i \sim N(0,1)$ are independent Gaussian noises and $m\in \mathcal{M}$ for some function class $ \mathcal{M}$. Recall that the $L^2$ distance between any functions $f,g$ is defined by $\sqrt{\E(f(X_1)-g(X_1))^2}$.
  Suppose that there exists $\delta, \xi > 0$ such that
  \begin{equation}\label{dshbafkdjbnkjxyqc}
   \ln \mathcal{N}(\xi, \mathcal{M})\le \frac{n\xi^2}{2}, \ \  \ln \mathcal{P}(\delta, \mathcal{M})\ge 2n\xi^2+\ln 2,    
  \end{equation}
  where $\mathcal{N}(\xi, \mathcal{M}) (\mathcal{P}(\delta, \mathcal{M}))$ denotes the cardinality of a minimal $\xi$ ($\delta$)-covering (packing) for the class $\mathcal{M}$ under the $L^2$ distance respectively.  Then we have
  \begin{equation}\label{bhsabddmnA45SBqber}
    \inf_{\hat{m}}\sup_{m(x)\in\mathcal{M}}\E\left(\int{|\hat{m}(x)-m(x)|^2dx }\right)\ge \frac{\delta^2}{8}.
  \end{equation}
 \end{lemma}

 Let $\mathcal{M}=\mathcal{G}_1$ in above lemma. Firstly, by \cite{siegel2022sharp}, we know 
 $$
     \ln \mathcal{N}(\xi, \mathcal{G}_1)\le c\cdot \left(\frac{1}{\xi}\right)^{\frac{2p}{p+1}}
 $$
 for some $c>0$. Then, by \cite{siegel2022sharp}, we have the lower bound about the packing number:
 $$
  \ln \mathcal{P}(\delta, \mathcal{G}_1)\ge  \ln \mathcal{P}(2\delta, \bar{\mathcal{G}}_1)\ge \ln \mathcal{N}(2\delta, \bar{\mathcal{G}}_1)\ge c\cdot \left(\frac{1}{\delta}\right)^{\frac{2p}{p+1}},
 $$
 where $\bar{\mathcal{G}}_1$ denotes the closure of $\mathcal{G}_1$ calculated by using norm of  $L^2$ space $\{f:\E(f^2(X))<\infty\}$. Choosing $\xi=\delta=c\cdot (1/n)^{\frac{p+1}{4p+2}}$ for some $c>0$,  it can be checked that the two equations in \eqref{dshbafkdjbnkjxyqc} are satisfied. Therefore, the result in \eqref{bhsabddmnA45SBqber} gives us the lower bound  as desired. This completes the proof. \hfill\(\Box\)

\

\section{Computational issues}\label{computation}

We have explained the specific parameter settings in Section 6. This section  presents the summary of datasets used in paper; see Table \ref{datasets} in this supplementary and will discuss other details of the calculation. More calculation settings and details can be found in our R-package “\textsf{ODRF}”, while the exact values are used as default values in this package.

Specifically, the depth of all tree methods is determined by their respective stopping criteria of CART or RF, while the number of iterations in our boosting method is determined by the BIC criterion. More precisely, our boosting method with bagging employs a BIC-type criterion to determine the number of iterations. The BIC includes both fit and out-of-bag errors and incorporates penalties related to sample size \( n \), dimensionality \( p \), and the number of iterations to prevent overfitting. Let \( n^{(oob)} \) denote the out-of-bag observations as \( \{(X_b^{(oob)}, y_b^{(oob)}) : b=1, \ldots, n^{(oob)}\} \) and let \( m_{\tau} \) represent the output function of the inner loop of ODBT. Our BIC for the iteration is defined
as follows:
	\begin{align*}
	\text{BIC}(\tau) = &\log\left(\frac{1}{n+n^{(oob)}} \Big(\sum_{i=1}^n (Y_i - m_{\tau}(X_i))^2 + \sum_{b=1}^{n^{(oob)}} (Y_b^{(oob)} - m_{\tau}(X_b^{(oob)}))^2\Big)\right) \\
 &+ \frac{\tau \log(p) \log(n)}{n}.    
	\end{align*}

About the computational time, we have compared the computational time of ODT with CART and other tree-based methods, including Extremely Randomized Trees (ERT), Evolutionary Learning of Globally Optimal Classification and Regression Trees (EVT), and Conditional Inference Trees (CT). Each method was implemented using its respective R package: ERT with the `RLT()' function from the `RLT' package, EVT with the `evtree()' function from the `evtree` package, and CT with the `ctree()' function from the `partykit' package.  All data sets and codes used in the paper are publicly accessible at \url{https://github.com/liuyu-star/ODBT}. The results presented in the paper can be reproduced by running the provided code.

All experimental settings were kept consistent with those in our paper, and the results of 100 repetitions are presented in Tables \ref{Table4} and \ref{Table5}. The computations were conducted in R version 4.2.2 on a Windows 11 operating system with an AMD Ryzen 7 5800H @ 3.2GHz processor, and no multi-threading was used.  As shown in the tables, ODT provides more accurate classification and regression predictions, but indeed at some computational expense. Specifically, in classification tasks, ODT incurs only a small increase in computation time compared to CART, while significantly reducing the misclassification rate. In contrast, other methods either require more computational time or offer lower accuracy, making it challenging to achieve an optimal balance between speed and performance.


\begin{table}[t]
	\centering
	\caption{Dataset summaries for and regression and classification experiments.}\label{datasets}
\begin{minipage}{0.5\textwidth}
\setlength{\tabcolsep}{0.mm}
{
\begin{tabular*}{0.9\textwidth}{@{\extracolsep{\fill}}llrrrll}
    \toprule%
    \multicolumn{5}{@{}c@{}}{Dataset with continuous responses} \\
    \cmidrule(lr){1-5}
    data & Name  & N     & p     & Link \\
    \midrule
    data.1 & Servo & 166   & 4     & B \\
    data.2 & Strike & 624   & 6     & B \\
    data.3 & Auto MPG & 391   & 7     & B \\
    data.4 & Low birth weight & 188   & 9     & B \\
    data.5 & Pharynx & 192   & 12    & B \\
    data.6 & Body fat & 251   & 14    & B \\
    data.7 & Paris housing price & 10000 & 16    & C \\
    data.8 & Parkinsons & 5875  & 19    & A \\
    data.9 & Auto 93 & 81    & 22    & B \\
    data.10 & Auto horsepower & 159   & 24    & B \\
    data.11 & Wave energy & 71998 & 32    & C \\
            & Converters-Adelaide &   \\
    data.12 & Baseball player statistics & 4535  & 74    & C \\
    data.13 & Year prediction MSD & 50000 & 90    & A \\
    data.14 & Residential  & 372   & 103   & A \\
        & building-Sales\\
    data.15 & Residential  & 372   & 103   & A \\
        & building-Cost \\
    data.16 & Geographical  & 1059  & 116   & A \\
             &  original-Latitude \\
    data.17 & Geographical & 13063 & 249   & A \\
                 &  original-Latitude \\
    data.18 & Credit score & 80000 & 259   & C \\
    data.19 & CT slices & 53500 & 380   & A \\
    data.20 & UJIndoor-Longitude & 19937 & 465   & A \\
    \bottomrule
\end{tabular*}
}
\end{minipage}%
\begin{minipage}{0.5\textwidth}
\setlength{\tabcolsep}{0.mm}
{
\begin{tabular*}{0.9\textwidth}{@{\extracolsep{\fill}}llrrrll}
    \toprule%
    \multicolumn{5}{@{}c@{}}{Dataset with binary categorical response (0 and 1)} \\
    \cmidrule(lr){1-5}
    data & Name  & N     & p     & Link \\
    \midrule
    data.21 & MAGIC & 19020 & 10    & B \\
    data.22 & EEG eye state & 14980 & 14    & A \\
    data.23 & Diabetic retinopathy  & 1151  & 19    & A \\
       &  debrecen \\ 
    data.24 & Parkinson multiple  & 1208  & 26    & C \\
      & sound \\
    data.25 & Pistachio & 2148  & 28    & C \\
    data.26 & Breast cancer & 569   & 30    & C \\
    data.27 & Waveform (2) & 5000  & 40    & B \\
    data.28 & QSAR biodegradation & 1055  & 41    & A \\
    data.29 & Spambase & 4601  & 57    & A \\
    data.30 & Mice protein expression & 1047  & 70    & A \\
    data.31 & Ozone level detection & 1847  & 72    & A \\
    data.32 & Company bankruptcy & 6819  & 94    & C \\
    data.33 & Hill valley & 1211  & 100   & B \\
    data.34 & Hill valley noisy & 1211  & 100   & B \\
    data.35 & Musk  & 6598  & 166   & A \\
    data.36 & ECG heartbeat  & 14550 & 186   & C \\
             &  categorization \\
    data.37 & Arrhythmia & 420   & 192   & A \\
    data.38 & Financial indicators  & 986   & 216   & C \\
     & of US stocks\\
    data.39 & Madelon & 2000  & 500   & A \\
    data.40 & Human activity  & 2633  & 561   & A \\
     & recognition\\
    \bottomrule
\end{tabular*}
}
\end{minipage}
 
	\end{table}

\begin{table}[t]
		\centering
		\caption{Regression: average RPE based on 100 random partitions of each data set into training and test sets}\label{Table4}%
		{ \small
					\begin{tabular}{lrrrrrrrrrr}
								\toprule%
								& \multicolumn{5}{c}{relative prediction error}& \multicolumn{5}{c}{Computational time (in seconds)}  \\
								\cmidrule(lr){2-6}\cmidrule(lr){7-11}%
								data & CART	&ERT&	EVT	&CT	&ODT& CART	&ERT&	EVT	&CT	&ODT \\
						\midrule
					data.1 & 0.298  & 0.860  & 0.254  & 0.300  & 0.313  & 0.007  & 0.001  & 0.109  & 0.029  & 0.021  \\
					data.2 & 1.003  & 1.279  & 0.959  & 0.915  & 0.900  & 0.009  & 0.001  & 0.218  & 0.047  & 0.063  \\
					data.3 & 0.213  & 0.302  & 0.222  & 0.192  & 0.177  & 0.008  & 0.001  & 0.329  & 0.073  & 0.049  \\
					data.4 & 0.399  & 0.743  & 0.383  & 0.393  & 0.439  & 0.007  & 0.002  & 0.054  & 0.024  & 0.024  \\
					data.5 & 0.377  & 1.129  & 0.420  & 0.334  & 0.478  & 0.010  & 0.001  & 0.095  & 0.042  & 0.027  \\
					data.6 & 0.061  & 0.373  & 0.067  & 0.049  & 0.063  & 0.010  & 0.001  & 0.232  & 0.090  & 0.038  \\
					data.7 & 0.018  & 0.316  & 0.004  & 0.000  & 0.000  & 0.047  & 0.015  & 4.436  & 1.025  & 0.538  \\
					data.8 & 0.295  & 1.020  & 0.436  & 0.439  & 0.538  & 0.086  & 0.012  & 3.742  & 0.620  & 0.575  \\
					data.9 & 0.620  & 0.929  & 0.685  & 0.602  & 0.595  & 0.009  & 0.002  & 0.047  & 0.044  & 0.014  \\
					data.10 & 0.236  & 0.346  & 0.250  & 0.238  & 0.207  & 0.013  & 0.001  & 0.136  & 0.084  & 0.030  \\
					data.11 & 0.543  & 0.721  & 0.554  & 0.488  & 0.494  & 0.159  & 0.014  & 3.794  & 0.838  & 0.864  \\
					data.12 & 0.039  & 0.168  & 0.015  & 0.008  & 0.003  & 0.182  & 0.027  & 4.242  & 0.952  & 1.498  \\
					data.13 & 0.969  & 1.682  & 0.928  & 0.916  & 1.338  & 0.393  & 0.040  & 2.488  & 0.851  & 1.771  \\
					data.14 & 0.060  & 0.255  & 0.058  & 0.040  & 0.046  & 0.048  & 0.004  & 0.367  & 0.623  & 0.209  \\
					data.15 & 0.110  & 0.214  & 0.138  & 0.094  & 0.090  & 0.048  & 0.004  & 0.346  & 0.614  & 0.194  \\
					data.16 & 1.043  & 1.573  & 0.910  & 0.907  & 1.263  & 0.163  & 0.015  & 0.467  & 0.540  & 0.664  \\
					data.17 & 0.630  & 1.089  & 0.673  & 0.633  & 0.918  & 0.816  & 0.105  & 2.876  & 2.147  & 4.751  \\
					data.18 & 0.274  & 0.297  & 0.268  & 0.234  & 0.232  & 0.746  & 0.092  & 4.487  & 6.541  & 5.494  \\
					data.19 & 0.224  & 0.211  & 0.233  & 0.187  & 0.154  & 1.099  & 0.169  & 5.531  & 13.215  & 8.190  \\
					data.20 & 0.087  & 0.118  & 0.140  & 0.064  & 0.028  & 1.029  & 0.156  & 5.933  & 9.681  & 9.526  \\
						\hline
						Average& 0.375  & 0.681  & 0.380  & 0.351  & 0.414  & 0.244  & 0.033  & 1.997  & 1.904  & 1.727  \\
						\bottomrule
					\end{tabular}%
				}
			\end{table}%

\begin{table}[t]
		\centering
		\caption{Classification: average MR (\%) based on 100 random partitions of each data set into training and test sets}\label{Table5}%
		{\small
					\begin{tabular}{lrrrrrrrrrr}
								\toprule%
								& \multicolumn{5}{c}{Miss-classification rate}& \multicolumn{5}{c}{Computational time (in seconds)}  \\
								\cmidrule(lr){2-6}\cmidrule(lr){7-11}%
								data & CART	&ERT&	EVT	&CT	&ODT& CART	&ERT&	EVT	&CT	&ODT \\
								\midrule
								data.21 & 8.00  & 17.22  & 8.70  & 8.88  & 10.68  & 0.06  & 0.01  & 2.49  & 0.19  & 0.26  \\
								data.22 & 28.90  & 33.07  & 28.05  & 32.41  & 21.79  & 0.08  & 0.01  & 3.60  & 0.25  & 0.33  \\
								data.23 & 36.06  & 39.98  & 35.02  & 36.72  & 29.37  & 0.04  & 0.00  & 0.86  & 0.08  & 0.15  \\
								data.24 & 34.10  & 38.98  & 34.28  & 36.93  & 34.81  & 0.06  & 0.00  & 0.98  & 0.13  & 0.21  \\
								data.25 & 13.76  & 17.36  & 13.85  & 13.85  & 9.92  & 0.09  & 0.00  & 1.42  & 0.27  & 0.17  \\
								data.26 & 8.02  & 9.30  & 7.18  & 6.84  & 4.16  & 0.02  & 0.00  & 0.21  & 0.10  & 0.03  \\
								data.27 & 26.36  & 91.07  & 25.08  & 24.65  & 17.78  & 0.11  & 0.01  & 3.34  & 0.65  & 0.35  \\
								data.28 & 18.15  & 20.94  & 18.03  & 19.75  & 17.15  & 0.06  & 0.00  & 0.83  & 0.28  & 0.14  \\
								data.29 & 10.47  & 11.11  & 9.79  & 10.45  & 8.06  & 0.20  & 0.01  & 2.90  & 0.71  & 0.46  \\
								data.30 & 13.86  & 18.24  & 16.48  & 14.17  & 6.01  & 0.11  & 0.00  & 0.91  & 0.57  & 0.12  \\
								data.31 & 7.78  & 9.58  & 6.68  & 7.43  & 8.29  & 0.16  & 0.00  & 0.55  & 0.34  & 0.20  \\
								data.32 & 3.74  & 4.72  & 3.28  & 3.43  & 4.14  & 0.45  & 0.03  & 1.41  & 0.56  & 0.45  \\
								data.33 & 47.92  & 46.31  & 48.45  & 49.50  & 0.04  & 0.30  & 0.01  & 1.17  & 0.11  & 0.07  \\
								data.34 & 47.55  & 75.91  & 48.34  & 50.12  & 18.10  & 0.38  & 0.01  & 1.02  & 0.10  & 0.29  \\
								data.35 & 6.74  & 9.78  & 10.37  & 7.61  & 7.48  & 0.71  & 0.04  & 1.98  & 1.78  & 0.87  \\
								data.36 & 22.67  & 24.27  & 35.25  & 24.77  & 19.04  & 0.73  & 0.06  & 3.28  & 2.06  & 1.29  \\
								data.37 & 25.00  & 36.66  & 23.09  & 31.81  & 29.75  & 0.12  & 0.01  & 0.39  & 0.45  & 0.17  \\
								data.38 & 0.04  & 16.99  & 0.25  & 0.13  & 1.27  & 0.17  & 0.01  & 0.45  & 0.36  & 0.24  \\
								data.39 & 22.86  & 44.36  & 34.09  & 32.78  & 44.66  & 1.40  & 0.06  & 2.10  & 3.27  & 1.79  \\
								data.40 & 0.00  & 0.34  & 0.17  & 0.07  & 0.08  & 1.10  & 0.05  & 3.99  & 1.54  & 1.80  \\
								\hline
								Average &  19.10  & 28.31  & 20.32  & 20.62  & 14.63  & 0.32  & 0.02  & 1.69  & 0.69  & 0.47  \\
								\bottomrule
							\end{tabular}%
						}
					\end{table}%


\bibliographystyle{imsart-nameyear}
\bibliography{ODTref2}

\end{document}